\begin{document}
\numberwithin{equation}{section} 

\def\1#1{\overline{#1}}
\def\2#1{\widetilde{#1}}
\def\3#1{\widehat{#1}}
\def\4#1{\mathbb{#1}}
\def\5#1{\frak{#1}}
\def\6#1{{\mathcal{#1}}}

\def\C{{\4C}}
\def\R{{\4R}}
\def\N{{\4N}}
\def\Z{{\4Z}}

\title[proper holomorphic maps]{Proper holomorphic maps between bounded symmetric domains with small rank differences}
\author[S.-Y. Kim, N. Mok, A. Seo ]{Sung-Yeon Kim, Ngaiming Mok, Aeryeong Seo}
\address{S.-Y. Kim: Center for Complex Geometry, Institute for Basic Science, 
55, Expo-ro, Yuseong-gu, Daejeon, Korea, 34126 }
\email{sykim8787@ibs.re.kr}
\address{N. Mok: Department of Mathematics, The University of Hong Kong, {Pokfulam} Road, Hong Kong }
\email{nmok@hku.hk}
\address{A. Seo: Department of Mathematics,
Kyungpook National University,
Daegu 41566, Republic of Korea}%
\email{aeryeong.seo@knu.ac.kr}
\subjclass[2010]{32H35, 32M15, 14M15, 32V40 }
\keywords{Proper holomorphic map, Bounded symmetric domain, Moduli space of subgrassmannians}
\maketitle


\def\Label#1{\label{#1}{\bf (#1)}~}


\def\cn{{\C^n}}
\def\cnn{{\C^{n'}}}
\def\ocn{\2{\C^n}}
\def\ocnn{\2{\C^{n'}}}


\def\dist{{\rm dist}}
\def\const{{\rm const}}
\def\rk{{\rm rank\,}}
\def\id{{\sf id}}
\def\aut{{\sf aut}}
\def\Aut{\textup{Aut}}
\def\CR{{\rm CR}}
\def\GL{{\sf GL}}
\def\Re{{\sf Re}\,}
\def\Im{{\sf Im}\,}
\def\span{\text{\rm span}}
\def\mult{\text{\rm mult\,}}
\def\reg{\text{\rm reg\,}}
\def\ord{\text{\rm ord\,}}
\def\hot{\text{\rm HOT\,}}

\def\codim{{\rm codim}}
\def\crd{\dim_{{\rm CR}}}
\def\crc{{\rm codim_{CR}}}

\def\eps{\varepsilon}
\def\d{\partial}
\def\a{\alpha}
\def\b{\beta}
\def\g{\gamma}
\def\G{\Gamma}
\def\D{\Delta}
\def\Om{\Omega}
\def\k{\kappa}
\def\l{\lambda}
\def\L{\Lambda}
\def\z{{\bar z}}
\def\w{{\bar w}}
\def\Z{{\1Z}}
\def\t{\tau}
\def\th{\theta}

\emergencystretch15pt
\frenchspacing

\newtheorem{Thm}{Theorem}[section]
\newtheorem{Cor}[Thm]{Corollary}
\newtheorem{Pro}[Thm]{Proposition}
\newtheorem{Lem}[Thm]{Lemma}

\theoremstyle{definition}\newtheorem{Def}[Thm]{Definition}

\theoremstyle{remark}
\newtheorem{Rem}[Thm]{Remark}
\newtheorem{Exa}[Thm]{Example}
\newtheorem{Exs}[Thm]{Examples}

\def\bl{\begin{Lem}}
\def\el{\end{Lem}}
\def\bp{\begin{Pro}}
\def\ep{\end{Pro}}
\def\bt{\begin{Thm}}
\def\et{\end{Thm}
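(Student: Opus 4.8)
The plan is to prove that, after composing with automorphisms of the source and target, the proper holomorphic map $F$ must be a standard (totally geodesic) embedding, by transferring the problem to the boundary and then to a moduli space of sub-Grassmannians. First I would arrange boundary regularity: for proper holomorphic maps between bounded symmetric domains one has algebraicity and extension across generic boundary points (of Mok--Tsai type), so that $F$ restricts to a CR map between the relevant boundary orbits. The boundary $\partial\Om$ of a bounded symmetric domain is stratified into orbits $\partial_r\Om$ indexed by rank $r$ with $0\le r\le \rk\Om-1$, and properness forces $F$ to respect this stratification. The small rank-difference hypothesis is exactly what guarantees that the top boundary orbit of $\Om$ is carried into the top (or a controlled) boundary orbit of $\Om'$, which is the geometric foothold for everything that follows.

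Second, I would bring in the geometry of minimal rational curves and characteristic vectors. Each boundary face of $\Om$ is swept out by maximal polydisks, and along it the variety of minimal rational tangents / space of characteristic directions has a dimension governed by $\rk\Om$. The crucial linear-algebraic input is that when $\rk\Om'-\rk\Om$ lies below the stated threshold there is simply not enough room in the target for a nondegenerate deformation: a CR map between the boundary orbits is then forced to send characteristic vectors to characteristic vectors and to respect the associated second-fundamental-form structure. I expect this step to be the main obstacle, since it is here that one must convert the hypothesis on rank differences into a precise estimate ruling out non-standard configurations of characteristic subspaces; this will require a careful representation-theoretic count of the possible images of maximal characteristic subspaces together with the elimination of degenerate cases.

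Third --- and this is where the moduli space of sub-Grassmannians enters --- I would encode the remaining ambiguity of $F$ as a holomorphic map from $\Om$ (or a suitable boundary piece) into the moduli space $\mathcal{M}$ parametrizing the maximal characteristic sub-Grassmannians of $\Om'$. Rigidity then reduces to proving that this moduli map is constant. I would establish constancy in two stages: a dimension/degree count, forced by the rank inequality, showing the moduli map cannot be dominant; and a propagation argument showing it is constant along each minimal disk, hence constant on all of $\Om$ because any two points are joined by chains of minimal disks and $\Om$ is connected.

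Finally, once the moduli map is constant, $F$ preserves a fixed sub-Grassmannian fibration, so in suitable Harish-Chandra coordinates it splits in the expected block form and, after composing with elements of $\Aut(\Om)$ and $\Aut(\Om')$, assumes the asserted standard normal form. The delicate point throughout remains the passage in the second step from ``$\rk\Om'-\rk\Om$ small'' to the rigidity of characteristic directions; all the boundary-stratification and moduli-theoretic machinery is in service of isolating and then resolving that single representation-theoretic estimate.
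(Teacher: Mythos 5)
Your overall architecture (boundary strata, characteristic subspaces, a moduli map between moduli spaces of subgrassmannians) points in the right direction, but the pivotal claim in your third step is wrong in a way that would sink the proof. You propose to show that the moduli map is \emph{constant}. In the actual argument the induced moduli map $f_r^\flat\colon D_r(X)\to F_{i_r}(X')$ must be shown to be a \emph{standard (trivial) embedding}, and constancy is precisely the degenerate alternative that gets ruled out: if $pr'\circ f_r^\sharp$ were constant, then $f(\Omega)$ would be trapped in a fixed characteristic subspace meeting a boundary orbit $S_k(X')$, contradicting properness (this is exactly how the paper proves $i_{r-1}<i_r$ in Lemma~\ref{increasing}). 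The conclusion of the theorem itself forces non-constancy, since the factor $F_1$ is a standard embedding and hence moves characteristic subdomains nontrivially. Relatedly, your account of where the hypothesis $q'<2q-1$ enters is off target: it is not a "no room for nondegenerate deformation" estimate on second fundamental forms, but a pigeonhole on the strictly increasing index sequence $i_1<i_2<\cdots<i_{q-1}\le q'-1<2q-2$, which guarantees the existence of some $r$ with $i_r=i_{r-1}+1$. Only at such a level does the moduli map send the codimension-one subgrassmannians $Z_\tau$, $Q_\mu$ to their counterparts in a way that feeds into the rigidity result (Proposition~\ref{H respects}), which in turn rests on CR rigidity of the orbits $\Sigma_r$ and, in the type~III-to-type~I case, on the rigidity of the non-subdiagram pair $(SGr(q,\mathbb C^{2n}),Gr(q,\mathbb C^{2n}))$ via the Recognition Theorem of Hwang--Li; none of this is replaceable by a dimension count.

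A second, independent gap is your first step. You assume boundary regularity ("extension across generic boundary points") so that $f$ restricts to a CR map of boundary orbits. No such extension is available under the hypotheses, and removing exactly that assumption (present in Kim--Zaitsev) is one of the stated points of the theorem. The paper never extends $f$ to the boundary: the moduli maps are defined intrinsically by intersecting all characteristic subspaces of $X'$ containing $f(\Omega\cap X_\sigma)$, shown to be meromorphic by a jet-space argument, extended rationally by the Hartogs/Oka mechanism of Mok--Tsai on the moduli space $\mathfrak D_r\subset\mathfrak C_r$, and only then compared with boundary orbits via radial limits along polydisk factors (Lemma~\ref{Z-tau-fix}). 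Your plan as written would either reintroduce an unavailable hypothesis or leave the CR map between boundary orbits undefined.
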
}
\def\bc{\begin{Cor}}
\def\ec{\end{Cor}}
\def\bd{\begin{Def}}
\def\ed{\end{Def}}
\def\br{\begin{Rem}}
\def\er{\end{Rem}}
\def\be{\begin{Exa}}
\def\ee{\end{Exa}}
\def\bpf{\begin{proof}}
\def\epf{\end{proof}}
\def\ben{\begin{enumerate}}
\def\een{\end{enumerate}}
\def\beq{\begin{equation}}
\def\eeq{\end{equation}}

\begin{abstract}
In this paper we study the rigidity of proper holomorphic maps $f\colon \Omega\to\Omega'$ between irreducible bounded symmetric domains $\Omega$ and $\Omega'$ with small rank differences: $2\leq \text{rank}(\Omega')< 2\,\text{rank}(\Omega)-1$. More precisely, if either $\Omega$ and $\Omega'$ { of the} same type or $\Omega$ is of type~III and $\Omega'$ is of type~I, then up to automorphisms, $f$ is of the form $f=\imath\circ F$, where $F = F_1\times F_2\colon \Omega\to \Omega_1'\times \Omega_2'$. Here  $\Omega_1'$, $\Omega_2'$ are bounded symmetric domains, the map $F_1\colon \Omega \to \Omega_1'$ is a standard embedding, $F_2: \Omega \to \Omega_2'$, and $\imath\colon \Omega'_1\times \Omega'_2 \to \Omega'$ is a totally geodesic holomorphic isometric embedding. Moreover we show that, under the rank condition above, there exists no proper holomorphic map $f: \Omega \to \Omega'$ if $\Omega$ is of type~I and $\Omega'$ is of type~III, 
or $\Omega$ is of type~II and $\Omega'$ is either of type~I or III.  By considering boundary values of proper holomorphic maps on maximal boundary components of $\Omega$, we construct rational maps between moduli spaces of subgrassmannians of compact duals of $\Omega$ and $\Omega'$, and induced CR maps between CR hypersurfaces of mixed signature, thereby forcing the moduli map to satisfy strong local differential-geometric constraints (or that such moduli maps do not exist), and complete the proofs from rigidity results on geometric substructures modeled on certain admissible pairs of rational homogeneous spaces of Picard number 1. 
\end{abstract}

\section{Introduction}

In this paper, we are concerned with the rigidity of proper holomorphic maps between irreducible bounded symmetric domains when differences between the ranks of the domains are small.

A map between topological spaces is said to be {\it proper} if the {pre-images} of compact subsets are compact. 
If the spaces are bounded domains in Euclidean spaces and the map extends continuously to the boundary, the properness of the map is equivalent to the boundary being mapped to the boundary. Hence if the domains have special boundary structures, the map {is expected to have} a certain rigidity. 
In the case of bounded symmetric domains in their standard realizations, which are one of the most studied geometric objects since Cartan introduced them in his celebrated dissertation, the structure of their boundaries was extensively studied by Wolf (\cite{W69, W72}).

The study of rigidity of proper holomorphic maps between bounded symmetric domains started with Poincar\'e (\cite{P07}), who discovered that any biholomorphic map between two connected open pieces of the the unit sphere in $\mathbb C^2$ is a restriction of (the extension to $\overline{\mathbb B^2}$ of) an automorphism of the 2-dimensional unit ball $\mathbb B^2$. Later, Alexander \cite{A74} and Henkin-Tumanov \cite{TK82} generalized his result to higher dimensional unit balls and higher rank { irreducible} bounded symmetric domains respectively.
For unit balls of different dimensions, proper holomorphic maps have been studied thoroughly by many mathematicians: Cima--Suffridge \cite{CS90}, Faran \cite{F86}, Forstneric \cite{F86, F89}, Globevnik \cite{G87}, Huang \cite{H99, H03}, Huang--Ji \cite{HJ01}, Huang--Ji--Xu \cite{HJX06}, Stens\o nes \cite{S96}, D'Angelo \cite{D88a, D88b, D91, D03}, D'Angelo--Kos--Riehl (\cite{DKR03}) and D'Angelo--Lebl \cite{DL09, DL16}.

In the case of bounded symmetric domains, Tsai \cite{Ts93} showed that if $f\colon \Omega\to\Omega'$ is a proper holomorphic map between bounded symmetric domains $\Omega$ and  $\Omega'$ such that $\Omega$ is irreducible and $\text{rank}(\Omega)\geq \text{rank}(\Omega'){\geq 2}$, then $\text{rank}(\Omega) = \text{rank}(\Omega')$ and $f$ is a totally geodesic isometric embedding, resolving in the affirmative a conjecture of Mok \cite[end of Chapter 6]{M89}. The proofs in Tsai (\cite{Ts93} are based on the method of Mok-Tsai \cite{MT92} on taking radial limits on $\Delta \times \Omega'$, where $\Omega'$ is a maximal characteristic subdomain of $\Omega$, in the disk factor $\Delta$ to yield boundary maps defined on maximal boundary faces, and on the idea of Hermitian metric rigidity of \cite{M87} \cite{M89}. For proper holomorphic maps with $\text{rank} (\Omega)< \text{rank}(\Omega')$ we refer the readers to  Chan \cite{C20, C21}, Faran \cite{F86}, Henkin-Novikov \cite{HN84}, Kim-Zaitsev \cite{KZ13, KZ15}, Mok \cite{M08c}, Mok-Ng-Tu \cite{MNT10}, Ng \cite{N13, N15a, N15b}, Seo \cite{S15, S16, S18} and Tu \cite{Tu02a, Tu02b}.
In particular, in \cite{KZ15}, Kim-Zaitsev showed that under the assumption that $p \ge q \ge 2, p' < 2p-1, q' < p$, any proper holomorphic map $f\colon D^I_{p,q}\to D^I_{p',q'}$ which extends smoothly to a neighborhood of a smooth boundary point must necessarily be of the form \begin{equation}\label{diagonal}
z\mapsto \left(\begin{array}{cc}
z&0\\
0&h(z)\
\end{array}\right),
\end{equation}
where $h(z)$ is an arbitrary holomorphic matrix-valued { map} satisfying 
$$
I_{q'-q} - h(z)^*h(z)>0  \text{ for any } z\in D_{p,q}^I.
$$
Here, $D^I_{p,q}$ denotes a bounded symmetric domain of type I (see \eqref{typeI}). 
Recently Chan \cite{C21} generalized their result to type~I domains by removing the smoothness assumption on the map.
Our first goal is to generalize the results of Kim--Zaitsev and Chan to cases in which $\Omega$ and $\Omega'$ are of the same type or $\Omega$ is of type III and $\Omega'$ is of type~I without requiring the existence of a smooth extension to the boundary. 

{For each Hermitian symmetric space of the compact type, there exist special subspaces which are called {\it characteristic subspaces}. They are defined using Lie { algebras} in \cite[Definition 1.4.2]{MT92}, and we also provide their detailed description in Section~\ref{Hermitian symmetric spaces}. }
\bd
Let $X$ and $X'$ be Hermitian symmetric spaces of {the} compact type.
A holomorphic map $f:X\to X'$ is called a {\em standard embedding} if there exists a characteristic subspace $X''\subset X'$ with ${\rm rank}(X'')={\rm  rank}(X)$ such that $f(X)\subset X''$ and $f:X\to X''$ is a totally geodesic isometric embedding with respect to (any choice of) the canonical K\"{a}hler-Einstein metric {of $X''$ up to normalizing constants}.
For {a nonempty connected} open set $U\subset X$, a holomorphic map $f\colon U \to X'$ is called a {\em standard embedding} if $f$ extends to $X$ as a standard embedding. 
\ed
{ It is worth mentioning that the canonical K\"ahler-Einstein metrics on $X''$ are induced from a K\"ahler-Einstein metric on $X'$.
}
\begin{Thm}\label{main}
Let $\Omega$ and $\Omega'$ be irreducible bounded symmetric domains { of} rank $q$ and $q'$, respectively.
Suppose
$$2\leq  q'<2q-1.$$
Suppose further that either
$(1)$ $\Omega$ and $\Omega'$ are of the same type or 
$(2)$ $\Omega$ is of type~III and $\Omega'$ is of type~I. 
Then, up to automorphisms of $\Omega$ and $\Omega'$, every proper holomorphic map $f\colon \Omega\to \Omega'$ is of the form
$f = \imath\circ F$, where
$$
	F=F_1\times F_2\colon \Omega\to \Omega_1'\times \Omega_2', 
$$
$\Omega_1'$ and $\Omega_2'$ are bounded symmetric domains, $F_1 \colon \Omega\to \Omega_1'$ is a standard embedding, {$F_2: \Omega\to\Omega_2'$ is a holomorphic mapping}, and $\imath: \Omega_1' \times \Omega'_2 \hookrightarrow \Omega'$ is a holomorphic totally geodesic embedding of a  bounded symmetric domain $\Omega_1' \times \Omega_2'$ into $\Omega'$ with respect to canonical K\"ahler-{Einstein} metrics. {Here, $\Omega_2
'$ is allowed to be a point.} As a consequence, every proper holomorphic map $f: \Omega \to \Omega'$, $f = \imath\circ  F$, is a holomorphic totally geodesic isometric embedding with respect to Kobayashi metrics.
\end{Thm}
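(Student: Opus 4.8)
The plan is to prove the structural statement (the factorization $f=\imath\circ(F_1\times F_2)$) first, and then read off the Kobayashi-isometry conclusion as a formal consequence. Throughout, let $X\supset\Omega$ and $X'\supset\Omega'$ denote the compact duals, which are irreducible Hermitian symmetric spaces of the compact type, hence rational homogeneous of Picard number $1$; on each we fix the variety of minimal rational tangents together with the associated system of characteristic subspaces in the sense used to define standard embeddings above. Following Wolf's stratification of $\d\Omega$, the maximal boundary components $F$ are themselves bounded symmetric domains of rank $q-1$, and near such an $F$ the Siegel-domain (Piatetski--Shapiro) realization of $\Omega$ adapted to $F$ exhibits a maximal characteristic subdomain $D$ of rank $q-1$ together with a transverse holomorphic disk $\Delta$. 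The first goal is to attach to the proper map $f$, with no hypothesis of boundary regularity, a canonical boundary datum along a fixed $F$.

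Following the radial-limit method of Mok--Tsai \cite{MT92} and Tsai \cite{Ts93}, I would restrict $f$ to the transverse disks $\Delta$ through points of $D$ and take admissible limits in the disk parameter. Properness forces these limits to land in $\d\Omega'$ for almost every boundary direction, and a continuity/monodromy argument over $F$ organizes them into a boundary map. The decisive step is to reinterpret this boundary map as a \emph{rational map between the moduli spaces of subgrassmannians} parametrizing the characteristic subspaces of $X$ and $X'$: because the characteristic subspaces are parabolic orbits, hence projective-algebraic, and because boundary limits of minimal disks remain minimal, a matrix Schwarz-reflection (Fatou-type) argument across the relevant boundary orbit yields algebraicity and thus a genuine rational moduli map. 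Restricting $f$ to the appropriate boundary orbit simultaneously produces a CR map of Levi-nondegenerate real hypersurfaces which are hyperquadrics of \emph{mixed} signature, the signature being dictated by the rank difference $q'-q$, exactly as in the setting of Kim--Zaitsev \cite{KZ15}.

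The core of the proof is the rigidity of the resulting CR/moduli map, and this is precisely where the inequality $2\le q'<2q-1$ is used: it places the problem in the ``gap'' range in which a CR map between hyperquadrics of the pertinent mixed signatures --- equivalently, a map of geometric substructures modeled on the admissible pair $(X,X')$ --- must be totally geodesic onto a characteristic subspace, that is, a standard embedding, in direct analogy with Huang's gap theorem for maps between spheres. Invoking the substructure-rigidity results quoted in the abstract, I would conclude that the moduli map carries characteristic subspaces of $X$ to characteristic subspaces of $X'$ and is fiberwise a standard embedding. The type-by-type determination of which pairs $(X,X')$ admit such a substructure map is what produces the \emph{non-existence} assertions (no proper $f$ for $\Omega$ of type~I into $\Omega'$ of type~III, nor $\Omega$ of type~II into type~I or~III): for those pairs the second fundamental form of the variety of minimal rational tangents obstructs the existence of the required substructure map in the stated signature range.

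I expect the principal obstacle to be exactly the construction and algebraicity of the boundary moduli map in the absence of smooth boundary extension, which forces one to control the degeneration of the Bergman/Kobayashi metric along $F$ and the behavior of the minimal-disk structure in the limit; this is the technical point that the smoothness hypothesis trivializes in \cite{KZ15} and that must be circumvented here. Granting the substructure rigidity, the reconstruction of $f$ is comparatively formal: the preserved characteristic substructure integrates to a holomorphic image $\imath(\Omega_1'\times\Omega_2')\subset\Omega'$ that is totally geodesic for the canonical K\"ahler--Einstein metric, the component of $f$ along the characteristic factor is forced to be the standard embedding $F_1\colon\Omega\to\Omega_1'$, and the transverse component defines the free holomorphic map $F_2\colon\Omega\to\Omega_2'$ (allowed to be constant). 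Finally, the Kobayashi-isometry conclusion is purely formal: since $\imath$ is a Kobayashi isometry and the Kobayashi metric of a product is the maximum of the factor metrics, $\kappa_{\Omega'}(df(v))=\max\{\kappa_{\Omega_1'}(dF_1v),\,\kappa_{\Omega_2'}(dF_2v)\}$; as the standard embedding $F_1$ is Kobayashi-isometric while $F_2$ is only distance-decreasing, the maximum equals $\kappa_\Omega(v)$, whence $f$ is a holomorphic totally geodesic isometric embedding for the Kobayashi metrics.
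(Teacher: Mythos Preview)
Your proposal captures the broad architecture---boundary limits \`a la Mok--Tsai, a moduli map on characteristic subspaces, CR rigidity on a mixed-signature hyperquadric, and a final decomposition---but it is missing two specific mechanisms that drive the paper's proof and without which the argument does not close.

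First, the role of the inequality $q'<2q-1$ is not a ``gap'' condition for a single CR map in the Huang sense. The paper constructs, for \emph{each} $1\le r\le q-1$, a moduli map $f^\sharp_r:\mathcal D_r(X)\dashrightarrow\mathcal F_{i_r}(X')$ by taking, for a characteristic subdomain $\Omega_\sigma$ of rank $r$, the \emph{smallest} subgrassmannian of $X'$ containing $f(\Omega_\sigma)$; the integer $i_r$ records the rank of the target subgrassmannian. A separate argument (Lemma~\ref{increasing}) shows $i_{r-1}<i_r$ strictly for every $r$. Since $i_{q-1}\le q'-1<2q-2$, pigeonhole forces some $r$ with $i_r=i_{r-1}+1$; it is \emph{only} for such an $r$ that the projected map $f^\flat_r$ on $D_r(X)$ can be shown to respect subgrassmannians and to send $\Sigma_r(X)$ into $\Sigma_{i_r}(X')$, after which Proposition~\ref{H respects} applies. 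Your sketch collapses this hierarchy to a single boundary moduli map and a single CR rigidity step, and so gives no account of why the bound $2q-1$ (rather than, say, $2q$ or $q+{\rm const}$) is the threshold.

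Second, in case~(2) (type~III source, type~I target) the admissible pair at the moduli level is $(SGr(q-r,\mathbb C^{2n}),\,Gr(q-r,\mathbb C^{2n}))$, which is \emph{not of subdiagram type}. The substructure-rigidity results you invoke (Hong--Mok, Mok--Zhang) cover subdiagram pairs; for this case the paper needs an additional argument (Section~\ref{Rigidity of the pair}) that relies on the Thickening Lemma and on Hwang--Li's solution of the Recognition Problem for symplectic Grassmannians to upgrade a VMRT-respecting germ to a global holomorphic immersion, and then a deformation/volume argument to pin it to the standard embedding. Your proposal does not flag this difficulty, and the generic ``substructure rigidity'' citation will not handle it.

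Two smaller points: the rational extension of the moduli maps in the paper comes from a Hartogs-type argument on the associated characteristic bundles (Lemma~\ref{rational extension}), not from a Schwarz-reflection/Fatou argument; and the Kobayashi-isometry conclusion in the paper is obtained by quoting \cite{M22} for the total geodesy of $\imath$, together with the fact that $F_1$ is already a Kobayashi isometry---your max-formula computation gives the isometry of $f$ but not, by itself, the totally-geodesic statement in the sense of the Remark following Lemma~\ref{decomp-f}.
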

We remark that in the case of type~I domains, our result (which supersedes \cite{C21}) is optimal.  In fact, when $q' = 2q-1$ there exists by Seo \cite{S15} a proper holomorphic map called a generalized Whitney map from $D_{p,q}^I$ to $D^I_{2p-1,2q-1}$ which is not equivalent to \eqref{diagonal}.  Note also that for $\Omega$ and $\Omega'$ of type~IV, both bounded symmetric domains are of rank 2 and rigidity follows from \cite{Ts93}. In the case of exceptional domains $D^{V}$ and $D^{VI}$ the theorem concerns only proper holomorphic self-maps which are again necessarily automorphisms by \cite{Ts93} (or already from the method of \cite{TK82}).

\bigskip

\bt\label{nonexistence}
There exists no proper holomorphic map from $\Omega$ to $\Omega'$, if one of the following holds:
\ben
\item $\Omega=D^I_{p,q}$ with {$q\leq p$
}, $\Omega'=D^{III}_{q'}$ and $ q'<2q-1.$\\
\item $\Omega=D^{II}_n$,  $\Omega'=D^I_{p',q'}$ {with $q'\leq p'$} or $D^{III}_{q'}$ and 
$ 2\leq q'<2[n/2]-1.$\\
\een
\et

The basic strategy for the proofs of Theorem~\ref{main} and Theorem~\ref{nonexistence} is to generalize a strategy used in the works of Mok-Tsai \cite{MT92} and Tsai \cite{Ts93} which consists of two main steps. In the first step, it was shown that any proper holomorphic map between bounded symmetric domains maps boundary components into boundary components. This result was then used in the second step under the assumption that the rank of the target domain is smaller than or equal to that of the source domain.  Under the latter assumption, a moduli map was constructed from the moduli space of maximal characteristic symmetric {subdomains} to that of characteristic symmetric subdomains of a fixed rank in the target domain, and the moduli map was proven to admit a rational extension between moduli spaces of characteristic symmetric subspaces.

If we assume that the difference between the rank of the target domain $q'$ { and} that of the source domain $q$ is positive, then for each rank $1\leq r<q$ we need to construct a moduli map $f^\flat_r\colon D_r(\Omega)\to {F}_{i_r}(\Omega')$ between the moduli spaces of subgrassmannians and show that this map also preserves the subgrassmannians $Z_\tau^r$ and  $Q_{\mu}^r$ (Lemma~\ref{Z-tau}, Lemma~\ref{Q-mu}).
It is worth pointing out that  there exists a one-to-one correspondence $r\mapsto i_r$ between the indices of the moduli spaces of the source and target domains (Lemma~\ref{increasing}), so that there exists $r$ such that $i_r=i_{r-1}+1$ {in the case of type-I and type-III Grassmannians, and $i_r=i_{r-1}+2$ in the case of type-II Grassmannians, and our rank condition is necessary to guarantee that $r$ exists.  The existence of $r$ is crucial to establish the fact that some moduli map associated with the proper holomorphic map $f: \Omega \to \Omega'$ is a trivial embedding, from which the form of $f$ as described in Theorem~\ref{main} can be recovered.}

{ The moduli map $f_r^\flat$ is holomorphic if $\Omega$ is of type II or type III for all $r$. On the other hand, if $\Omega$ is of type I, then $f^\flat_r$ for the case when $i_r=i_{r-1}+1$ is either holomorphic or anti-holomorphic. For instance, if $\Omega=\Omega'=D_{p,p}^I$, then the moduli map for { the} identity map is holomorphic while the moduli map of transpose map defined by 
$$
Z\in D_{p,p}^I \to Z^T\in D_{p,p}^I
$$
is anti-holomorphic. Here, $Z^T$ denotes the transpose matrix of $Z$. If a moduli map happens to be anti-holomorphic, we use { the} conjugate complex structure of the source moduli space. 
  }

After implementing the aforementioned strategy, for the completion of our proofs we will make use of rigidity phenomena for CR embeddings (as in \cite{K21}) in an essential way applied to certain CR hypersurfaces in moduli spaces of subgrassmannians, and rigidity results concerning geometric structures and substructures. Our lines of argumentation concord with the perspective put forth in Mok \cite{M16} of applying the theory of geometric structures and substructures modeled on varieties of minimal rational tangents to the study of proper holomorphic maps between bounded symmetric domains, and, in the special case of proper holomorphic maps from type~III domains to type~I domains, a novel element in our proof is the establishment of the rigidity phenomenon for admissible pairs of rational homogeneous manifolds not of the sub-diagram type as initiated in \cite{M19}. In the latter case our proof relies on the solution of the Recognition Problem for symplectic Grassmannians of Hwang-Li \cite{HL21}. { Both the aforementioned rigidity phenomenon and the Recognition Problem will be formulated in the framework of the geometric theory of uniruled projective manifolds $X$ equipped with minimal rational components $\mathcal K$, and we will need the basics of the theory as is given in Hwang-Mok \cite{HM99} and Mok \cite{M08b}, especially the notion of the variety of minimal rational tangents (VMRT) at a general point of $(X,\mathcal K)$ first defined in Hwang-Mok \cite{HM98}, and of the theory of sub-VMRT structures as given in Mok \cite{M16} and Mok-Zhang \cite{MZ19}.}

Our main technical result is presented in Section~\ref{Rigidity of the pair} and it deals with the rigidity of holomorphic maps which respect subgrassmannians. 
\begin{Def}
{Let $U \subset D_r(X)$ be a nonempty connected open subset.  A holomorphic map $H\colon U\to D_{r'}(X')$ is said to \emph{respect subgrassmannians} if for each $\tau \in {D}_r(X)$ and each connected component $U_\tau^\alpha$ of $U \cap Z_\tau$, $\alpha \in A$, there exists $\tau'(\alpha) \in D_{r'}(X') $ such that
\begin{enumerate}
\item
 $H(U_{\tau}^\alpha) \subset Z_{\tau'(\alpha)}$ and 
\item $H|_{U_\tau^\alpha}$ extends to a standard embedding from $Z_{\tau}$ to $Z_{\tau'(\alpha)}$.
\end{enumerate}}
\end{Def}
{Here, for the definition of $D_r(X)$ and $Z_\tau$, see \eqref{css1}, \eqref{css2}, \eqref{css3}, Definition~\ref{mathcal ZQ}, and Definition~\ref{DZQ}.}

Under the {assumptions} of Theorem~\ref{main} and the {additional condition} that $H$ maps a CR submanifold $\Sigma_r(\Omega)$ to $\Sigma_{i_r}(\Omega')$, Proposition~\ref{H respects} says that the map is a trivial embedding. 
Here $\Sigma_r(\Omega)$ and $\Sigma_{i_r}(\Omega')$ are canonically defined CR submanifolds in $D_r(X)$ and $D_{i_r}(X')$ respectively.
This generalizes a result of the first author \cite{K21} (cf. \cite{N12}) on the rigidity of CR embeddings between $SU(\ell, m)$-orbits in the Grassmannian of $q$-planes in $\mathbb C^{p+q}$ where $m=p+q-\ell$.

The proof of Proposition~\ref{H respects} will be given in several steps. First, we will show that the $1$-jet of $H$ coincides with { that of} a trivial embedding and that $H$ maps connected open subsets of projective lines { into} projective lines (Lemma~\ref{one jet of H}). We remark that if $X$ is of type~I or type~II, then for any projective line $L\subset D_r(X)$, there exists a subgrassmannian $Z_{\tau}$ such that $L\subset Z_{\tau}$. Since $H$ respects subgrassmannians, $H$ sends (open subsets of) projective lines into projective lines. 
Type III domains require special attention (Lemma~\ref{hyperquadric}). 
If the map is defined between domains of the same type, in view of {Theorem~1.1 and Proposition~3.4 of \cite{HoM10}} and Lemma~\ref{exist_subgrassmannian}, the proof is complete. 
{Theorem~1.2 of \cite{HoM10}} is a generalization of Cartan-Fubini type extension results obtained by Hwang-Mok in \cite{HM01}, to the situation of non-equidimensional holomorphic mappings modeled on pairs $(X_o,X)$ of the subdiagram type. We refer readers to \cite{KO81, HM01, HM04, M08a, HoM10, HN21} for developments in this direction.

On the other hand, if the source domain is of type~III and the target domain is of type~I, then we need to make use of \cite[Section 6]{M19}.
In \cite{M19}, the second author gave sufficient conditions for the rigidity of an admissible pair $(X_o, X)$ which is not of {the} subdiagram type. As a consequence, he used this result to prove that the admissible pair $(SGr(n, \mathbb C^{2n}), Gr(n,\mathbb C^{2n}))$ is rigid. We { partially} generalize the latter result to the admissible pair $(SGr(q, \mathbb C^{2n}), Gr(q, \mathbb C^{2n}))$, $2\leq q\leq n$, { under the additional assumption $(\sharp)$ that the support $S \subset Gr(q,\mathbb C^{2n})$ of a sub-VMRT structure $(S,\mathscr C_{X}\cap\mathbb PT_S)$ modeled on $(X,X')$ is the image of a VMRT-respecting holomorphic embedding {$H$} from a connected open subset of $SGr(q,\mathbb C^{2n})$ into  $Gr(q,\mathbb C^{2n})$ which transforms any connected open subset of a minimal rational curve into a minimal rational curve, in which case it is known that the holomorphic embedding admits a rational extension (cf. \cite{HoM10}). Alternatively, rational extension of {$H$} also follows from the Hartogs phenomenon as applied in Mok-Tsai \cite{MT92}, an argument which we have made use of in the current article to prove rational extension { in Lemma~\ref{rational extension} and Lemma~\ref{commute}.}}

It is possible, along the line of thoughts of \cite{M19}, to entirely remove the assumption $(\sharp)$ {(cf. Remark~\ref{remark_section4}~(a))}, but we will refrain from proving the (full) rigidity of the admissible pair $(SGr(q, \mathbb C^{2n}), Gr(q, \mathbb C^{2n}))$ as that is not needed for the current article. For the notion of admissible pairs and the rigidity of the admissible pairs of the subdiagram type, see \cite{MZ19}.

{ It is worth noting that 
the { analogues} of Theorem~\ref{main} and Theorem~\ref{nonexistence} involving $\Omega$ of type~I or type~III and $\Omega'$ of type~II are not covered in the current article and would be a natural continuation to our work.}

The organization of the current article is as follows.
In Section~\ref{preliminaries}, we describe the moduli
spaces $\mathcal D_r(X)$ and $D_r(X)$ of characteristic subspaces in $X$.
In Section~\ref{Subgrassmannians in the moduli spaces}, we present the subgrassmannians of $\mathcal D_r(X)$ and $D_r(X)$. Then we explain the CR structure of the unique closed orbit $\Sigma_r(X)$ in $D_r(X)$.
In Section~\ref{Rigidity of subgrassmannian respecting holomorphic maps}, we investigate the rigidity of subgrassmannian respecting holomorphic maps between $D_r(X)$ and $D_{r'}(X')$.  {For the treatment of this topic the cases where $X$ and $X'$ are of the same type I, II or III leads us eventually to the rigidity phenomenon for admissible pairs of {\it the subdiagram type} of irreducible compact Hermitian symmetric spaces, which was already established in \cite{HoM10} (in the more general context of rational homogeneous spaces), whereas the case where $X$ is a Lagrangian Grassmannian $LGr_n$ (i.e., $X$ is of type III) leads to a rigidity problem for admissible pairs of {\it non-subdiagram} type.  In order to proceed with Section~\ref{Rigidity of subgrassmannian respecting holomorphic maps} in a way that incorporate all pairs $(X,X')$ being considered in the article, we first consider in Section~\ref{Rigidity of the pair} the rigidity phenomenon for the pair $(SGr(q,\mathbb C^{2n}),Gr(q,\mathbb C^{2n}))$. Section~\ref{Rigidity of subgrassmannian respecting holomorphic maps} then consists of several lemmas to prove Proposition~\ref{H respects}}, which is the main technical result in this paper. 
In {Section~\ref{Induced moduli map}}, we {define} moduli maps $f_r^\sharp$ (resp. $f^\flat_r$) between $\mathcal D_r(X)$ (resp. $D_r(X)$) and $\mathcal D_{r'}(X')$ (resp. $D_{r'}(X')$) which are induced by a proper holomorphic map between $\Omega$ and $\Omega'$.
In Section~\ref{Rigidity of the induced moduli map}, we {show} that 
$f^\flat_r$ is a subgrassmannian respecting holomorphic map and extends to a standard holomorphic embedding for some $r$.
Finally in Section~\ref{Proof of Theorems}, we prove Theorem~\ref{main} and \ref{nonexistence}. {In Section~\ref{Appendix} we prove some results from the method of moving frames that have been used in the article.}

\medskip

{\bf Acknowledgement} The first author was supported by the Institute for Basic Science (IBS-R032-D1-2021-a00). The second author was supported by the GRF-grant { 17306523} of the HKRGC (Hong Kong Research Grants Council). 
The third author was partially supported by Basic Science Research Program through the National Research Foundation of Korea (NRF) funded by the Ministry of Education (NRF-2022R1F1A1063038).

\section{preliminaries}\label{preliminaries}
\subsection{Hermitian symmetric spaces}\label{Hermitian symmetric spaces}
{
Let $(X_o,g_o)$ be an irreducible Hermitian symmetric space of the noncompact type and denote by $G_o$ the identity component of its automorphism group of biholomorphic self-maps (which are necessarily isometries with respect to $g_o$), which is a K\"ahler-Einstein metric of negative Ricci curvature. Let $K \subset G_o$ be a maximal compact subgroup, so that $X_o = G_o/K$ as a homogeneous space and $K \subset G_o$ is the isotropy subgroup at $o := eK$, $e \in G_o$ being the identity element. { The structure of $X_o = G_o/K$ corresponds to a simple orthogonal symmetric Lie algebra $(\mathfrak{g},\mathfrak{k};\theta)$ (cf. Helgason \cite[Chapter IV, Proposition 3.5]{Hel78}) where $\theta \in {\rm End}(\mathfrak{g}_o)$ is a Lie algebra automorphism such that $\mathfrak{k} \subset \mathfrak{g}_0$ is precisely the subset of elements fixed by $\theta$ (which is the differential at $e \in \mathfrak{g}_o$ of an inner automorphism $\tau$ of $G_o$, $\tau(g) = s^{-1}gs$ for some element $s$ belonging to the center $Z(K) \cong \mathbb S^1$ of $K$, cf. e.g., Mok \cite[p.49]{M89}). }

In what follows, for a Lie group denoted by a Roman letter we denote the associated Lie algebra by the corresponding Gothic letter, and vice versa. Write $\mathfrak g_o = \mathfrak k \oplus \mathfrak m$ for the Cartan decomposition of $\mathfrak g_o$ at $o$ which is the eigenspace decomposition of $\theta$ on $\mathfrak g_o$ corresponding to the eigenvalues $1$ and $-1$ respectively. There is an element $z$ in the center $\mathfrak z$ of $\mathfrak k$ such that ${\rm ad}(z)|_{\mathfrak m}$ is the almost complex structure at $o$. We write $\mathfrak g$ for the complexification of $\mathfrak g_o$, and $G$ for the complexification of $G_o$ so that $G_o \hookrightarrow G$ canonically. { We have the Harish-Chandra decomposition $\mathfrak g = \mathfrak m^+ \oplus \mathfrak k^\mathbb C \oplus \mathfrak m^-$ (where for a real vector space $V$ we denote by $V^\mathbb{C}$ the complexification $V\otimes_{\mathbb R}\mathbb C$), which is the eigenspace decomposition for ${\rm ad}(z)$, extended by complex linearity as an element of ${\rm End}_{\mathbb C}(\mathfrak g)$, corresponding to the eigenvalues $\sqrt{-1}, 0$ and $-\sqrt{-1}$ respectively.} Writing $\mathfrak p := \mathfrak k^\mathbb C \oplus \mathfrak m^- \subset \mathfrak g$, $\mathfrak p \subset \mathfrak g$ is a parabolic subalgebra, and $G/P$ is the presentation as a complex homogeneous space of a Hermitian symmetric space $X$ of the compact type dual to $X_o$.  The canonical embedding $G_o \hookrightarrow G$ induces a holomorphic map $X_o = G_o/K \hookrightarrow G/P = X$, which is the Borel embedding realizing $X_o$ as an open subset of $X$. At $o = eK \in G_o/K \hookrightarrow G/P$ write $\mathfrak{m}^+ := T_o(X_o) = T_o(X)$.  The Harish-Chandra embedding theorem gives a holomorphic embedding $\tau: \mathfrak{m}^+ \rightarrow X$ onto a Zariski open subset in $X$ such that $\Omega := \tau^{-1}(X_o) \Subset \mathfrak{m}^+$ is a bounded symmetric domain on the Euclidean space $\mathfrak{m}^+ \cong \mathbb C^n$, where $n = \dim_{\mathbb C}(X)$ (cf.\cite{W72}).}

{
Write $\mathfrak g_c = \mathfrak{k} \oplus \sqrt{-1}\mathfrak{m} \subset \mathfrak{g}$.  Then, $\mathfrak g_c$ is the Lie algebra of a compact Lie subgroup $G_c \subset G$. The Lie groups $G_c$ and $G_o$ are respectively a compact real form and a noncompact real form of the simple complex Lie group $G$ (with a trivial center), such that $G_c \cap G_o = K$. $G_c$ acts transitively on $X$, and, extending $\theta \in {\rm End}(\mathfrak g_o)$ by complex linearity to $\mathfrak g$, $\mathfrak g_c$ is stable under $\theta$, and $(\mathfrak g_c,\mathfrak k;\theta)$ is a simple orthogonal Lie algebra underlying $X_c := G_c/K$ as an irreducible Hermitian symmetric space of the compact type.  There is a $G_c$-invariant K\"ahler-Einstein metric $g_c$ of positive Ricci curvature such that $((X_c,g_c),(X_o,g_o))$ is a dual pair of irreducible Hermitian symmetric spaces of the semisimple type. Moreover, $\mathfrak g_c = \mathfrak{k} \oplus \sqrt{-1}\mathfrak{m} \subset \mathfrak{g}$ is the Cartan decomposition of $\mathfrak g_c$. In what follows we will identify $X_c = G_c/K$ with $X = G/P$ via the biholomorphism induced from the inclusion $G_c \subset G$. 


Let $\mathfrak{h} \subset \mathfrak{g}$ be a Cartan subalgebra lying inside $\mathfrak{k}^\mathbb{C}$, and $\Delta := \{\alpha_1,\cdots,\alpha_s\}$ be a full set of simple roots of $\mathfrak{g}$ with respect to $\mathfrak{h}$, and $\Phi$ be the set of all $\mathfrak{h}$-roots of $\mathfrak{g}$, so that $\mathfrak{g} = \mathfrak{h} \oplus \left(\bigoplus_{\varphi \in \Phi} \mathfrak{g}^{\varphi}\right)$, where $\mathfrak{g}^\varphi \subset \mathfrak g$ is the (complex 1-dimensional) root space associated to the root $\varphi \in \Phi$. (Here and in what follows by a root we always mean an $\mathfrak{h}$-root in $\mathfrak{g}$, i.e., an element of $\Phi$.) There are the standard notions of positive roots and negative roots, and of compact and noncompact roots in $\Phi$, so that, $\pi \in \Phi$ is called a positive root if and only if it is an integral combination $n_1\alpha_1+\cdots + n_s\alpha_s$, where each $\alpha_k$ is a nonnegative integer for $1 \le k \le s$ and $(\alpha_1,\cdots,\alpha_s) \neq 0$, and $\varphi \in \Phi$ is called a compact root if and only if $\mathfrak{g}^\varphi \subset \mathfrak{k}^{\mathbb C}$, otherwise $\varphi$ is called a noncompact root.  Two distinct roots $\varphi_1, \varphi_2 \in \Phi$ are said to be strongly orthogonal if and only if neither $\varphi_1 + \varphi_2$ nor $\varphi_1 - \varphi_2$ is a root. 

Note that the notation $\mathfrak{m}^{+}$ { has been} given two interpretations: (a) as the holomorphic tangent space $T_oX$ and (b) as a complex vector subspace of $\mathfrak{g}$ in the Harish-Chandra decomposition $\mathfrak{g} = \mathfrak{k}^{\mathbb C} \oplus \mathfrak{m}^{\mathbb C} = \mathfrak{k}^{\mathbb C} \oplus \mathfrak{m}^{+} \oplus \mathfrak{m}^{-}$.  Regard $\mathfrak{g}$ as the Lie algebra of holomorphic vector fields on $X = G/P$.  If we identify $u \in T_o(X) \cong \mathfrak{m}^{+}$ with the holomorphic vector field $u'$ in $\mathfrak{m}^{+} \subset \mathfrak{g}$ that it corresponds to as a result of the two interpretations of $\mathfrak{m}^{+}$ as given above, the holomorphic embedding $\tau: \mathfrak{m}^{+} \to X$ is given by $\tau(u) = {\rm exp}(u')(e)\ {\rm mod} \ P \in G/P = X$.}


{


Let $\Pi \subset \Phi$ be a maximal set of mutually strongly orthogonal positive noncompact roots. We have $|\Pi| = {\rm rank}(\Omega) = q$.  Let $\Lambda \subsetneq \Pi$ be nonempty, $1 \le r := |\Lambda| < q$. In \cite{MT92} the authors defined a characteristic subspace $X_{\Lambda,o} \subset X_o$ which is a totally geodesic complex submanifold in $(X_o,g_o)$ passing through $o \in X_o$ together with a characteristic subspace $X_{\Lambda} \subset X$ which is a totally geodesic complex submanifold in $(X,g_c)$ such that $X_{\Lambda,o} \subset X_{\Lambda}$.  $(X_{\Lambda,o}, X_{\Lambda})$ is a dual pair of irreducible Hermitian symmetric spaces, and the inclusion $X_{\Lambda,o} \subset X_{\Lambda}$ is the Borel embedding.  Moreover, $X_{\Lambda,o}$ corresponds under the holomorphic embedding $\tau: \mathfrak{m}^{+} \to X$ to $\tau^{-1}(X_{\Lambda,o}) =: \Omega_{\Lambda} \subset \Omega$. By \cite[Proposition 1.12]{MT92}, $\Omega_{\Lambda} \subset \Omega$ is of the form $\Omega_\Lambda = \mathfrak{m}_{\Lambda}^{+} \cap \Omega$ for the complex linear subspace $\mathfrak{m}_{\Lambda}^{+} \subset \mathfrak{m}^{+}$ identified with $T_o(X_\Lambda)$.  By a case-by-case checking each $X_{\Lambda}$ (and hence each $X_{\Lambda,o}$) is an irreducible Hermitian symmetric space. We have ${\rm rank}(X_{\Lambda}) = {\rm rank}(X_{\Lambda,o}) = |\Lambda| = r$.

From the Restricted Root Theorem it is well-known from Wolf \cite{W72} (as given in \cite[Proposition 1.4.1]{MT92}) that whenever $\Lambda_1$ and $\Lambda_2$ are of the same cardinality, there exists $k \in K$ such that $X_{\Lambda_2} = k(X_{\Lambda_1})$ (hence also $X_{\Lambda_2,o} = k(X_{\Lambda_1,o})$) . By a {\it characteristic subspace} of $(X,g_c)$ we mean $h(X_{\Lambda})$ for some $h \in G_c$ and for some $\Lambda$.  From \cite[Proposition 1.12]{MT92}  and \cite[Lemma 4.4]{Ts93} characteristic subspaces of $X$ are {\it invariantly geodesic} in the sense that $\gamma(X_{\Lambda})$ is totally geodesic in $(X,g_c)$ for any $\gamma\in G$ and an $\Lambda$. In particular, for a characteristic subspace $Y \subset X$ passing through $o =eP \in G/P \cong X$ and for $\gamma \in M^- = {\rm exp}(\mathfrak{m}^-)$, $\gamma Y \subset X$ is totally geodesic in $(X,g_c)$ while $T_o(\gamma Y) = T_o(Y)$ (since $\mathfrak{m}^-$ consists of holomorphic vector fields vanishing to the order 2, hence $d\gamma(o) = {\rm id}_{T_o(X)}$), and it follows from uniqueness properties of totally geodesic complex submanifolds that $\gamma Y = Y$.  Moreover, by \cite[{\it loc. cit.}]{MT92}, for any $\gamma \in G$ such that $A := \gamma(X_{\Lambda}) \cap \mathfrak{m}^{+} \neq \emptyset$, $A \subset \mathfrak{m}^{+} $ is a complex affine subspace.  

By a {\it characteristic subspace} of $(X_o,g_o)$ we mean $h(X_{\Lambda,o})$ for some $h \in G_o$ and for some $\Lambda$, and a {\it characteristic subdomain} $\Omega' \subset \Omega$ is simply $\tau^{-1}(Y_o)$ for some characteristic subspace $Y_o \subset X_o$. For $1 \le r < q$ we see from the Restricted Root Theorem that there is up to the action of $G$ (resp. $G_o$) only one isomorphism class of characteristic subspaces $Y \subset X$ (resp. $Y_o \subset X_o)$ of rank $r$, thus also only one isomorphism class of characteristic subdomains $\Omega' \subset \Omega$ of rank $r$ under the natural action of $G_o$ on $\Omega$. 


For $1 \le r < q$ the complex Lie group $G$ acts on the set $\mathfrak C_r$ of characteristic subspaces of $X$ of rank $r$, hence $\mathfrak C_r$ admits the structure of a complex homogeneous manifold. From the definition, $G_c \subset G$ already acts transitively on $\mathfrak C_r$. It follows that $\mathfrak C_r$ is compact, hence $\mathfrak C_r$ is a rational homogeneous manifold given by $\mathfrak C_r = G/Q$ for some parabolic subgroup $Q\subset G$. Denote by $\mathfrak D_r$ the moduli space of characteristic subdomains ${\Omega'_r} \subset \Omega$ of rank $r$. By the description ${\Omega_r'} = Y \cap \Omega$ for some characteristic subspace $Y \subset X$ of rank $r$ it follows that $\mathfrak D_r$ can be identified as an open subset (in the complex topology) of $\mathfrak C_r$, and it was proven in \cite{MT92} by Oka's characterization of domains of holomorphy that every meromorphic function on $\mathfrak D_r$ extends to a rational function on $\mathfrak C_r$, an intermediate result essential for both \cite{MT92} and \cite{Ts93}.

Now suppose $\Omega' \subset \Omega$ is a characteristic subdomain.  Write $\Omega' = h(\Omega_\Lambda)$ for some characteristic subdomain $\Omega_\Lambda = \tau^{-1}(X_{\Lambda,o})\Subset \mathfrak{m}_\Lambda^{+}$ and for some $h \in G_o$.  Recall that $\Pi = \{\pi_1,\cdots,\pi_r\}$ is a maximal set of mutually strongly orthogonal positive noncompact roots.  Consider $\Lambda_1 := \Pi - \{\psi_1\}$ for any element $\psi_1 \in \Pi$.  A nonzero vector $\alpha \in \mathfrak{g}^{\psi_1}$ is a minimal rational tangent in the sense that $\alpha \in T_{o}\ell_\alpha$ for some minimal rational curve $\ell_\alpha \subset X$ passing through $o$.  Note that $\Delta_\alpha := \ell_\alpha \cap \Omega$ is a minimal disk on $\Omega$. Write $L: = SU(1,1)/\{\pm I_2\}$. By \cite[Proposition 1.7]{MT92}, there exists an $(L \times G_{\Lambda_1,o})$-equivariant holomorphic totally geodesic embedding of $\Delta \times \Omega_{\Lambda_1}$ into $\Omega$, written here as $\beta_1: \Delta \times \Omega_{\Lambda_1} \to \Omega$, where $G_{\Lambda_1,o} \subset G_o$ is a noncompact real form of $G_{\Lambda_1} \subset G$, in which the Lie algebra $\mathfrak{g}_{\Lambda_1}$ of $G_{\Lambda_1}$ is the derived algebra of $\mathfrak h + \bigoplus_{\rho\perp \psi_1}\mathfrak{g}^\rho$, where for $\rho_1, \rho_2 \in \Phi$, $\rho_1\perp \rho_2$ if and only if $B(\rho_1,\overline{\rho_2}) = 0$ for the Killing form $B(\cdot,\cdot)$ of $\mathfrak{g}$. Noting that $\Omega_{\Lambda_1}$ is irreducible, by induction it follows that for any $\Lambda, \emptyset \neq \Lambda \subsetneq \Pi$, writing $\Psi = \{\psi_1,\cdots,\psi_{r}\}$ and $\Lambda = \Pi - \Psi$, there exists an $(L^{q-r}\times G_{\Lambda,o}$)-equivariant holomorphic totally geodesic embedding $\beta: \Delta^{q-r} \times \Omega_{\Lambda}$ into $\Omega$, where $G_{\Lambda,o} \subset G_o$ is a noncompact real form of $G_\Lambda \subset G$, in which the Lie algebra $\mathfrak{g}_\Lambda$ of $G_{\Lambda}$ is the derived algebra of $\mathfrak h + \bigoplus_{\rho\perp\Psi}\mathfrak{g}^\rho$, $\rho\perp\Psi$ meaning $\rho\perp\psi$ for all $\psi \in \Psi$.  Note that $\beta$ extends to a holomorphic embedding, still to be denoted $\beta$, of $(\mathbb P^1)^{q-r} \times X_\Lambda$ into $X$ (when we identify $\Omega$ with $X_o$).  In particular, $\beta$ is defined and continuous on $\overline{\Delta^{q-r} \times \Omega_{\Lambda}}$. 

{The topological boundary $\partial \Omega$ of $\Omega$ decomposes into a disjoint union $\bigcup_r S_r$ of $G_o$-orbits $S_r, ~r=0,\ldots, q-1$. {To emphasize $X$ or $\Omega$, we will occasionally write $S_r$ as $S_r(X)$ or $S_r(\Omega)$ in the future.} Each $S_r$ is foliated by maximal complex manifolds called {\it boundary components} of $\Omega$. For the definition of boundary components, see \cite[Part I, 5. Boundary Components]{W72}.}

The boundary components of $\Omega$ of rank $r$ lying on $\overline{\beta(\Delta^{q-r} \times \Omega_{\Lambda})}$ are of the form $\beta(\{a\}\times \Omega_\Lambda)$, where $a \in (\partial \Delta)^{q-r}$. The group $G_o$ acts transitively on the moduli space of boundary components of $\Omega$ of any fixed rank. Write $B_1 = \beta(\{(1,\cdots,1)\}\times \Omega_\Lambda)$.  Then, for any boundary component $B \subset \partial\Omega$ of rank $q-r$, there exist $\gamma \in G_o$ such that $B = \gamma(B_1)$.  Then $\gamma\circ\beta: (\Delta^{q-r} \times \Omega_{\Lambda}) \to \Omega$ is an $(L^{q-r}\times G_{\Lambda,o}$)-equivariant holomorphic totally geodesic embedding whose image contains $B$ in its topological closure, as described.

Write $\Sigma \subset \Omega$ for the image of $\gamma\circ\beta$. Then, $\Sigma \subset \Omega$ is a holomorphically embedded totally geodesic copy of $\Delta^{q-r} \times \Omega_{\Lambda}$ such that $B \subset \overline{\Sigma}$. We note that such a complex submanifold $\Sigma \subset \Omega$ is not unique.  In fact $\Sigma' = \nu(\Sigma)$ plays the same role as $\Sigma$ for any $\nu \in G_o$ belonging to the stabilizer subgroup $N \subset G_o$ of $B \subset S_r \subset \partial\Omega$. (Since the moduli space $\mathfrak C_r$ is compact and the moduli space of all $\Sigma = \delta(\Delta^{q-r} \times \Omega_{\Lambda}), \delta := \gamma\circ\beta$ as in the above, is easily checked to be noncompact, $N$ does not stabilize $\Sigma$.)  Noting that $K = {\rm Aut}_o(\Omega)$ acts transitively on the moduli space $\mathfrak{B}_r$ of boundary components of rank $r$ (cf. Wolf \cite[p. 287]{W72}), in the previous paragraph we may choose $\gamma = \kappa \in K$, in which case $\Sigma \subset \Omega$ passes through $o \in \Omega$. We observe that there is a unique $\Sigma = ({\kappa}\circ\beta)(\Delta^{q-r} \times \Omega_{\Lambda}) \subset \Omega$ such that $o \in \Sigma$ and $B\in \overline{\Sigma}$.  To see this, it suffices to note that the complex submanifold $\Sigma \subset \Omega$, being totally geodesic and passing through $o \in \Omega$, is uniquely determined by the holomorphic tangent space $T_o\Sigma \subset T_o\Omega$, which is in turn determined by $B \subset S_r$. However, it can readily be checked that the set of all ${\Omega'_r} := (\kappa\circ\beta)(\{z\} \times \Omega_\Lambda)$, $\kappa \in K$, $z \in \Delta^{q-r}$, thus obtained does not exhaust all characteristic subdomains of rank $r$ on $\Omega$.  The approach of studying proper holomorphic maps in \cite{MT92} and \cite{Ts93} was to deduce properties on the restriction of proper holomorphic maps to characteristic subdomains from properties of radial limits (thus the role of the polydisk factor $\Delta^{q-r}$) of proper holomorphic maps on boundary components, hence the necessity for introducing holomorphic embeddings of $\Delta^{q-r} \times \Omega_{\Lambda}$ accounting for all characteristic subdomains.} 



 


\subsection{Moduli spaces of Hermitian symmetric subspaces and characteristic subspaces}\label{moduli spaces}
{Each irreducible Hermitian symmetric space is associated with a { Dynkin diagram marked at a single node}, and any Hermitian symmetric subspace corresponding to a marked subdiagram of the marked Dynkin diagram is termed a subspace of subdiagram type.}
In this subsection we describe moduli spaces of {certain} Hermitian symmetric subspaces of subdiagram type and characteristic subspaces
in the irreducible Hermitian symmetric space of type I, II, III.
We refer the reader to \cite{W69} and \cite[Part III]{W72} for more details.

\begin{enumerate}\label{subgrassmannian}
\item\label{Grassmannian} 
Let $X$ be the complex Grassmannian $Gr(q,p)$ consisting of $q$-planes passing through the origin in $\mathbb C^{p+q}$.
Then {$G = SL(p+q, \mathbb C)/\mu_{p+q}I_{p+q}$, where $\mu_m$ stands for the group of $m$-th roots of unity, and $I_m$ stands for the $m$-by-$m$ identity matrix,} and for any $A\in SL(p+q,\mathbb C)$, 
$A$ acts on $\Lambda^q\left({\mathbb C}^{p+q}\right)$ by
\begin{equation}\label{action}
A(w_1\wedge \cdots \wedge w_q) = Aw_1 \wedge \cdots \wedge Aw_q,
\end{equation}
where $w_1, \ldots, w_q \in \mathbb C^{p+q}$.  Taking $w_1, \ldots, w_q$ to be linearly independent and identifying $Gr(q,p)$ with its image in $\mathbb P\left(\Lambda^q\left({\mathbb C}^{p+q}\right)\right)$ under the Pl\"ucker embedding, we have the induced action of $A \in SL(p+q,\mathbb C)$ on $Gr(q,p)$.
A subgrassmannian in $Gr(q,p)$ is the set of all elements $x\in Gr(q, p)$ such that 
\begin{equation}\label{subgra1}
V_1 \subset x \subset V_2
\end{equation}
for given complex vector subspaces $V_1, V_2 \subset \mathbb C^{p+q}$.  
Hence, for fixed positive integers $a \leq  b$, the moduli space of subgrassmannians with $\dim V_1 =a$, $\dim V_2 = b$ is the flag variety 
\begin{equation}\label{flag}
\mathcal F(a,b ;\mathbb C^{p+q})=\{ (V_1, V_2) : \{0\} \subset V_1 \subset V_2\subset \mathbb C^{p+q}, \dim V_1 = a, \, \dim V_2 = b \}.
\end{equation}
Since $Gr(p,q)$ is biholomorphic to $Gr(q,p)$, without loss of generality we will assume from now on $q \le p$, so that $Gr(q,p)$ is of rank $q$.
For $(V_1, V_2)\in \mathcal F(a,b;\mathbb C^{p+q})$ we {denote} the corresponding
subgrassmannian by $X_{(V_1, V_2)}$.
{We denote the moduli space of subgrassmannians where} $\dim V_1 = q-r$, $\dim V_2 = p+r$ for $r=1,\ldots, q-1$ by $\mathcal D_r(X)$, i.e.,
\begin{equation}\label{css1}
\mathcal D_r(X)=\{ (V_1, V_2) : \{0\} \subset V_1 \subset V_2\subset \mathbb C^{p+q}, \dim V_1 = q-r, \, \dim V_2 = p+r \}.
\end{equation}

\item \label{Orthogonal}
Let $X$ be the orthogonal Grassmannian $OGr_n$ consisting of $n$-planes passing through the origin
in $\mathbb C^{2n}$ isotropic {with respect} to a nondegenerate symmetric 
bilinear form $S_n = \left( \begin{array}{cc}
0& I_n\\
I_n& 0
\end{array}\right)
$ on $\mathbb C^{2n}$. 
Note that $q:=\text{rank of } OGr_n = \left[ \frac{n}{2} \right]$. In this case 
{$G = SO(2n, \mathbb C)/\left\{\pm I_{2n}\right\}$} and it acts on 
$OGr_n$ by \eqref{action}.
Consider a subgrassmannian in $OGr_n$ which is the set of all elements $x\in OGr_n$ such that 
\begin{equation}\label{subgra2}
 V \subset x \subset V^\perp
\end{equation}
 for a given isotropic complex vector subspace $V\subset \mathbb C^{2n}$ {with respect to $S_n$}, 
 where $V^\perp$ denotes the {annihilator} of $V$ with respect to $S_n$.
Let $\mathcal D_r(X)$ {and $\mathcal D_{r, \frac12}(X)$} denote the moduli spaces of such subgrassmannians in $OGr_n$, i.e.,
for $r=1,\ldots, q-1$
{
\begin{equation}\label{css2}
\begin{aligned}
\mathcal D_r(X) &= \{ (V, V^\perp)\in \mathcal F(2(q-r), 2n-2(q-r);\mathbb C^{2n}) : S_n(V,V)=0\}\\
    \mathcal D_{r,\frac12}(X) &= \{ (V, V^\perp)\in \mathcal F(2(q-r)+1, 2n-2(q-r)-1;\mathbb C^{2n}) : S_n(V,V)=0\} 
    \end{aligned}
\end{equation}
}
For $ (V, V^\perp) \in \mathcal D_r(X)$ {or $\mathcal D_{r,\frac12}(X)$} we will denote the corresponding 
subgrassmannian by $X_{V}$.
\item \label{Lagrangian}
Let $X$ be the Lagrangian Grassmannian $LGr_n$ consisting of $n$-planes passing through the origin in $\mathbb C^{2n}$ {which are isotropic
with respect to the nondegenerate antisymmetric bilinear form} $J_n =\left( \begin{array}{cc}
0 & I_n \\
-I_n & 0
\end{array}\right)$ on $\mathbb C^{2n}$.
In this case {$G= Sp(n,\mathbb C)/\left\{\pm I_{2n}\right\}$} and it acts on $LGr_n$ by \eqref{action}.
Consider a subgrassmannian in $LGr_n$ which is the set of all elements $x\in LGr_n$ such that 
\begin{equation}\label{subgra3}
 V \subset x \subset V^\perp
\end{equation}
for a given isotropic complex vector subspace $V\subset \mathbb C^{2n}$ with $\dim V = n-r$, where $V^\perp$ denotes the {annihilator} with respect to $J_n$.
Let $\mathcal D_r(X)$ denote the moduli space of such subgrassmannians in $X=LGr_n$, i.e.,
for $r=1,\ldots, n-1$
\begin{equation}\label{css3}
\mathcal D_r(X) = \{ (V, V^\perp)\in \mathcal F(n-r, n+r;\mathbb C^{2n}) : J_n(V,V)=0\} 
\end{equation}
For $(V,V^\perp) \in \mathcal D_r(X)$ we will denote the corresponding 
subgrassmannian by $X_{V}$
\end{enumerate}
\medskip

Recall that for each boundary component ${B}\subset S_{r}$, there exists { a totally geodesic complex submanifold $\Sigma \subset \Omega$ passing through the origin $o \in \Omega$}, a polydisk $\Delta^{q-r}$, { and a totally geodesic holomorphic embedding $\epsilon: \Delta^{q-r} \times \Omega_0 \to \Omega$ such that $B = \epsilon(\{t\} \times \Omega_0)$ for some $t\in(\partial \Delta)^{q-r}$.}  For each point $z\in \Delta^{q-r}$,   $\epsilon(\{z\} \times \Omega_0) =: \Omega'\subset \Omega$ is a {characteristic subdomain} of $\Omega$.  In general each characteristic subdomain is a bounded symmetric domain { on a characteristic symmetric subspace $X'$ of $(X,g_c)$}. 

 { For the following description of the characteristic subdomains for each irreducible bounded symmetric domain of types I, II, and III, see \cite[Part III]{W72} for further information.}


\begin{enumerate}
\item 
Characteristic subspaces of rank $r$ in $Gr(q,p)$ are
the subgrassmannians $X_{(V_1, V_2)}$ with $\dim V_1 = q-r$ and $\dim V_2 = p+r$ in \eqref{subgra1} and hence 
the moduli space of them is $\mathcal D_r(X)$ given by \eqref{css1}.

The bounded symmetric domain $D_{p,q}^I$ corresponding to $Gr(q,p)$ is the set of $q$-planes in $\mathbb C^{p+q}$ on which the nondegenerate Hermitian form $I_{p,q} = \left(\begin{array}{cc}
I_q& 0 \\
0& -I_p
\end{array}\right)$ is positive definite.
Write $M^{\mathbb C}(p,q)$ for the set of $p \times q$ matrices with coefficients in $\mathbb C$, and denote by $\{e_1, \ldots, e_{p+q}\}$ the standard basis of $\mathbb C^{p+q}$.  For $Z \in M^\mathbb C(p,q)$, denoting by $v_k$, $1 \le k \le q$, the $k$-th column vector of $Z$ as a vector in $\mathbb C^p = {\rm Span}_\mathbb C\{e_{1+q},\ldots,e_{p+q}\}$ we identify $Z$ with the $q$-plane in $\mathbb C^{p+q}$ spanned by $\{e_k + v_k: 1 \le k \le q\}$.
Then we have
\begin{equation}\label{typeI}
D_{p,q}^I= \left\{ Z\in M^{\mathbb C}(p,q): I_q -  Z^*Z >0 \right\}
\end{equation}
where $Z^*$ denotes the conjugate transpose of $Z$.
The characteristic subdomains of rank $r$ of $D_{p,q}^I$ are of the form $X_{(V_1, V_2)}\cap D_{p,q}^I$ with $(V_1, V_2)\in \mathcal D_r(X)$.

\item
Characteristic subspaces of $OGr_n$ of rank $r$ are the subgrassmannians of the form \eqref{subgra2}
with $\dim V= 2\left[ \frac{n}{2} \right]-2r$.
Hence the moduli space of {these subgrassmannians} is $\mathcal D_{r}(X)$.

The bounded symmetric domain corresponding to $OGr_n$ is the set of $n$-planes in $X$ on {which} $I_{n,n}$ is positive definite.
It is given by
\begin{equation}\nonumber
D_{n}^{II}= \left\{ Z\in M^{\mathbb C}(n,n) : I_n -  Z^*Z >0,\,
Z=-Z^t \right\}.
\end{equation}
The characteristic subdomains of $D_{n}^{II}$ are of the form $X_V\cap D_{n}^{II}$ with $(V,V^\perp)\in \mathcal D_{r}(X)$.

\item
Characteristic {subspaces} of $LGr_n$ of rank $r$ {are} of the form \eqref{subgra3}
with $\dim W = n-r$.
Hence the moduli space of {of these subgrassmannians} is $\mathcal D_r(X)$.

The bounded symmetric domain corresponding to $LGr_n$ is the set of $n$-planes in $LGr_n$ on {which} $I_{n,n}$ is positive definite.
It is given by
\begin{equation}\nonumber
D_{n}^{III}= \left\{ Z\in M^{\mathbb C}(n,n) : I_n -  Z^*Z >0,\,
Z=Z^t \right\}.
\end{equation}
The characteristic subdomains of $D_{n}^{III}$ are of the form $X_V\cap D_{n}^{III}$ with $(V, V^\perp)\in \mathcal D_r(X)$.
\end{enumerate}
\medskip

Define
$$
	\mathcal{D}_r(\Omega):=\{\sigma\in \mathcal{D}_r(X)\colon  \Omega_\sigma := X_\sigma\cap\Omega\neq \emptyset\},
$$
where $X_\sigma$ is the subgrassmannian of $X$ corresponding to $\sigma\in \mathcal D_r(X)$.
We may consider 
$\mathcal D_r(\Omega)$ as the moduli space 
of the characteristic subdomains of rank $r$.
For each boundary orbit $S_k$ with $k\geq r$, define
\begin{equation}\label{Dr(Sr)}
	\mathcal{D}_r(S_k):=\{\sigma\in \mathcal{D}_r(X)
	\colon \Omega_\sigma:= X_\sigma\cap S_k \text{ is a nonempty open set in }X_\sigma\}.
\end{equation}
{Similarly, we define $\mathcal D_{r, \frac12}(\Omega)$ and $\mathcal D_{r,\frac12}(S_k)$ for the type II domains. Then $\mathcal{D}_r(\Omega)$, $\mathcal{D}_r(S_k)$ and $\mathcal D_{r, \frac12}(\Omega)$, $\mathcal D_{r,\frac12}(S_k)$ are $G_o$-orbits in $\mathcal{D}_r(X)$ and  $\mathcal{D}_{r, \frac12}(X)$ such that $\mathcal{D}_r(S_k)\subset\partial 		\mathcal{D}_r(\Omega)$ and $\mathcal{D}_{r,\frac12}(S_k)\subset\partial \mathcal{D}_{r,\frac12}(\Omega)$, respectively.} 
For notational consistency, we define
$$	~\mathcal{D}_0(X)=X,~\mathcal{D}_0(\Omega)=\Omega,
	~\mathcal{D}_0(S_k)=S_k.
$$
{Whenever necessary}, we will denote by $S_r(X)$ the boundary orbits of $\Omega\subset X$ for {a} specific $X$. 
\medskip

By Section 10 of \cite{W72}, we obtain the following lemma.

\bl\label{null}
Let $X=Gr(q,p)$. Then,
$\mathcal{D}_r(S_r)$ is parametrized by $(q-r)$-dimensional subspaces of $\mathbb C^{p+q}$ isotropic with respect to $I_{p,q}$. More precisely, any $\sigma\in \mathcal D_r(S_r)$ is of the form
$\sigma=(V_1, V_2)$, where $V_1$ is a $(q-r)$-dimensional isotropic {subspace} of $I_{p,q}$, $V_2$ is the {annihilator} of $V_1$ with respect to $I_{p,q}$ and vice versa. 
\el

Since one can embed $OGr_n$ and $LGr_n$ into $Gr(n,n)$ as totally geodesic {complex} submanifolds, by Lemma~\ref{null}, we conclude that
$\mathcal D_{r}(S_r)$ is parametrized by $(2[n/2]-2r)$-dimensional isotropic spaces {with respect to} $I_{n,n}$ for $X=OGr_n$ and 
$(n-r)$-dimensional isotropic spaces {with respect to} $I_{n,n}$ for $X=LGr_n$.

\subsection{Associated characteristic bundles}
We refer the reader to \cite{M89} as a general reference for this subsection.
For each $\sigma\in \mathcal{D}_r(\Omega)$, there exists a polydisk $\Delta^{q-r}$ such that $\Delta^{q-r}\times \Omega_\sigma$ is a totally geodesic submanifold of $\Omega$.
{Let $Gr(q-r,T\Omega)$ be the Grassmannian bundle defined by $\bigcup_{p\in \Omega}Gr(q-r, T_p\Omega)$.} Define {$\mathscr C^{q-r}(\Omega)\subset Gr(q-r,T\Omega)$} to be the set of tangent spaces of such  $\Delta^{q-r}$'s. 
Define the {\it r-th associated characteristic bundle} $\mathcal{NS}_r(X)\subset Gr(n_r, TX)$ (resp. $\mathcal{NS}_r(\Omega)\subset Gr(n_r,T\Omega)$) to be the collection of all the holomorphic tangent 
spaces to $X_\sigma$ with $\sigma \in \mathcal{D}_r(X)$ (resp. $X_\sigma$ with $\sigma \in \mathcal{D}_r(\Omega)$), 
which is a holomorphic fiber bundle over $X$,
where $n_r=\dim(X_\sigma)$ for $\sigma\in \mathcal{D}_r(X).$
By \cite{MT92}, we obtain $\mathcal{NS}_r(\Omega) = \mathcal{NS}_r(\Omega)\big|_0 \times \Omega$.
From {\cite[p.249ff.]{M89}}, $\mathcal {NS}_{q-1}(\Omega)|_0$ is a Hermitian symmetric space of the compact type.  More generally we have { the}
following statement.

Here in the proof, for clarity we denote by $[\,\cdots]$ the point in a classifying space corresponding to the object inside the square bracket.

\bl \label{NS}
$\mathcal{NS}_{r}(X)\big|_0$ is a Hermitian symmetric space of the compact type.
\el

{
\begin{proof}
\begin{enumerate}
\item {\bf $Gr(q,p)$ :}\
For a point $[V] \in X = Gr(q,p) = Gr(q,\mathbb C^{p+q})$ we have $T_{[V]}(Gr(q,p)) = V^*\otimes \mathbb C^{p+q}/V$.
Fix the base point $0 = [V_0]\in Gr(q,p)$ and identify $T_0X$ with $M^{\mathbb C}(p,q)$. Denote by $K^\mathbb C$ the image of $GL(q,\mathbb C) \times GL(p,\mathbb C)$ in {$GL(V_0^*\otimes \mathbb C^{p+q})/V_0$} where $(A,B) \in GL(q,\mathbb C) \times GL(p,\mathbb C)$ acts on $Z \in M^{\mathbb C}(p,q)$ by $(A,B)(Z) = BZA^{-1}$, which descends to the isotropy action of $K^\mathbb C$ on $T_0X$. By definition $\mathcal{NS}_r(\Omega)|_0 \subset Gr(n_r,T_0(\Omega))$, $n_r = r(p-q+r)$. The isotropy action of $K^\mathbb C$ on $T_0X$ induces a $K^\mathbb C$-action on $Gr(r(p-q+r),T_0(\Omega))$, and $K^\mathbb C$ acts transitively on $\mathcal{NS}_r(\Omega)|_0$.  When $\sigma \in \mathcal D_r(X)$ corresponds to $X_\sigma \subset  Gr(q,p)$ and $X_\sigma$ passes through 0, we have $[T_0(X_\sigma] := [E_{r}\otimes F_{p-q+r}] \in Gr(n_r,T_0(\Omega)$, where $E_{r}$ (resp. $F_{p-q+r}$) is a vector subspace in $V_0^* \cong \mathbb C^q$ (resp. {in} $\mathbb C^{p+q}/V_0 \cong \mathbb C^p$) of dimension $r$ (resp. $p-q+r$).  The action of $K^\mathbb C$ on $Gr(r(p-q+r),T_0(\Omega))$ descends from $(A,B)[E_{r}\otimes F_{p-q+r}] = [(AE_r)\otimes (BF_{p-q+r})]$.  As a $K^\mathbb C$-orbit, $\mathcal{NS}_r(\Omega)\big|_0 \cong Gr(r, q-r)\times Gr(p-q+r,q-r)$.   

\item {\bf $OGr_n$ :} \ Recall that $X = OGr_n$ consists of isotropic $n$-planes in $(\mathbb C^{2n}, S)$, $S$ being a nondegenerate symmetric bilinear form. For $[V] \in OGr_n \subset Gr(n,n)$ we have $T_{[V]}(Gr(n,n)) = V^*\otimes \mathbb C^{2n}/V$.  Under the isomorphism $\mathbb C^{2n}/V \cong V^*$ induced by $S$, { we have $T_{[V]}(Gr(n,n)) \cong V^*\otimes V^*$}, and $T_{[V]}(OGr_n) = \Lambda^2V^*$.  At the base point $0 = [V_0] \in OGr_n$ identify $T_0X$ with $\Lambda^2V_0^* \cong \Lambda^2(\mathbb C^n) \cong M_a^{\mathbb C}(n,n)$. Here, $M_a^{\mathbb C}(n,n)$ denotes the set of anti-symmetric $n\times n$ matrices with complex entries. Take $\mathbb C^n$ to consist of column vectors $w$, on which $GL(n,\mathbb C)$ acts by $A(w) = Aw$.  Let $K^\mathbb C$ be the image of $GL(n,\mathbb C)$ in $GL(\Lambda^2\mathbb C^n)$ by the action $A(Z) = AZA^t$ for $Z \in M_a^\mathbb C(n,n)$.  By definition $\mathcal{NS}_r(\Omega)|_0 \subset Gr(n_r,T_0(\Omega))$, $n_r := r(2r-1)$.  When $\sigma \in \mathcal D_r(X)$ corresponds to $X_\sigma \subset OGr_n$, and $X_\sigma$ passes through 0, we have $[T_0(X_\sigma)] := [\Lambda^2(E_{2r})] \in Gr(r(2r-1),T_0(\Omega))$, $E_{2r} \subset \mathbb C^n$ being a $(2r)$-plane. The action of $K^\mathbb C$ on $\mathcal{NS}_r(\Omega)|_0$ descends from $A(\Lambda^2(E_{2r})) = \Lambda^2(A(E_{2r}))$ for $A \in GL(n,\mathbb C)$ and $[E_{2r}] \in Gr(2r,n-2r)$. As a $K^\mathbb C$-orbit, $\mathcal{NS}_r(\Omega)|_0$ is the image of $Gr(2r,n-2r)$ in $Gr\left(r(2r-1),T_0(\Omega)\right)$ under the holomorphic embedding $\lambda: Gr(2r,n-2r) \to Gr\left(r(2r-1),\Lambda^2(\mathbb C^n)\right)$ defined by $\lambda([E_{2r}]) = [\Lambda^2(E_{2r})]$ for any $(2r)$-plane $E_{2r} \subset \mathbb C^n$.  

\item {\bf $LGr_n$ :} \
\ Recall that $X = LGr_n$ consists of isotropic $n$-planes in $(\mathbb C^{2n}, J_n)$, $J_n$ being a symplectic form. For $[V] \in LGr_n \subset Gr(n,n)$ we have $T_{[V]}(Gr(n,n)) = V^*\otimes \mathbb C^{2n}/V \cong V^*\otimes V^*$ induced by $J_n$, and $T_{[V]}(LGr_n) = S^2V^*$. At the base point $0 = [V_0] \in OGr_n$ identify $T_0X$ with $S^2V_0^* \cong S^2(\mathbb C^n) \cong { M_s^{\mathbb C}(n,n)}$. Here, $M_s^{\mathbb C}(n,n)$ denotes the set of symmetric $n\times n$ matrices with complex entries. Let $K^\mathbb C$ be the image of $GL(n,\mathbb C)$ in $GL(S^2\mathbb C^n)$ by the action $A(Z) = AZA^t$ for $Z \in M_s^\mathbb C(n,n)$.  By definition $\mathcal{NS}_r(\Omega)|_0 \subset Gr(n_r,T_0(\Omega))$, $n_r := \frac{r(r+1)}{2}$.  When $\sigma \in \mathcal D_r(X)$ corresponds to $X_\sigma \subset LGr_n$, and $X_\sigma$ passes through 0, we have $[T_0(X_\sigma)] := [S^2(E_{r})] \in Gr\left(\frac{r(r+1)}{2},T_0(\Omega)\right)$, $E_{r} \subset \mathbb C^n$ being an $r$-plane. The action of $K^\mathbb C$ on $\mathcal{NS}_r(\Omega)|_0$ descends from $A(S^2(E_r)) = S^2(A(E_r))$ for $A \in GL(n,\mathbb C)$ and $[E_r] \in Gr(r,n-r)$. As a $K^\mathbb C$-orbit, $\mathcal{NS}_r(\Omega)|_0$ is the image of $Gr(r,n-r) $ in $Gr\left(\frac{r(r+1)}{2},T_0(\Omega)\right)$ under the holomorphic embedding $\nu: Gr(r,n-r) \to Gr\left(\frac{r(r+1)}{2},S^2(\mathbb C^n)\right)$ defined by $\nu([E_r]) = [S^2(E_r)]$, $E_r \subset \mathbb C^n$ being an $r$-plane. 

\end{enumerate}
\end{proof}}

\section{Subgrassmannians in the moduli spaces}\label{Subgrassmannians in the moduli spaces}

\bd\label{mathcal ZQ}
\begin{enumerate}
\item 
For $\tau \in \mathcal{D}_{s}(X)$ {or $\tau\in \mathcal D_{s,\frac12}$} with $s<r$, define
{
$$
\mathcal{Z}_{\tau}^{r}:=\{\sigma\in \mathcal{D}_r(X)\colon X_\tau\subset X_\sigma\}
$$ 
and
$$
\mathcal{Z}_{\tau}^{r,\frac12}:=\{\sigma\in \mathcal{D}_{r,\frac12}(X)\colon X_\tau\subset X_\sigma\}.
$$
}
\item
For $\mu\in \mathcal{D}_{s}(X)$ {or $\mu\in \mathcal D_{s,\frac12}(X)$} with $s>r$, define
$$
{ \mathcal{Q}_\mu^r}:=\{\sigma\in \mathcal{D}_r(X) \colon X_\sigma\subset X_{\mu}\}
$$
and {
$$
\mathcal{Q}_\mu^{r,\frac12}:=\{\sigma\in \mathcal{D}_{r,\frac12}(X) \colon X_\sigma\subset X_{\mu}\}.
$$}
\end{enumerate}
\ed
{From the definitions}, we obtain the following
for $X_\tau = X_{(V_1, V_2)}$ {or $X_\mu = X_{(V_1, V_2)}$}  
$$\mathcal{Z}_{\tau}^r
=\{(W_1, W_2)\in \mathcal D_r(X) : W_1\subset V_1,~ V_2\subset W_2  \}$$
and
$$\mathcal{Q}_{\mu}^r=\{(W_1, W_2)\in \mathcal D_r(X) : V_1\subset W_1,~ W_2\subset V_2  \}.$$
For a given $r$, we will omit { the} superscript $r$ if there is no confusion.
\medskip


Let 
$
pr\colon\mathcal{F}(a,b;V_X)\to Gr(a, V_X)
$
be the projection defined by 
$$
	pr(V_1, V_2)=V_1,
$$
where $V_X=\mathbb C^{p+q}$, if $X=Gr(q,p)$ and $\mathbb C^{2n}$, if $X=OGr_n$ or $ LGr_n$. 
\begin{Def}\label{DZQ}
For a given $r$, define
$$
	D_r(X): = pr(\mathcal{D}_r(X)),\quad
 Z_{\tau}:=pr(\mathcal Z_{\tau}),\quad
	Q_{\mu}:=pr(\mathcal Q_{\mu}),
$$
{and
$$
	D_{r,\frac12}(X): = pr(\mathcal{D}_{r,\frac12}(X)),\quad
 Z_{\tau}^{\frac12}:=pr(\mathcal Z_{\tau}^{\frac12}),\quad
	Q_{\mu}^{\frac12}:=pr(\mathcal Q_{\mu}^{\frac12}).
$$}
\end{Def}

$D_r(X)$ is a submanifold of $Gr(a, V_X)$, where $a=q-r$ if $X$ is of type~I or III and $a=2(q-r)$ if $X$ is of type~II and $Z_{\tau}$, $Q_{\mu}$ are subgrassmannians of $D_r(X)$.

In the case $X = Gr(q,p)$, $Q_\mu$ is the image of the holomorphic embedding $\imath: Gr(1,V_2/V_1) \to Gr(a+1,V_2)$, $a := \dim(V_1)$, defined by setting, for any $1$-dimensional complex vector subspace $\ell \subset V_2/V_1$, $\imath(\ell) = W_{2,\ell}$ where $W_{2,\ell} \subset V_2$ is the unique $(a+1)$-dimensional complex vector subspace in $V_2$ such that $W_{2,\ell} \supset V_1$ and such that $W_{2,\ell}/V_1 = \ell$. The description of $Q_\mu$ for $X = OGr_n$ and $X = LGr_n$ are similar. 
More precisely, {for $r$ fixed and for $\tau \in \mathcal D_s(X), s < r$ and for $\mu \in \mathcal D_s(X), s > r$},  we have Table \ref{table 1}.
\begin{table}[h]
\caption{Subgrassmannians}
\label{table 1}
\begin{tabular}{c|c|c|c}
$X$& $D_r(X)$& $Z_{\tau}~(X_{\tau} = X_{(V_1, V_2)})$& $Q_{\mu} ~(X_{\mu} = X_{(V_1, V_2)})$  \\\hline
$Gr(q,p)$&$Gr(q-r, \mathbb{C}^{p+q}) $& $Gr(q-r, V_1)$&$\{V\in Gr(q-r,V_2)\colon V_1\subset V\}$ \\\hline
$OGr_n$&$OGr(2[ n/2]-2r, \mathbb{C}^{2n})$& $Gr(2[ n/2]-2r, V_1)$&$\{V\in OGr(2[ n/2]-2r, V_1^\perp)\colon V_1\subset V\}$ \\\hline
$LGr_n$&$SGr(n-r, \mathbb{C}^{2n})$& $Gr(n-r, V_1)$ &$\{V\in SGr(n-r, V_1^\perp)\colon V_1\subset V\}$
\end{tabular}
\end{table}\\
In particular, if $\tau\in \mathcal{D}_{r-1}(X)$ and $\mu\in \mathcal{D}_{r+1}(X)$, we have Table \ref{table 2}: 
\begin{table}[h]
\caption{{ Subgrassmannians} when the rank difference { $|s-r|$ equals $1$}}
\label{table 2}
\begin{tabular}{c|c|c}
$X$ & $Z_{\tau}~(X_{\tau} = X_{(V_1, V_2)}) $ & $Q_{\mu}~(X_{\mu} = X_{(V_1, V_2)})$ \\\hline
$Gr(q,p)$ &$Gr(q-r, V_1) { \cong} Gr(1, V_1^*)$  &$V_1\oplus Gr(1, V_2/V_1)$\\\hline
$OGr_n$ &$Gr(2[n/2]-2r, V_1){ \cong} Gr(2, V_1^*)$  &$V_1\oplus OGr(2, V_1^\perp/V_1)$ \\\hline
$LGr_n$  &$Gr({n}-r, V_1){ \cong} Gr(1, V_1^*) $&$V_1\oplus Gr(1, V_1^\perp/V_1)$
\end{tabular}
\end{table}

\noindent
Table 1 above gives in particular for comparison the pairs $(D_r(X),Z_\tau^r)$, where $\tau \in \mathcal D_s(X)$ and the pairs $(D_r(X),Q^r_\mu)$, where $\mu \in \mathcal D_s(X)$, and Table 2 gives the special cases where the gap $|s-r|$ is equal to 1.  In the case of type-II Grassmannians we need to consider in addition $D_{r,\frac{1}{2}}(X)$, $Z^{r,\frac{1}{2}}_\tau$ where $\tau \in \mathcal D_r(X)$, $Z^r_\tau$, where $\tau \in \mathcal D_{r-1,\frac{1}{2}}(X)$, and $Q_{\mu}^{r, \frac{1}{2}}$, where $\mu \in \mathcal D_r(X)$.  If we label $D_t(X)$ as being of level $t$, $D_{t,\frac{1}{2}}(X)$ as being of level $t+\frac{1}{2}$, $Z_\tau^r$ for $\tau \in D_t(X)$ as being of level $t$, $Z^r_\tau$, $\tau \in \mathcal D_{t,\frac{1}{2}}(X)$ as being of level $t + \frac{1}{2}$, and  $Q_{\mu}^{r,\frac{1}{2}}$, where $\mu \in \mathcal D_t(X)$, as being of level $t + \frac{1}{2}$, then we will need to consider for comparison the pairs $(D_{r,\frac{1}{2}}(X),Z^{r,\frac{1}{2}}_\tau)$, where $\tau \in D_r(X)$, the pairs $(D_r(X),{Z^r_\tau})$, where $\tau \in \mathcal D_{r-1,\frac{1}{2}}(X)$, and the pairs $({D_{r,\frac{1}{2}}(X)},Q^{r,\frac{1}{2}}_\tau)$, where $\tau \in \mathcal D_r(X))$.  These are pairs $(A,B)$, where the gap of the levels of $A$ and $B$ are equal to $\frac{1}{\phantom{.}2\phantom{.}}$.  For this purpose we have the data given by following Table 3, noting that for type-II Grassmannians we have $\mathcal D_{r,\frac{1}{2}}(X) = Gr(2[\frac{n}{2}]-2r-1, \mathbb C^{2n})$.  To be consistent with the other tables, we drop the reference to $r$ in the table. 

\begin{table}[h]
\caption{{Subgrassmannians when $X$ is of type II} and the gap is
$\frac{1}{\,2\,}$}
\label{table 3}
\begin{tabular}{c|c|c|c}
$X$ 
& $Z_{\tau}^{\frac12}(X_{\tau} = X_{(V_1, V_2)},\tau\in D_r(X)) $ 
& $Z_\tau$ $(X_\tau = X_{(V_1, V_2)}, \tau\in D_{r-1, \frac12}(X))$
& $Q_{\mu}^{\frac12}~(X_{\mu} = X_{(V_1, V_2)})$ \\\hline
$OGr_n$ 
&$Gr(2[\frac{n}{2}]-2r-1, V_1){ \cong} Gr(1, V_1^*)$  
&$Gr(2[\frac{n}{2}]-2r, V_1)\cong Gr(1,V_1^*)$
&$V_1\oplus OGr(1, V_1^\perp/V_1)$ \\
\end{tabular}
\end{table}


Let $X=G/P$, where $G$ is one of the complex simple Lie groups  
$SL(q+p, \mathbb C){/ \mu_{p+q}I_{p+q}}$, ${SO(2n, \mathbb C) /\{\pm I_{2n}\}}$ or $Sp(n,\mathbb C){ /\{\pm I_{2n}\}}$ {according to} the type of $X$ and $P$ is a maximal parabolic subgroup of $G$. 
Then $\mathcal D_r(X)$ and $D_r(X)$ are biholomorphic to $G/P', G/P''$ with parabolic subgroups $P', P''$ of $G$ and 
{their} automorphism groups are exactly $G$ if $r\neq 0$ (see Section 3.3 in \cite{Akhiezer}).
In particular, $\mathcal D_r(X)$ and $D_r(X)$ are rational homogeneous {spaces}.

{We say that $D_r(X)$ is {\it connected by chains} of $Z_\tau$ with $\tau\in \mathcal D_{r-1}$ if, for any two points $A$, $B$ in $D_r(X)$, there exist $\tau_1,\ldots, \tau_k\in\mathcal D_{r-1}(X)$ for some $k$, 
such that $Z_{\tau_i}\cap Z_{\tau_{i+1}}\neq \emptyset$ for all $i=1,\ldots, k-1$ and $A\in Z_{\tau_1}$, $B\in Z_{\tau_k}$.
A similar definition can be applied to chains of $Q_\mu$ with $\mu\in \mathcal D_{r+1}(X)$ and chains of $Z_\tau^{\frac{1}{2}}$ with $\tau\in \mathcal D_{r-1,\frac{1}{2}}(X)$.}

\bl
${D}_r(X)$ is connected by chains of $Z_{\tau}$ with $\tau\in \mathcal{D}_{r-1}(X)$ {and} chains of $Q_{\mu}$ with $\mu\in \mathcal{D}_{r+1}(X)$.
If $X$ is of type II,
${D}_{r,\frac12}(X)$ is connected by chains of $Z_{\tau}^{\frac12}$ with $\tau\in \mathcal{D}_{r}(X)$ and $D_{r}(X)$ is connected by chains of $Z_\tau$ with $\tau\in \mathcal D_{r-1,\frac12}(X)$

\el

\begin{proof} 
We will prove the lemma when $X$ is of the type I. The same argument can be applied to other cases.
Let  $X=Gr(q,p)$.
Then $\mathcal D_r(X) = \mathcal F (q-r, p+r;\mathbb C^{p+q})$ and $D_r(X) = Gr(q-r, \mathbb C^{p+q})$ by Table~\ref{table 1}.
For two distinct points $x_0, x_1 \in {D_r(X)}$, choose a sequence $V_0,\ldots,V_m\in D_{r}(X)$ such that
$$ x_0= V_0,~ x_1= V_m, ~ \dim(V_{i-1}\cap V_{i})=q-r-1, ~i=1,\ldots m.$$
Define
$$ W_i=V_i+V_{i+1},\quad i=0,\ldots, m-1.$$
Then $x_0$ and $x_1$ are connected by the chain of {$Z_{\tau_i}=Gr(q-r, W_i)$, $1 \le i \le m$}, and by the chain of {$Q_{\mu_i}=(V_i\cap V_{i+1})\oplus Gr(1, W_i/(V_i\cap V_{i+1}))$, $0 \le i \le m-1$}.
\end{proof}

Define
$$
	\Sigma_r:=
	 	pr(\mathcal{D}_r(S_r)) .
$$
By Lemma~\ref{null}, we obtain
$$\Sigma_r =D_r(X)\cap \{ V\in Gr(a, V_X) :   I_{p,q}|_{V} =0 \}$$
for some suitable $a$ and $I_{p,q}$.

\bl\label{one to one}
The closed submanifold $\Sigma_r \subset D_r(X)$ inherits from $D_r(X)$ the structure of a {Levi-}nondegenerate homogeneous CR manifold  {whose Levi form has eigenvalues of both signs} such that $pr: \mathcal D_r(S_r) \to \Sigma_r$ is a CR diffeomorphism.
\el

\begin{proof}
{In \cite[Section 2]{K21}, It was shown that $\Sigma_r$ has the structure of a Levi-nondegenerate CR manifold whose Levi form has eigenvalues of both signs.} 
We only need to show that $pr$ is one to one since it is smooth and regular.
Let $\sigma\in \mathcal D_r(S_r)$. Then $\sigma$ is expressed by the set of $q$-planes $x$ satisfying $$
{\rm Span}_\mathbb C\{e_{p+1} \wedge \cdots \wedge e_{p+q-r}\} \subset x \subset
{\rm Span}_\mathbb C\{e_{q-r} \wedge \cdots \wedge e_{p+q}\}.
$$
Therefore $\sigma$ is determined uniquely by the $I_{p,q}$-isotropic space $\mathbb C e_{p+1}\wedge \cdots \wedge e_{p+q-r}$ by Lemma~\ref{null}. 
\end{proof}

\bl\label{Z-tau-rank one}
Let $s<r$ and let $\tau\in \mathcal D_s(X)$. Then $\tau\in \mathcal{D}_{s}(S_{s})$ if and only if $Z_{\tau}\subset \Sigma_r$. 
\el

\begin{proof}
We only consider the case {where} $X=Gr(q,p)$. The same argument can be applied to $X=OGr_n$ or $X=LGr_n$.
Let $\tau\in \mathcal D_s(S_s)$. We may express $X_\tau$ as $X_{(W_1, W_2)}$ with $I_{p,q}$-isotropic $(q-s)$-dimensional subspace 
$W_1$ and $(p+s)$-dimensional subspace $W_2$. Then any element $X_{(V_1, V_2)}$ in $\mathcal Z_{\tau}$ satisfies
$V_1\subset W_1$. Hence we obtain $Z_{\tau} \subset \Sigma_r$. Conversely, $W_1$ is spanned by $\{V_1:V_1\subset W_1\}$ and if $W_1$ is not a null space of $I_{p,q}$, then there exists $V_1\subset W_1$ of dimension $(q-r)$ such that $I_{p,q}\big|_{V_1}\neq 0$, i.e., 
$pr({\sigma})\not\in \Sigma_r$ for $pr(\sigma)=V_1\in Z_{\tau}$. 
\end{proof}

\bl\label{Q-Sigma}
Let $X=Gr(q,p)$ or $LGr_n$.
If $\mu\in \mathcal{D}_{r+1}(S_{r+1})$, then $Q_{\mu}\cap \Sigma_r$ is a real hyperquadric in $Q_{\mu}$.
\el

\begin{proof}
If $X=Gr(q,p)$, we may express $X_\mu$ as $X_{(W_1, W_2)}$ with $I_{p,q}$-isotropic $(q-r-1)$-dimensional subspace 
$W_1$ and $(p+r+1)$-dimensional subspace $W_2$. Hence any element in $Q_{\mu}\cap \Sigma_r$ can be 
represented by a vector $w\in W_2 /W_1$ satisfying $I_{p,q}|_{W_1 \wedge w} = 0$.
We can apply the same argument to the case {where} $X=LGr_n$.
\end{proof}

Let $r$ be fixed. Since a maximal integral manifold of the CR bundle $T^{1,0}\Sigma_r$ is a maximal complex submanifold of $\Sigma_r$, by Section~3 in \cite{K21}, we obtain that
$Z_{\tau},~\tau\in \mathcal{D}_0(S_0)$, is a maximal complex {manifold} in $\Sigma_r$ and vice versa.

\bl\label{HSS in Sigma}
Let $X=Gr(q,p)$. Then $\Sigma_r$ is covered by {Grassmannians} of rank 
$\min(r, q-r)$.
\el
\bpf
Choose a point $x\in \Sigma_r$. Then there exists a $(q-r)$-dimensional $I_{p,q}$-isotropic vector space $V_x$ representing $x$. Choose a $q$-dimensional $I_{p,q}$-isotropic space $W_x$ that contains $V_x$.
Then $Gr(q-r, W_x)$ is a subgrassmannian of rank $\min(r, q-r)$ in $\Sigma_r$ passing through $x$. 
\epf

{ Let $X=Gr(q,p)$ so that $D_r(X)=Gr(q-r, \mathbb C^{p+q})$. In Harish-Chandra coordinates $\{(x;y;z)$; { $x,\in M^{\mathbb C}(r, q-r), ~y\in M^{\mathbb C}(q-r,q-r)$, $z\in M^{\mathbb C}(p-q+r, q-r)\}$ on a big Schubert cell of $Gr(q-r,\mathbb C^{p+q})$, 
$\Sigma_r$ is defined by
\begin{equation*}
	I_{q-r}+x^*x-y^*y-z^*z=0,
\end{equation*}
where $x^*=\bar x^t$ and so on. At $P=(0;I_{q-r};0)\in \Sigma_r$, the real tangent space $T^{\mathbb R}_P\Sigma_r$
is defined by
$$dy_i^{~j}+d \bar y_j^{~i}=0,\quad i,j=1,\ldots,q-r$$
and the complex tangent space
$T^{1,0}_P\Sigma_r$ is defined by
$$dy_i^{~j}=0,\quad i,j=1,\ldots,q-r.$$
Therefore the real dimension of $\Sigma_r$ is $2(q-r)(p+r)-(q-r)^2$ and the CR dimension of $\Sigma_r$ is $(q-r)(p+r)-(q-r)^2$. Furthermore, for {the} complex structure $J$ of $D_r(X)$, we obtain
$$J(T^{\mathbb R}_P\Sigma_r)=\{dy_i^{~j}-d\bar y_{j}^{~i}=0\},$$
and hence $\Sigma_r$ is a generic CR manifold in $ D_r(X)$.
A maximal complex manifold $M$ in $\Sigma_r$ passing through $P$ should satisfy the system
$$dx^*\wedge dx-dy^*\wedge dy-dz^*\wedge dz=0.$$
Therefore on $T_PM$, we obtain
$$ dy=0$$
and
$$dx^*\wedge dx-dz^*\wedge dz=0.$$ 
Hence maximal complex manifolds in $\Sigma_r$ passing through $P$ 
are locally equivalent to 
\beq\label{max com}
\left \{ ( x;I_{q-r};Ax):x\in M^{\mathbb C}_{r, q-r}\right\}
\eeq
for a $(p-q+r)\times r$ matrix $A$ such that $A^*A=I_r$.
\medskip

{The} contact form $\theta$ on a CR manifold $S$ is a matrix{-}valued $\mathbb C$-linear one{-}form on the complexified tangent bundle of $S$ such that 
$$ker(\theta)=T^{1,0}S+T^{0,1}S,$$
where $T^{1,0}S$ is the CR bundle and $T^{0,1}S=\overline {T^{1,0}S}$.
}}

\bl\label{bracket gen} 
The CR structure of $\Sigma_r$ is Levi-nondegenerate. Furthermore, the 
CR structure of $\Sigma_r$ is bracket generating in the sense that for any nonzero real tangent vector $v$, there exist two $(1,0)$ vectors $w_1$, $w_2$ such that 
$\theta\wedge d\theta(v, w_1, \overline w_2)\neq 0$, {where $\theta$ is a contact form {on} $\Sigma_r$.}
\el

\bpf
For the CR structure of $\Sigma_r$ when $X$ is of type~I, see \cite{K21}. In the proof, we only consider $X=LGr_n$. The same argument can be applied for $X=OGr_n$.
Let $X=LGr_n$ and hence $D_r(X)=SGr(n-r, \mathbb{C}^{2n})$. We regard $D_r(X)$ as a submanifold in $Gr(n-r, \mathbb C^{2n})$.
Since everything is purely local, we can choose {Harish-Chandra} coordinates $(x;y;z)$; { $x, z\in M^{\mathbb C}(r, n-r), ~y\in M^{\mathbb C}(n-r,n-r)$, on a big Schubert cell $\mathcal W \subset Gr(n-r,\mathbb C^{2n})$, where $\mathcal W$ is identified with $M^\mathbb C(n+r,n-r) =  M^\mathbb C(r,n-r) \oplus M^\mathbb C(n-r,n-r) \oplus M^\mathbb C(r,n-r)$;
 and $\mathcal W \cap SGr(n-r,\mathbb{C}^{2n})$ is defined by}
\beq\label{1}
	y-y^t+x^tz-z^tx=0
\eeq
{since an $(n-r)$-plane in $\mathcal W$ lies in $SGr(n-r, \mathbb{C}^{2n})$ if and only if it is isotropic with respect to the symplectic form $J_n$ on $\mathbb C^{2n}$, and $\mathcal W \cap \Sigma_r$ is defined by \eqref{1} and}
\beq\label{2}
	I_{n-r}+x^*x-y^*y-z^*z=0,
\eeq
where $x^*=\bar x^t$ and so on, {since $\mathcal W \cap \Sigma_r \subset \mathcal W \cap SGr(n-r,\mathbb C^{2n})$ and it consists precisely of $(n-r)$-planes therein isotropic with respect to the indefinite Hermitian bilinear form $I_{n,n}$ on $\mathbb C^{2n}$}. Fix $P = (0;I_{n-r};0)$.  Then,
$$
 T_P D_r(X)=\{dy-dy^t=0\}
$$
and
$$
T_P\Sigma_r=\{dy-dy^t=dy+dy^*=0\}.
$$ 
Therefore we obtain
\beq\label{generic cr}
T_PD_r(X)=T_P\Sigma_r+J(T_P\Sigma_r),
\eeq
where $J$ is the complex structure of $D_r(X)$. Since $\Sigma_r$ is homogeneous, \eqref{generic cr} holds for any $P\in \Sigma_r$, i.e., $\Sigma_r$ is a generic CR manifold in $D_r(X)$.

Now choose $\tau\in \mathcal{D}_0(S_0)$ such that $P\in Z_{\tau}$. By Lemma~\ref{Z-tau-rank one}, we obtain $Z_\tau\subset \Sigma_r$ and hence 
$$T_PZ_\tau\subset T^{1,0}_P\Sigma_r=\{dy=0\}.$$	
On the other hand, at $P=(0;I_{n-r};0)$, subgrassmannians of the form  
$\left\{(x;I_{n-r};Ax) : x\in M_{r,n-r}^{\mathbb{C}}\right \}$ or $\left\{(Az;I_{n-r};z) : z\in M_{ r, n-r}^{\mathbb{C}}\right \}$ with $r\times r$ symmetric matrices $A$ are contained in $\Sigma_r$, which implies 
$$
	{\rm Span}_\mathbb C\left\{ \bigcup_{\tau}T_PZ_\tau\right\}=\{dy=0\},
$$
where the union is taken over all $\tau\in \mathcal D_0(S_0)$ such that $Z_\tau\ni P$.

Let 
$$
	\theta  := x^*d x - y^*dy - z^*dz,
	\quad
	\text{and}
	\quad
	\tilde \theta:=dy+x^tdz-z^tdx.
$$
Then, $\theta$ is a skew-Hermitian contact form on $\{ I_{n-r}+x^*x-y^*y-z^*z=0\}$ and $\tilde\theta$ is a symmetric one form on $D_r(X)$ (by equation \eqref{1}). Moreover, since $J_n=0$ on $\tau$ and $P\in Z_\tau$ if and only if $P\subset \tau$ as subspaces of $V_X$, by differentiating 
$$J_n(v,w)=0,\quad v\subset P,~w\subset \tau,$$
we obtain
$$T_PZ_\tau\subset \{\tilde\theta=0\}$$
for all $Z_\tau,~\tau\in \mathcal D_0(\Sigma)$ with $P\in Z_\tau.$
Hence, by the same argument as above, we can show that $\theta$ and $\tilde\theta$
{together} define the CR structure on $\Sigma_r$.
Notice that at $P=(0;I_{n-r};0)$, 
$$
	\tilde\theta\wedge d\tilde\theta=dy\wedge(dx^t\wedge dz-dz^t\wedge dx)
$$
on $T_PD_r(X)$ and hence {the proof is completed}.
\epf

\section{Rigidity of the pair $(SGr(q,\mathbb C^{2n}), Gr(q, \mathbb C^{2n})$)}\label{Rigidity of the pair}
We consider the question of rigidity for mappings for the pair $(X,X')$, where $X$ is {the} symplectic Grassmannian  $SGr(q,\mathbb C^{2n})$, $2 \le q \le n$, {$X'$ is the Grassmannian $Gr(q,\mathbb C^{2n})$}, {and $X$ is} identified with its image inside $X'$ by a standard embedding in the obvious way. 

{ The framework for formulating the rigidity problem above is the geometric theory of uniruled projective manifolds $X$ based on the study of varieties of minimal rational tangents (cf. \cite{HM98}, \cite{HM99}, \cite{M08b}, \cite{M16}, \cite{MZ19}).    From Mori theory there exists on $X$ a non-constant parametrized rational curve $f_0: \mathbb P^1 \to X$ which is free (i.e., $f_0^*TX \ge 0$ in the sense that $f_0^*TX$ decomposes into a direct sum of holomorphic line bundles of degree $\ge 0$ on $\mathbb P^1$) such that deformations of the cycle $[f_0(\mathbb P^1)]$ cannot split into two irreducible components at a general point $x \in X$. The space consisting of $f_0$ and its deformations $f$ as free rational curves, modulo the natural action by ${\rm Aut}(\mathbb P^1) \cong \mathbb PSL(2,\mathbb C)$, given by $(f,\varphi) \mapsto f\circ\varphi$ for $\varphi \in {\rm Aut}(\mathbb P^1)$, defines a minimal rational component $\mathcal K$, and a member $[f] \in \mathcal K$ is called a minimal rational curve.  We specialize to the case where $X$ is of Picard number 1, in which case $X$ is necessary Fano. In what follows when we speak of minimal rational curves and minimal rational components we will make the more restrictive assumption that ${\rm deg}(f_0^*TX)$ is minimal among all free parametrized rational curves on $X$. 

There is a smallest subvariety $B \subsetneq X$ such that for $x \in X-B$ the space $\mathcal K_x \subset \mathcal K$ of minimal rational curves  passing through $x$ is compact. We call $B \subset X$ the bad set of $(X,\mathcal K)$. For a general point $x \in X$ by the variety of minimal rational tangents $\mathscr C_x(X)$ we mean the Zariski closure (equivalently topological closure) of the set of all tangents $[df(0)(T{\mathbb P^1})] \in \mathbb PT_x(X)$ of (parametrized) minimal rational curves belonging to $\mathcal K$ such that $f(0) = x$ and $f$ is immersed at 0.  By Kebekus~\cite{Ke02}, at a general point $x \in X$ every minimal rational curve belonging to $\mathcal K$ and passing through $x$ is immersed, at (each branch passing through) the point $x$, so that it is not necessary to take Zariski closure in the definition of $\mathscr C_x(X)$.}   

{ The rigidity results in this section and in Section~\ref{Rigidity of subgrassmannian respecting holomorphic maps} will be used to show the rigidity of the induced moduli map $f^\flat_r$ (or its analogue) in Section~\ref{Rigidity of the induced moduli map}.}  Here by rigidity of the pair $(SGr(q,\mathbb C^{2n}), Gr(q, \mathbb C^{2n}))$ we will mean a form of rigidity weaker than the notion of rigidity of an admissible pair $(X,X')$ as was defined in \cite{MZ19} but which is nonetheless sufficient for our purpose {(cf. Proposition~\ref{standard})}. In a nutshell the support $S \subset X'$ of the sub-VMRT structure we consider comes from a holomorphic embedding ${H: U \to X'}$ on some nonempty connected open subset $U \subset X$, which, owing to the specific way that {$H$} is defined starting with a proper holomorphic map $f:\Omega \to \Omega'$, can be proven by means of CR geometry to transform any connected open subset of a minimal rational curve into a minimal rational curve {(as is given in the proof of Lemma~\ref{one jet of H} for the case of $(X,X')$), from which it follows that $H$ admits a rational extension by the proof of \cite[Theorem 1.1]{HoM10} of non-equidimensional Cartan-Fubini extension (cf. proof of Proposition~\ref{immersion}). One may say that we are proving more precisely rigidity of the triple $(SGr(q,\mathbb C^{2n}), Gr(q, \mathbb C^{2n}); H)$.}

{The main result of this section is Proposition~\ref{standard} proving that for a VMRT-respecting holomorphic map $H: U \to X'$ defined on a nonempty connected open subset $U \subset X$ modeled on the pair {$(X,X')$} of rational homogeneous manifolds of Picard number 1, {i.e., $H^1(X,\mathcal O^*) \cong \mathbb Z, H^1(X',\mathcal O^*) \cong \mathbb Z$}, which is { known} to extend to a rational map {$H: X \dashrightarrow X'$} (where by abuse of notation we use the same symbol $H$ to denote both the originally defined map on $U$ and its rational extension to $X$), the extended map is actually a standard holomorphic embedding {$H: X \to Y$ of $X$ onto some complex submanifold $Y \subset X'$}, i.e., it is the obvious embedding $\imath: SGr(q,\mathbb C^{2n}) \to Gr(q,\mathbb C^{2n})$ up to automorphisms of both the domain and the target manifolds.

The problem for the case of the pair $(SGr(q,\mathbb C^{2n}),Gr(q,\mathbb C^{2n})), n \ge 2, SGr(n,\mathbb C^{2n}) = LGr_n$, the Lagrangian Grassmannian of rank $n$, has been settled in \cite{M19} in which it was proven that the admissible pair of compact Hermitian symmetric spaces $(LGr_n,Gr(n,n))$, which is of non-subdiagram type, is rigid in the sense of the geometric theory of sub-VMRT structures.  Here for the purpose of our application to Theorem 1.2, the map $H$ arises from a proper holomorphic map $f: D^{\rm III}_n \to D^I_{n,n}$, and we will be able to establish that $H$ extends to a holomorphic map from $SGr(q,\mathbb C^{2n})$ into $Gr(q,\mathbb C^{2n})$, and we deal in this section with the question whether $H: SGr(q,\mathbb C^{2n}) \to Gr(q,\mathbb C^{2n})$} is a standard embedding.  

For the purpose of showing that $H$ is a standard embedding, we generalize certain arguments in \cite{M19} for the pair $(LGr_n,Gr(n,n))$ to our situation.
Here we will recall some basic notions from the theory of sub-VMRT structures in order to be able to apply the argument of {\it parallel transport} along minimal rational curves as in \cite{M19}.  As opposed to the Lagrangian Grassmannian, the problem for parallel transport on symplectic Grassmannian ${X} = SGr(q,\mathbb C^{2n})$ for $2 \le q < n$ exhibit new difficulties. 

{
The problem of rigidity of an admissible pair $(X,X')$ is first of all related to the Recognition Problem of $X$. To put things in perspective, let us recall the Recognition Problem for a rational homogeneous space $X = G/P$ of Picard number 1.  Let $\mathcal K$ be the unique minimal rational component on $X = G/P$.  The VMRTs $\mathscr C_x(X)$ at all points $x\in X$ are equivalent to each other in the following sense. Take $0 = eP \in X$ as a reference point.  Then, for every point $x \in X$, the inclusion $\mathscr C_x(X) \subset \mathbb PT_x(X)$ is projectively equivalent to the inclusion $\mathscr C_0(X) \subset \mathbb PT_0(X)$ in the sense that there exists a projective linear isomorphism $\Lambda: \mathbb PT_0(X) \to \mathbb PT_x(X)$ such that $\Lambda(\mathscr C_0(X)) = \mathscr C_x(X)$. We say that the Recognition Problem for $X$ is solved in the affirmative if and only if the following statement $(\dagger)$ holds true:
    $(\dagger)$  {\it Let $(Y,\mathcal H)$ be a Fano manifold of Picard number 1 equipped with a minimal rational component $\mathcal H$, and denote by $\mathscr C_y(Y)$ the VMRT of $(Y,\mathcal H)$ at a general point $y \in Y$.  Suppose for a general point $y \in Y$ the inclusion $\mathscr C_y(Y) \subset \mathbb PT_y(Y)$ is projectively equivalent to $\mathscr C_0(X) \subset \mathscr C_0(X).$ Then, $Y$ is biholomorphically equivalent to $X$.} 
We note that although the Recognition Problem is stated here for the case where $Y$ of Picard number 1, the known (partially) affirmative solutions (cf. Theorem~\ref{hwangli}) apply even without the Picard number 1 condition on $Y$ to give an open VMRT-respecting embedding into $X$ of some sufficiently small neighborhood $\mathcal U$ (in the complex topology) of a general minimal rational curve $\ell \subset Y$.  It turns out that, coupled with the extension theorem for { sub-VMRT} structures (from \cite[Main Theorem 2]{MZ19}) and the Thickening Lemma (Theorem~\ref{thickening} here), this is enough for our application to solve in the affirmative the rigidity problem of the pair $(SGr(q,\mathbb C^{2n}), Gr(q, \mathbb C^{2n})), n \ge 2$.

The parabolic subgroup $P \subset G$ is determined by the marking of a single node $\alpha$ of the Dynkin diagram $\mathfrak D(G)$ of $G$. When the node $\alpha$ is a long root (resp. short root), we will call $X = G/P$ a rational homogeneous manifold of Picard number 1 associated to a long root (resp. short root).  For instance, when $G$ is of $A$, $D$ or $E$ type, all simple roots are of the same length, { hence} $X = G/P$ is always associated to a long root.  We call the Recognition Problem for $X = G/P$ the long-root case (resp. short-root case) when $X = G/P$ is associated to a long root (resp. short root).  The long-root case of the Recognition Problem was solved in the affirmative in the cases where $X = G/P$ is Hermitian symmetric or contact homogeneous by Mok \cite{M08d} and by Hong-Hwang \cite{HH08} for the rest of the long-root cases.  
  
{ We return now to our situation of the pair $(SGr(q,\mathbb C^{2n}),Gr(q,\mathbb C^{2n}))$, $n >2$, where we need first of all to deal with the Recognition Problem for $X = SGr(n,\mathbb C^{2n})$, $n > 2$.  Here $X = G/P$ where $G$ is the automorphism group of $(\mathbb C^{2n},\sigma)$, where $\sigma$ is a (complex) symplectic form on $\mathbb C^{2n}$, in other words the complex Lie group ${\rm Sp}(n,\mathbb C)$ of symplectic transformations.  
The Dynkin diagram of its Lie algebra $\mathfrak{sp}(n,\mathbb C)$ is $C_n$, consisting of $n$ simple roots 
$\alpha_1,\alpha_2,\cdots,\alpha_{n-1},\alpha_n$, where $\alpha_1$ and $\alpha_n$ are long roots, and $\alpha_2,\cdots,\alpha_{n-1}$ are short roots. We have $SGr(q,\mathbb C^{2n}) = G/P$, where $\mathfrak{p} \subset \mathfrak{g}$ is the parabolic subalgebra corresponding to the $q$-th node $\alpha_q$, which is a short root since by definition $2 \le q < n$.}
  
First of all, $X$ is marked at a short root, and the Recognition Problem for $X$ is much harder than the long-root case.
Fortunately, the Recognition Problem has recently been settled by \cite{HL21}, which, together with the Thickening Lemma, allows us to analytically continue $H$ along certain minimal rational curves. (It should be { noted that}, as will be explained later, the Recognition Problem is not solved in the affirmative as stated above, but an additional invariant needs to be determined in order for us to assert that $Y$ is biholomorphically equivalent to $X$ in the notation of the third last paragraph.) }Secondly, the moduli space of minimal rational curves on {$X$} is no longer homogeneous, and for our purpose arguments by parallel transport along minimal rational curves can only be carried out for general minimal rational curves, but we show that it is nonetheless sufficient to prove that the extended { rational} map {$H: X \dashrightarrow X'$} has no indeterminacies and is in fact a holomorphic immersion.  

Local calculations in terms of Harish-Chandra coordinates to be deferred to Section~\ref{Rigidity of subgrassmannian respecting holomorphic maps} allow us to show that {$H: X \to X'$} can be dilated via $\mathbb C^*$-action to a standard embedding, and the homotopy and cohomological arguments (involving volume forms) as in \cite{M19} allows us to recover $H$ as the obvious embedding up to {automorphisms} of the domain and target manifolds.

We now consider the pair ${(X,X')} = (SGr(q,\mathbb C^{2n}), Gr(q,\mathbb C^{2n}))$, $2 \le q \le n$ from the perspective of the geometric theory of sub-VMRT structures.
The obvious inclusion map $\imath\colon X \hookrightarrow X'$ sends minimal rational curves onto minimal rational curves, and we have $\imath_*\colon H_2(X,\mathbb Z) \overset{\cong}\longrightarrow H_2(X',\mathbb Z)$. We identify $X'$ as a projective submanifold by means of the Pl\"ucker embedding $\nu\colon Gr(q, \mathbb C^{2n}) \hookrightarrow \mathbb P^N$, $N+1 = \dim_\mathbb C \bigwedge^q\big(\mathbb C^{2n}\big) = \frac{(2n)!}{q!\cdot (2n-q)!}$.  To relate to the theory of sub-VMRT structures as given in \cite{MZ19} and \cite{M19} we have first of all  

\begin{Lem}\label{admissible}
In the notation above $(X,X')$ is an admissible pair of rational homogeneous manifolds of Picard number $1$ in the sense of \cite{MZ19} which is of non-subdiagram type.
\end{Lem}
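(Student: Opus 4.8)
The plan is to verify directly the defining conditions of an admissible pair of rational homogeneous manifolds of Picard number $1$ in the sense of \cite{MZ19}, and then separately to rule out the subdiagram type. First I would record the two homogeneous presentations: $X' = Gr(q,\mathbb C^{2n})$ is $SL(2n,\mathbb C)/P$ obtained by marking the $q$-th node of the diagram $A_{2n-1}$, while $X = SGr(q,\mathbb C^{2n})$ is $Sp(n,\mathbb C)/P'$ obtained by marking the $q$-th node of $C_n$. In both cases exactly one node is marked, so $b_2(X) = b_2(X') = 1$ and both are of Picard number $1$, and the inclusion $\imath$ is equivariant with respect to the standard inclusion $Sp(n,\mathbb C)\hookrightarrow SL(2n,\mathbb C)$.

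Next I would identify the two VMRTs explicitly. At a point $[V]\in X'$ one has $T_{[V]}(X') = V^*\otimes(\mathbb C^{2n}/V)$, with $\mathscr C_{[V]}(X')$ the Segre variety of rank-one homomorphisms, i.e. the image of $\mathbb P V^*\times \mathbb P(\mathbb C^{2n}/V)$ under $([\xi],[\bar u])\mapsto [\xi\otimes \bar u]$. Using the symplectic form to split $\mathbb C^{2n}/V = (V^\perp/V)\oplus(\mathbb C^{2n}/V^\perp)$ and to identify $\mathbb C^{2n}/V^\perp\cong V^*$, one obtains $T_{[V]}(X) = \big(V^*\otimes(V^\perp/V)\big)\oplus S^2V^*$. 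A minimal rational curve of $X$ through $[V]$ is cut out by a pair $(V',V'')$ with $V'\subset V\subset V''$, $\dim V' = q-1$, $\dim V'' = q+1$ and $V''\subset (V')^\perp$; computing its tangent direction gives a class of the form $[\,\xi\otimes \bar u_1 + c\,\xi\otimes\xi\,]$, where $\xi\in V^*$ is the functional with kernel $V'$, $\bar u_1\in V^\perp/V$ and $c\in\mathbb C$. Comparing with the Segre, a rank-one tensor $\xi\otimes\bar u$ lies in $T_{[V]}(X)$ precisely when the $V^*$-component of $\bar u$ is proportional to $\xi$, whence $\mathscr C_{[V]}(X) = \mathscr C_{[V]}(X')\cap \mathbb P T_{[V]}(X)$. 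In particular minimal rational curves of $X$ are minimal rational curves of $X'$, consistent with the isomorphism $\imath_*\colon H_2(X,\mathbb Z)\xrightarrow{\ \cong\ } H_2(X',\mathbb Z)$ already noted, so $\imath$ sends lines to lines and the VMRT-compatibility required of an admissible pair holds.

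It then remains to check the nondegeneracy conditions built into the definition of admissibility. Here $\mathscr C_{[V]}(X)$ is the irreducible, smooth $\mathbb P^{2n-2q}$-bundle over $\mathbb P V^*$ described above, of dimension $2n-q-1$, and one verifies that it is linearly nondegenerate in $\mathbb P T_{[V]}(X)$ and satisfies the nondegeneracy-for-substructures requirements. For this I would follow the computations of \cite[Section 6]{M19} for the Lagrangian case $q=n$, where $V^\perp/V = 0$ and $\mathscr C_{[V]}(X)$ reduces to the Veronese $v_2(\mathbb P V^*)$; the additional summand $V^*\otimes(V^\perp/V)$ present for $q<n$ contributes only a flat, subdiagram-like factor and does not affect these verifications, since the relevant curvature is concentrated in the $S^2V^*$ direction.

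Finally, to see that $(X,X')$ is not of the subdiagram type, observe that a standard embedding of the subdiagram type would present $(C_n,\alpha_q)$ as a marked subdiagram of $(A_{2n-1},\alpha_q)$; but every subdiagram of the simply-laced diagram $A_{2n-1}$ is again simply laced, whereas $C_n$ carries a double edge. Equivalently, the semisimple groups arising from subdiagrams of $A_{2n-1}$ are products of special linear groups and can never be $Sp(n,\mathbb C)$, and geometrically the second fundamental form of $\mathscr C_{[V]}(X)\hookrightarrow \mathscr C_{[V]}(X')$ is nonzero because of the Veronese map $\xi\mapsto \xi\otimes\xi$ in the $S^2V^*$ direction, which is exactly the hallmark of a non-subdiagram pair. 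I expect the main obstacle to be the verification of the full set of nondegeneracy conditions of \cite{MZ19} in the range $2\le q<n$: unlike the Lagrangian case treated in \cite{M19}, the VMRT $\mathscr C_{[V]}(X)$ is no longer a Veronese variety but a projective bundle over one, so the second fundamental form and the relevant nondegeneracy statements must be recomputed for the full family rather than read off from the homogeneous Lagrangian model.
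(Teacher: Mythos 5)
Your argument is essentially correct in its verifications, but it takes a genuinely different and considerably longer route than the paper's. The paper reduces the whole of admissibility to a single algebraic fact: $X=SGr(q,\mathbb C^{2n})$ is a \emph{linear section} of $X'=Gr(q,\mathbb C^{2n})$ under the Pl\"ucker embedding, namely the zero locus of the contraction $\mu\colon \bigwedge^{q}\big(\mathbb C^{2n}\big)\to\bigwedge^{q-2}\big(\mathbb C^{2n}\big)$ obtained by skew-symmetrizing $u_1\otimes\cdots\otimes u_q\mapsto J_n(u_1,u_2)\,u_3\otimes\cdots\otimes u_q$; one checks that $\mu(u_1\wedge\cdots\wedge u_q)=0$ exactly when ${\rm Span}_{\mathbb C}\{u_1,\ldots,u_q\}$ is $J_n$-isotropic. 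Since a line in $\mathbb P^N$ is determined by a point and a tangent direction, a smooth linear section of a projective manifold uniruled by lines automatically contains every ambient line tangent to it, so all the conditions you verify by hand --- the equivariant embedding, $\imath_*$ an isomorphism on $H_2$, lines to lines, and $\mathscr C_{[V]}(X)=\mathscr C_{[V]}(X')\cap\mathbb PT_{[V]}(X)$ --- follow at once. Your explicit identification of the two VMRTs is correct and agrees with Lemma~\ref{symplectic-vmrt}, and your non-subdiagram argument (no subdiagram of the simply laced $A_{2n-1}$ can be $C_n$) is equivalent to the paper's observation that subdiagrams of a marked Grassmannian diagram only produce Grassmannians. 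Be aware, however, that the linear-section statement is not merely a shortcut: it is quoted again in the proof of Proposition~\ref{immersion} in order to invoke the Condition (T) lemma for linear sections, so your route would leave that later step without its needed input.

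One point needs correcting. The ``nondegeneracy conditions'' that you defer to \cite{M19} and single out as the main obstacle are \emph{not} part of the definition of an admissible pair in \cite{MZ19}; nondegeneracy for substructures is a separate hypothesis (\cite[Definition 3.1]{MZ19}) entering the rigidity theorems, not the admissibility of the pair --- compare Remark~\ref{remark_section4}(a), where exactly this extra verification is identified as what would be needed to upgrade the results of this section to full rigidity of the pair, and is deliberately not carried out. So the step you flag as incomplete is not required for this lemma; as written, your proof asserts an obligation that it then fails to discharge, and this should either be removed or explicitly recognized as irrelevant to the statement being proved.
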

\begin{proof}
To prove that the pair $(X,X')$ is an admissible pair of rational homogeneous manifolds of Picard number 1 in the sense of \cite{MZ19}, it suffices to show that $X$ is a linear section of $X' \subset \mathbb P^N$.

Denote by {$J_n$} the underlying symplectic form on $\mathbb C^{2n}$.  
For $q \ge 2$ let $\lambda\colon \bigotimes^{q}\big(\mathbb C^{2n}\big) \to \bigotimes^{{q-2}}\big(\mathbb C^{2n}\big)$ be the linear map uniquely determined by $\lambda(u_1 \otimes\cdots\otimes u_q) = {J_n}(u_1,u_2)(u_3\otimes\cdots\otimes u_q)$, and denote by ${\mu}\colon \bigwedge^{q}\big(\mathbb C^{2n}\big) \to \bigotimes^{q-2}\big(\mathbb C^{2n}\big)$ its skew-symmetrization.  We have readily ${\mu}\colon  \bigwedge^{q}\big(\mathbb C^{2n}\big) \to \bigwedge^{q-2}\big(\mathbb C^{2n}\big)$, where $\bigwedge^0\mathbb C^{2n} := \mathbb C$.  Now, for $\Pi \in Gr(q,\mathbb C^{2n}) = X'$ spanned by $u_1,\cdots,u_q$, $u_{\chi(1)}\wedge\cdots \wedge u_{\chi(q-2)}$ are linearly independent as $\chi\colon \{1,\cdots,q-2\} \to \{1,\cdots,q\}$ ranges over all injective maps, hence ${\mu}(u_1\wedge\cdots\wedge u_q) = 0$ if and only if ${J_n}(u_{s(1)},u_{s(2)}) = 0$ for any permutation $s$ of $\{1,\cdots,q\}$.  Thus ${\mu}(u_1\wedge\cdots\wedge u_q) = 0$ if and only if $\Pi$ is isotropic in $(\mathbb C^{2n},{J_n})$.  In other words, $X \subset X'$ is the linear section defined by the vanishing of the vector-valued linear map {$\mu$} on $\bigwedge^q\big(\mathbb C^{2n}\big)$. 

Since any rational homogeneous manifold determined by a subdiagram of the marked Dynkin diagram for a Grassmannian must itself necessarily be a Grassmannian, the admissible pair $(X,X')$ is of non-subdiagram type.
\end{proof}

Note that in the case where $q=n$, $X$ is the Lagrangian Grassmannian $LGr_n$, and the rigidity phenomenon for substructures for the admissible pair $(X,X')$ has been demonstrated in \cite{M19}, which is stronger than the rigidity phenomenon for mappings for the same pair $(X,X')$.  Thus, in what follows our focus is in the case $2\leq q<n$, although in the statement of results for the purpose of uniformity we will include the case where $X$ is a Lagrangian Grassmannian as a special case.  We refer the reader to \cite{HM05} and \cite{HL21} for descriptions of the VMRT on a symplectic Grassmannian, and to \cite{MZ19} for basics concerning sub-VMRT structures. For simplicity, we will consider sub-VMRT structures $\varpi\colon \mathscr C(S) \to S$ on some locally closed complex submanifolds modeled on the admissible pair $(X,X')$ which are already known to extend to a projective subvariety ${Y} \subset X'$, since for the application to complete the proof of the Theorem~\ref{main} in the case of proper holomorphic maps from type~III to type~I domains we {will be} led to a VMRT-respecting map $h\colon U \overset{\cong}\longrightarrow S \subset X'$ which is known to extend to a rational map $H\colon X \dashrightarrow X'$ (cf. Proposition~\ref{immersion}).  

We summarize in what follows information about the VMRT of a symplectic Grassmannian taken from \cite{HM05} which is of relevance for our further discussion on the rational map $H$. With respect to the standard labeling of nodes in Dynkin diagrams as for instance found in \cite{Y93}, the symplectic Grassmannian $SGr(q,\mathbb C^{2n})$, $2\le q \le n$ (denoted as $S_{q,n}$ in \cite{HM05}) is of type $(\mathfrak{sp}_n,\alpha_q)$. Fix a Cartan subalgebra $\mathfrak h \subset \mathfrak{sp}_n$.  For $2 \le q < n$ the symplectic Grassmannian $X := SGr(q,\mathbb C^{2n})$ is a rational homogeneous space of Picard number 1 associated to a graded complex Lie algebra of depth 2, $\mathfrak{sp}_n =:\mathfrak g = \mathfrak g_{-2}\oplus \mathfrak g_{-1}\oplus \mathfrak g_{0} \oplus \mathfrak g_{1} \oplus \mathfrak g_{2}$, where for $k \neq 0$ the vector space $\mathfrak g_k$ is spanned by root spaces $\mathfrak g^\rho$ for roots $\rho$ with coefficient equal to $k$ in the simple root $\alpha_q$, and 
$\mathfrak g_o = \mathfrak h \oplus \mathfrak t$, where $\mathfrak t$ is spanned by root spaces $\mathfrak g^\rho$ for roots $\rho$ with vanishing coefficient in the simple root $\alpha_q$.  We have $[\mathfrak g_k,\mathfrak g_\ell] \subset \mathfrak g_{k+\ell}$, setting $\mathfrak g_p := 0$ whenever $p { \notin} \{-2, -1, 0, 1, 2\}$.  The parabolic subalgebra $\mathfrak p$ is given by $\mathfrak p = \mathfrak g_{-2}\oplus \mathfrak g_{-1}\oplus \mathfrak g_{0}$.  Writing $G ={Sp(n,\mathbb C)}$ and $P \subset G$ for the parabolic subgroup corresponding to the parabolic subalgebra $\mathfrak p \subset \mathfrak g$, we have $X = G/P$ and the identification $T_0(G/P) = \mathfrak g_1 \oplus \mathfrak g_2$.  The vector subspace $\mathfrak g_o \subset \mathfrak g$ is a reductive Lie algebra corresponding to a Levi factor $L := G_o \subset P$, { which} has a one-dimensional center $\mathfrak z$ and we have a direct sum decomposition of Lie algebras $\mathfrak g_o = \mathfrak z \oplus \mathfrak{sl}_{q}\oplus\mathfrak{sp}_{n-q}$ (the semisimple part corresponding to the Dynkin subdiagram obtained by removing $\alpha_q$).  $L$ acts irreducibly on $\mathfrak g_1$ and $\mathfrak g_2$.  The isotropy action of $P$ on $\mathfrak g_1$ defines the minimal $G$-invariant holomorphic distribution $D \subset TX$.  We have $D \cong U^*\otimes Q$, where $U$ is the universal rank-$q$ holomorphic vector bundle inherited from the Grassmannian $X' = Gr(q,\mathbb C^{2n}) \supset SGr(q,\mathbb C^{2n}) = X$, and $Q$ is a rank $2(n-q)$ holomorphic vector bundle. At $0 \in G/P$ the direct factor {up to isogeny} $SL(q,\mathbb C)$ of $L$ acts nontrivially on $U^*_0$ while the direct factor {up to isogeny} $Sp(n-q,\mathbb C)$ acts nontrivially on $Q_0$. The isotropy action of $P$ on $\mathfrak g_2$ defines a holomorphic vector bundle {{$R$} on $X$ which is isomorphic to $TX/D$. { We have} {$R \cong S^2U^*$}. 

A point $x \in SGr(q,\mathbb C^{2n})$ corresponds to {a} $q$-dimensional complex vector subspace $V$ in $(\mathbb C^{2n},{J_n})$.  Denoting by $V^{\perp} \subset \mathbb C^{2n}$ the {annihilator} of $V$ with respect to ${J_n}$, by hypothesis we have $V \subset V^{\perp}$. (We have $Q_0 = V^\perp/V$ equipped with a symplectic form induced from ${J_n}$.) A minimal rational curve $\Lambda$ on $X$ containing $x \in X$ is determined by the choice of complex vector subspaces $A, B \subset \mathbb C^{2n}$, $\dim_{\mathbb C}A = q-1$, $\dim_{\mathbb C}B = q+1$, such that $A \subset V \subset B$. We say that the minimal rational curve $\Lambda \subset X$ is special if and only if $B$ is isotropic in $(\mathbb C^{2n},{J_n})$, otherwise $\Lambda$ is referred to as a ``general minimal rational curve'' on $X$.   Then, the set of vectors tangent to special minimal rational curves on $X$ span a proper {holomorphic} distribution which is precisely $D \subsetneq TX$. For a special rational curve $\Lambda$ passing through $x \in X$, $T_x(\Lambda) =: \mathbb C\alpha$, we will refer to $[\alpha] \in \mathscr C_x(X)$ as a special rational tangent.  

The VMRT $\mathscr C_0(X) \subset \mathbb PT_0(X)$ can be described explicitly as follows.

\begin{Lem}\label{symplectic-vmrt}
The highest weight orbit $\mathscr S_0(X) = \mathbb PU^*_0\otimes \mathbb PQ_0 \hookrightarrow \mathbb P(U_0^*\otimes Q_0)$ of the $L$-representation in $\mathbb PT_0(D) \cong \mathbb P\mathfrak g_1$ is the variety of special rational tangents at $0 \in X$,  $\mathscr S_0(X) \subset \mathscr C_0(X)$, the VMRT at $0 \in X$. Writing $\mathcal W_0$ for the highest weight orbit of the $L$-representation in $\mathbb P\mathfrak g_2$, which is the image of $\mathbb PU_0^*$ in $\mathbb P\mathfrak g_2$ under the Veronese embedding, we have $\mathcal W_0 \subset \mathscr C_0(X)$.  Let $N \subset P$ be the nilpotent Lie subgroup corresponding to the nilpotent Lie subalgebra $\mathfrak n := \mathfrak g_{-2}\oplus\mathfrak g_{-1} \subset \mathfrak p$, then the orbit of {$[\lambda_0\odot\lambda_0]$}, $0 \neq \lambda_0 \in U_0^*$, under $N$ is given by $N[\lambda_0{\odot}\lambda_0] = \{[\lambda_0\otimes \mu + \lambda_0{\odot}\lambda_0] \in \mathbb P(\mathfrak g_1 \oplus\mathfrak g_2): \mu \in Q_0\} \subset \mathscr C_0(X)$.  Moreover the VMRT $\mathscr C_0(X)$ is precisely the union of $\mathscr S_0(X)$ and the $N$-orbits {$N[\lambda\odot\lambda]$} as $\lambda$ ranges over non-zero vectors in $U_0^*$.  As a consequence $\mathscr C_0(X)$ is the union of $\mathscr S_0(X)$, the unique closed $P$-orbit in $\mathscr C_0(X)$, and the unique open $P$-orbit $\mathcal O := \mathscr C_0(X)\!-\mathscr S_0(X)$.  Thus, $\mathscr C_0(X) = \{[\lambda{\otimes}\mu +  \lambda{\odot}\lambda]: 0 \neq \lambda \in U^*_0, \mu \in Q_0\}$.
\end{Lem}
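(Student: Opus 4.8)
The plan is to combine the geometric classification of minimal rational curves on $X = SGr(q,\mathbb C^{2n})$ recalled above with an explicit computation of their tangent directions at the base point $0 = [V]$, and then to organize the resulting tangent loci according to the isotropy action of $P = L\ltimes N$. First I would make the tangent space concrete: from $T_{[V]}Gr(q,\mathbb C^{2n}) = \mathrm{Hom}(V,\mathbb C^{2n}/V)$ and the exact sequence $0 \to V^\perp/V \to \mathbb C^{2n}/V \to \mathbb C^{2n}/V^\perp \to 0$, together with the identification $\mathbb C^{2n}/V^\perp \cong V^*$ induced by $J_n$, one checks that $\phi \in \mathrm{Hom}(V,\mathbb C^{2n}/V)$ is tangent to $X$ iff the form $\beta(v,w) := J_n(\phi(v),w)$ on $V$ is well defined and symmetric. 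This recovers $T_0(X) = \mathfrak g_1\oplus\mathfrak g_2$ with $\mathfrak g_1 = \mathrm{Hom}(V, V^\perp/V) = U_0^*\otimes Q_0$ (the directions $\phi$ landing in $V^\perp/V$, i.e. the distribution $D$) and $\mathfrak g_2 = S^2 V^* = S^2 U_0^*$ (the symmetric form $\beta$), realized as the quotient $T_0(X)/\mathfrak g_1$.

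Next I would compute the tangent at $0$ to the minimal rational curve associated to a pair $(A,B)$, $A\subset V\subset B$, $\dim A = q-1$, $\dim B = q+1$, with $B\subset A^\perp$ (the condition making the pencil of isotropic $q$-planes between $A$ and $B$ one-dimensional). Such a deformation fixes the hyperplane $A\subset V$, so its tangent $\phi$ has rank one, $\phi = \lambda\otimes b$, where $0\neq \lambda\in V^*$ is the functional with kernel $A$ and $b\in B/V \subset \mathbb C^{2n}/V$. Two cases arise. If $B$ is isotropic (the special curves) then $B\subset V^\perp$, so $b\in V^\perp/V = Q_0$ and the tangent is the decomposable vector $[\lambda\otimes b]\in \mathbb P\mathfrak g_1$; as $(A,B)$ vary these sweep out exactly the Segre variety $\mathbb PU_0^*\times\mathbb PQ_0 = \mathscr S_0(X)$, proving the first assertion. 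If $B$ is not isotropic, then $J_n$ descends to a nondegenerate form on $B/A$, forcing $b\notin V^\perp$; computing $\beta(v,w) = \lambda(v)J_n(b,w)$ and using $J_n(b,\cdot)|_V = c\,\lambda$ (which holds because this functional kills $A$ and is nonzero on $V/A$) gives $\mathfrak g_2$-component $c\,\lambda\odot\lambda\neq 0$, so the tangent has the shape $[\lambda\otimes\mu + \lambda\odot\lambda]$ with a rank-one symmetric $\mathfrak g_2$-part.

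I would then recover the orbit structure from the $P$-action, writing $P = L\ltimes N$ with $N = \exp(\mathfrak n)$, $\mathfrak n = \mathfrak g_{-2}\oplus\mathfrak g_{-1}$. Since $L$ acts irreducibly on $\mathfrak g_1$ and $\mathfrak g_2$, its highest weight orbits are the Segre $\mathscr S_0(X)\subset\mathbb P\mathfrak g_1$ and the Veronese $\mathcal W_0 = \{[\lambda\odot\lambda]\}\subset\mathbb P\mathfrak g_2$. For $\xi\in\mathfrak g_{-1}$ one has $\mathrm{ad}(\xi)(\mathfrak g_2)\subset\mathfrak g_1$ while $\mathrm{ad}(\xi)(\mathfrak g_1)\subset\mathfrak g_0\subset\mathfrak p$ and $\mathrm{ad}(\mathfrak g_{-2})(\mathfrak g_1\oplus\mathfrak g_2)\subset\mathfrak p$, so on $T_0(X) = \mathfrak g/\mathfrak p$ the group $N$ fixes $\mathbb P\mathfrak g_1$ pointwise and acts on $[\lambda\odot\lambda]$ by $\exp(\xi)[\lambda\odot\lambda] = [\lambda\odot\lambda + [\xi,\lambda\odot\lambda]]$; since $[\xi,\lambda\odot\lambda] = \lambda\otimes\mu$ with $\mu$ ranging over all of $Q_0$ as $\xi$ ranges over $\mathfrak g_{-1}$, this yields $N[\lambda\odot\lambda] = \{[\lambda\otimes\mu + \lambda\odot\lambda] : \mu\in Q_0\}$, matching the general tangents. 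Because the VMRT is $P$-invariant it is a union of $P$-orbits; assembling the above, $\mathscr C_0(X)$ is the union of $\mathscr S_0(X)$ (which is $L$-invariant and $N$-fixed, hence the unique closed orbit) and the open orbit $\mathcal O = P[\lambda_0\odot\lambda_0] = \bigcup_\lambda N[\lambda\odot\lambda]$, giving the stated formula, with $\mathscr S_0(X) = \overline{\mathcal O}\setminus\mathcal O$ arising as limits of the general tangents $[\lambda\otimes\mu + \lambda\odot\lambda]$ as $\mu\to\infty$.

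The main obstacle I anticipate is twofold and lives entirely in the tangent computation for the general curves: first, establishing that the $\mathfrak g_1$- and $\mathfrak g_2$-parts of a general tangent are governed by the \emph{same} covector $\lambda$ (the precise shape $\lambda\otimes\mu + \lambda\odot\lambda$ rather than an arbitrary pair), which is exactly the rank-one constraint coming from fixing $A$; and second, verifying exhaustiveness and minimality, namely that every minimal rational curve arises from such a pair $(A,B)$ with $B\subset A^\perp$, that these curves genuinely have minimal degree, and that the $N$-orbit sweep realizes every element of $\mathcal O$. The cleanest route to both the matching and the surjectivity is precisely the $N$-orbit computation, which also sidesteps the need to choose a splitting of the exact sequence above; the minimality and exhaustiveness can be quoted from the description of the VMRT of symplectic Grassmannians in \cite{HM05}.
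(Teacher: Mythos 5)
Your proposal is correct, but it takes a more self-contained route than the paper. The paper's own proof is essentially two lines: it observes that $SL(q,\mathbb C)$ acts transitively on $\mathcal W_0$ and $N$ on each $N[\lambda\odot\lambda]$, so that $P$ acts transitively on $\mathcal O$, and then defers \emph{all} remaining assertions (the identification of $\mathscr S_0(X)$ with the special tangents, the shape $[\lambda\otimes\mu+\lambda\odot\lambda]$ of the general tangents, and exhaustiveness) to \cite[Chapter 2]{HM05}. You instead rederive the tangent loci directly from the $(A,B)$-description of minimal rational curves: the computation $\beta(v,w)=\lambda(v)J_n(b,w)$ together with $J_n(b,\cdot)|_V=c\lambda$ (forced by $b\in A^\perp=(\ker\lambda)^\perp$ and $b\notin V^\perp$) is exactly what makes transparent \emph{why} the $\mathfrak g_1$- and $\mathfrak g_2$-components of a general tangent are governed by the same covector $\lambda$ — a point the paper leaves implicit in the citation — and your $N$-orbit computation via $\mathrm{ad}(\mathfrak g_{-1})\colon\mathfrak g_2\to\mathfrak g_1$ coincides with the paper's use of $N$ but is carried out rather than asserted. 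What the paper's approach buys is brevity and a clean reduction to a published reference; what yours buys is independence from the internal details of \cite{HM05} except for the minimality/exhaustiveness of the family of curves $\{(A,B): A\subset V\subset B\subset A^\perp\}$, which you correctly isolate as the one input that must still be quoted. The only places where you wave your hands — that $\mathrm{ad}(\xi)(\lambda\odot\lambda)$ sweeps out all of $\lambda\otimes Q_0$ as $\xi$ ranges over $\mathfrak g_{-1}\cong U_0\otimes Q_0^*$, and the normalization absorbing the constant $c$ into the projective class $[\lambda\otimes(\mu/c)+\lambda\odot\lambda]$ — are routine and do not constitute gaps.
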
 
\begin{proof}
Since {$SL(q,\mathbb C)$} acts transitively on $\mathcal W_0$, and $N$ acts transitively on $N[\lambda{\otimes}\lambda]$ by definition, $P$ acts transitively on $\mathcal O = \mathscr C_0(X)\!-\!\mathscr S_0(X)$. Clearly $\mathcal O \subset \mathscr C_0(X)$ is the unique (Zariski) open $P$-orbit.  All other statements are implicitly in \cite[Chapter 2]{HM05}.
\end{proof}
From the explicit description of the VMRT $\mathscr C_0(X)$ on the symplectic Grassmannian $X$, by a straightforward determination of the projective second fundamental form of $\mathscr C_0(X) \subset \mathbb PT_0(X)$ as a projective submanifold we have readily the following characterization of $\mathscr S_0(X) \subset \mathscr C_0(X)$ and $\mathcal O \subset \mathscr C_0(X)$ in terms of projective geometry.

\begin{Lem}[Lemma 6.6 in \cite{HL21}] \label{special-tangents}
Denote by $\zeta\colon S^2T{\mathscr C_0(X)} \to N_{\mathscr C_0(X)|\mathbb PT_0(X)}$ the projective second fundamental form as a holomorphic bundle map.  Then $\zeta$ is surjective at $[\alpha] \in \mathscr C_0(X)$ if and only if $[\alpha] \in \mathcal O$.  
\end{Lem}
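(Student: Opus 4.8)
The plan is to reduce the statement to an explicit second-order computation at one representative point of each of the two $P$-orbits making up $\mathscr C_0(X)$, exploiting $P$-equivariance. Since $\zeta$ is a $P$-equivariant bundle map and $P$ acts transitively on each orbit, the rank of $\zeta$ at $[\alpha]$ depends only on the $P$-orbit of $[\alpha]$. By Lemma~\ref{symplectic-vmrt} we have $\mathscr C_0(X) = \mathscr S_0(X) \sqcup \mathcal O$, so it suffices to prove that $\zeta$ is surjective at one point of $\mathcal O$ and fails to be surjective at one point of $\mathscr S_0(X)$. Throughout I abbreviate $U^* := U_0^*$, $Q := Q_0$, so that $\mathfrak g_1 = U^*\otimes Q$, $\mathfrak g_2 = S^2U^*$, $T_0(X) = \mathfrak g_1 \oplus \mathfrak g_2$, and I use the parametrization $v(\lambda,\mu) = \lambda\otimes\mu + \lambda\odot\lambda$ of the affine cone $\widehat{\mathscr C}_0(X)$ coming from Lemma~\ref{symplectic-vmrt}.

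For the open orbit I would compute at the Veronese point $[\lambda_0\odot\lambda_0] \in \mathcal W_0$; since $\mathcal W_0 \subset \mathbb P\mathfrak g_2$ is disjoint from $\mathscr S_0(X) \subset \mathbb P\mathfrak g_1$, this point lies in $\mathcal O$. Expanding $v(\lambda_0+\dot\lambda,\dot\mu)$ to second order yields the value $\lambda_0\odot\lambda_0$, the first-order (tangential) term $\lambda_0\otimes\dot\mu + 2\lambda_0\odot\dot\lambda$, and the second-order term $\dot\lambda\otimes\dot\mu + \dot\lambda\odot\dot\lambda$. Hence the affine tangent space to the cone is $(\lambda_0\otimes Q)\oplus(\lambda_0\odot U^*)$, and the normal space at this point is identified with $(U^*/\mathbb C\lambda_0)\otimes Q \oplus S^2(U^*/\mathbb C\lambda_0)$. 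Thus $\zeta$ is represented by $(\dot\lambda,\dot\mu)\mapsto (\overline{\dot\lambda}\otimes\dot\mu,\ \overline{\dot\lambda}\odot\overline{\dot\lambda})$ modulo the tangent space, where $\overline{\dot\lambda}$ denotes the class of $\dot\lambda$ in $U^*/\mathbb C\lambda_0$. After polarization the terms $\overline{\dot\lambda}\otimes\dot\mu$ sweep out all of $(U^*/\mathbb C\lambda_0)\otimes Q$ while the terms $\overline{\dot\lambda}\odot\overline{\dot\lambda}$ sweep out all of $S^2(U^*/\mathbb C\lambda_0)$, so $\zeta$ is surjective on $\mathcal O$.

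For the closed orbit I would compute at $[\lambda_0\otimes\mu_0] \in \mathscr S_0(X)$ with $\lambda_0,\mu_0 \neq 0$. As this point lies on the boundary of the image of $v$, the crucial device is the enlarged parametrization $w(\lambda,\mu,t) = \lambda\otimes\mu + t\,\lambda\odot\lambda$, which still lies on $\mathscr C_0(X)$ because $[w(\lambda,\mu,t)] = [v(t\lambda,\mu)]$ for $t\neq 0$ and tends to $[\lambda\otimes\mu]$ as $t\to 0$. Fixing decompositions $U^* = \mathbb C\lambda_0\oplus U'$ and $Q = \mathbb C\mu_0\oplus Q'$ to slice off the scaling redundancies and parametrizing by $(\nu,\rho,t)\in U'\oplus Q'\oplus\mathbb C$ near the origin, one finds that the second-order part of $w(\lambda_0+\nu,\mu_0+\rho,t)$ is $\nu\otimes\rho + 2t\,\lambda_0\odot\nu$, the affine tangent space to the cone is $\mathbb C(\lambda_0\otimes\mu_0)\oplus(U'\otimes\mu_0)\oplus(\lambda_0\otimes Q')\oplus\mathbb C(\lambda_0\odot\lambda_0)$, and the normal space is $(U'\otimes Q')\oplus\big(S^2U^*/\mathbb C(\lambda_0\odot\lambda_0)\big)$. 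The point is that the pure symmetric term $\nu\odot\nu$ is suppressed to third order (being quadratic in $\nu$ and linear in $t$), so while the $\mathfrak g_1$-part of the image still fills $U'\otimes Q'$, the $\mathfrak g_2$-part of the image lands only in $\lambda_0\odot U^*$ modulo $\lambda_0\odot\lambda_0$. Consequently the cokernel of $\zeta$ is isomorphic to $S^2(U^*/\mathbb C\lambda_0)$, which is nonzero for $q\geq 2$; hence $\zeta$ fails to be surjective on $\mathscr S_0(X)$, and combining the two cases proves the lemma.

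I expect the main obstacle to be the treatment of the closed orbit $\mathscr S_0(X)$, precisely because its points are limit points not lying in the image of $v$. One must introduce the auxiliary degeneration parameter $t$ (equivalently, degenerate along the $\mathbb C^*$-action of the center of $L$), verify that $\mathscr C_0(X)$ is smooth there so that $\zeta$ is defined, compute the affine tangent space to the cone correctly including the radial direction $\lambda_0\otimes\mu_0$, and confirm that slicing the two scaling redundancies yields a genuine local parametrization of the right dimension. Once this chart is in place, the non-appearance of the $\nu\odot\nu$ term is exactly what distinguishes $\mathscr S_0(X)$ from $\mathcal O$ and forces the drop in rank.
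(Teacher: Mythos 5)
Your computation is correct and is precisely the ``straightforward determination of the projective second fundamental form'' that the paper alludes to before the statement; the paper itself gives no proof, deferring to Lemma~6.6 of \cite{HL21}. Your two representative-point calculations (surjectivity at the Veronese point $[\lambda_0\odot\lambda_0]\in\mathcal O$, and the cokernel $S^2(U_0^*/\mathbb C\lambda_0)\neq 0$ at $[\lambda_0\otimes\mu_0]\in\mathscr S_0(X)$ via the degeneration parameter $t$), combined with $P$-equivariance of $\zeta$ and the two-orbit decomposition of Lemma~\ref{symplectic-vmrt}, check out.
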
 

In Proposition~\ref{immersion} we will prove that $H$ is a holomorphic immersion.  The proof will rely on 
the theory of geometric substructures of \cite{MZ19}, especially the Thickening Lemma, and the characterization results of symplectic Grassmannians of Hwang-Li~\cite{HL21}.  Here it should be noted that according to \cite{HL21}, strictly speaking a symplectic Grassmannian other than a Lagrangian Grassmannian cannot be recognized among projective manifolds of Picard number $1$ solely by the VMRT at a general point.  In its place it has been shown in \cite{HL21} that in these cases the symplectic Grassmannians are characterized by the VMRT at a general point together with the nondegeneracy of {the} Frobenius form associated to a proper distribution determined by the VMRT.  We observe that this condition is automatically satisfied in the problem at hand, when the geometric substructure arises from a germ of VMRT-respecting holomorphic map.  

Given a uniruled projective manifold $(M,\mathcal K_M)$ and a locally closed complex submanifold $S$ of $M$, for $x \in S$ we define $\mathscr C(S) := \mathscr C(M) \cap \mathbb PT(S)$, $\mathscr C_x(S) := \mathscr C_x(M) \cap \mathbb PT_x(S)$. Writing $\mu\colon T_x(M) - \{0\} \to \mathbb PT_x(M)$ for the canonical projection, for a subset $E \subset \mathbb PT_x(M)$ we write $\widetilde{E} := \mu^{-1}(E) \subset T_x(M) - \{0\}$ for the affinization of $E$.  Write $\varpi := \pi|_{\mathscr C(S)}\colon \mathscr C(S) \to S$. { The following definitions and Lemma are taken from \cite{MZ19}.}

\begin{Def}\label{subVMRT}
We say that $\varpi :=\pi|_{\mathscr{C}(S)}\colon \mathscr{C}(S) \to S$ is a sub-VMRT structure on $(M,\mathcal K_M)$ if and only if 

\begin{enumerate}
\item[(a)]
{\it the restriction of $\varpi$ to each irreducible component of $\mathscr{C}(S)$ is surjective,
and }
\item[(b)]
{\it at a general point $x \in S$ and for any irreducible component $\Gamma_x$ of $\,\mathscr C_x(S)$,
we have $\Gamma_x \not\subset\text{\rm Sing}(\, \mathscr C_x(M))$.} 
\end{enumerate}
\end{Def}

\begin{Def}\label{condition-t}
Let $(M,\mathcal K_M)$ be a uniruled projective manifold $M$ equipped with a minimal rational component $\mathcal K_M$. 
Let $\varpi\colon \mathscr C(S) \to S$, $\mathscr C(S): = \mathscr C(M) \cap \mathbb PT(S)$, be a sub-VMRT structure on a locally closed submanifold $S$ of $M$.  For a point $x \in S$, and $[\alpha] \in \text{\rm Reg}(\mathscr C_x(S)) \cap \text{\rm Reg}(\mathscr C_x(M))$, we say that $(\mathscr C_x(S),[\alpha])$, or equivalently $(\widetilde{\mathscr C_x}(S),\alpha)$, satisfies \text{\,\rm Condition (T)} (with respect to the sub-VMRT structure $\varpi\colon \mathscr C(S) \to S$ on $(M,\mathcal K_M)$) if and only if $\,T_\alpha(\widetilde{\mathscr C_x}(S)) = T_\alpha(\widetilde{\mathscr C_x}(M)) \cap T_x(S)$. 
\end{Def} 

Concerning Condition (T) we have the following lemma { on} linear sections ${Y}$ of a projective submanifold $M$ uniruled by projective lines which is a special case of \cite[Lemma 5.5]{MZ19} {in which} ${Y}$ is {further} assumed nonsingular (and uniruled by projective lines).  

\begin{Lem}\label{condition-t} 
Let $(M, \mathcal K_M)$, $M \subset \mathbb P^N$, be a uniruled projective manifold endowed with a minimal rational component consisting of projective lines, and denote by $\pi\colon \mathscr C(M) \to M$ the VMRT structure on ${M}$. Let ${Y} \subset M$ be a smooth linear section of ${M}$ and write $\mathscr C({Y}) = \mathscr C(M) \cap \mathbb PT({Y})$, the sub-VMRT structure on ${Y}$. Then, for a general point $z \in {Y}$ and a general smooth point $[\alpha] \in \mathscr C_z({Y})$, $(\mathscr C_z({Y}),[\alpha])$ satisfies $\text{\rm Condition (T)}$.
\end{Lem}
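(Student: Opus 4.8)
The plan is to reduce Condition (T) to a single transversality statement about the affine cone of the ambient VMRT and the tangent space of the linear section, and then to establish that transversality at a general point.

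First I would pass to affine cones at a fixed general point $z \in Y$. Write $W := T_z(M)$, $U := T_z(Y) \subset W$, and let $F := \widetilde{\mathscr C_z}(M) \subset W$ be the reduced affine cone over the ambient VMRT. Since $Y = M \cap L$ for a projective linear subspace $L \subset \mathbb P^N$, a line $\ell \subset M$ through $z$ lies in $Y$ if and only if $T_z(\ell) \subset U$; consequently $\mathscr C_z(Y) = \mathscr C_z(M) \cap \mathbb P T_z(Y)$, as is built into the definition of the sub-VMRT structure $\mathscr C(Y)$, and after affinization $\widetilde{\mathscr C_z}(Y) = F \cap U$ with $U$ of codimension $c := \mathrm{codim}_M Y$ in $W$. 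The inclusion $T_\alpha \widetilde{\mathscr C_z}(Y) \subseteq T_\alpha F \cap U$ is automatic: it follows from $\widetilde{\mathscr C_z}(Y) \subseteq F$ together with $\widetilde{\mathscr C_z}(Y) \subseteq U$ and the fact that $U$, being linear, is its own tangent space. Thus Condition (T) is exactly the assertion that this inclusion is an equality.

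Next I would record the precise equivalence. Since $\mathscr C(Y)$ is a sub-VMRT structure (Definition~\ref{subVMRT}), condition (b) there guarantees that a general smooth point $[\alpha] \in \mathscr C_z(Y)$ also lies in $\mathrm{Reg}(\mathscr C_z(M))$, which is the regime in which Condition (T) is defined; hence $F$ is smooth at $\alpha$ with $\dim T_\alpha F = \dim \mathscr C_z(M)+1$. For a smooth point $\alpha$ of $F$ and a linear subspace $U$ of codimension $c$, the identity $T_\alpha(F\cap U) = T_\alpha F \cap U$ holds if and only if $F$ meets $U$ transversally at $\alpha$, i.e. $T_\alpha F + U = W$; equivalently, writing $U = \ker(\xi_1,\dots,\xi_c)$ with $\xi_i = df_i|_z$ the differentials of the linear forms cutting out $L$, the functionals $\xi_1|_{T_\alpha F},\dots,\xi_c|_{T_\alpha F}$ are linearly independent. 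So Condition (T) at $[\alpha]$ is equivalent to this transversality. I would emphasize here that merely having the expected dimension $\dim\mathscr C_z(Y) = \dim\mathscr C_z(M)-c$ is \emph{not} sufficient by itself, since a tangential intersection of the correct dimension would still violate Condition (T); genuine transversality is required, and this is the main obstacle.

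Finally I would establish the transversality. One route is to encode the intersection globally: over the total space $\mathscr C(M)|_Y$ the pairing $([\alpha],z) \mapsto (\langle df_i, \alpha\rangle)_{i=1}^c$ defines a section of $\mathcal O_{\mathscr C(M)}(1)^{\oplus c}$, the $c$-th power of the dual of the tautological subbundle, whose zero locus is $\mathscr C(Y)$; a Bertini / generic-smoothness argument, carried out in the universal family over the parameter space of codimension-$c$ linear sections and then specialized, yields fiberwise transversality at general $(z,[\alpha])$. This is precisely the content of \cite[Lemma 5.5]{MZ19}, of which the present statement is the special case with $Y$ nonsingular, so I would invoke that lemma. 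For the concrete pair $(M,Y) = (Gr(q,\mathbb C^{2n}), SGr(q,\mathbb C^{2n}))$ relevant to this paper (Lemma~\ref{admissible}), the required transversality can alternatively be checked directly from the explicit description of $\mathscr C_0(X)$ in Lemma~\ref{symplectic-vmrt} together with the homogeneity of $Y$, which renders every point general.
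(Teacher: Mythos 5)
Your proposal is correct and follows essentially the same route as the paper: the paper offers no independent proof of this lemma but presents it as the special case, with $Y$ nonsingular, of \cite[Lemma 5.5]{MZ19}, which is exactly the citation your argument ultimately rests on, and your preliminary unwinding of Condition (T) into a transversality statement $T_\alpha \widetilde{\mathscr C_z}(M) + T_z(Y) = T_z(M)$ for the affine cone, together with your remark that expected dimension alone is insufficient, is an accurate account of what the cited lemma actually provides. One caution: the parenthetical suggestion to run a Bertini argument over the universal family of codimension-$c$ linear sections ``and then specialize'' would not by itself prove the lemma, since transversality for a general member of a family need not survive specialization to the particular section at hand (the section $SGr(q,\mathbb C^{2n}) \subset Gr(q,\mathbb C^{2n})$ relevant here is far from generic); but as you fall back on the cited lemma rather than on that sketch, the proof stands.
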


For the study of rational curves on a projective variety it is essential to find free rational curves lying on the smooth locus of the variety.  From the perspective of the theory of sub-VMRT structures the following result, which is a simplified version of the Thickening Lemma in \cite[Proposition 6.1]{MZ19}, gives a sufficient condition for finding an open neighborhood of some rational curve which is an immersed complex submanifold.

\begin{Thm}\label{thickening}
Let $(M,\mathcal K_M)$ be a uniruled projective manifold endowed with a minimal rational
component, $\dim_\mathbb C M =: n$, and $\varpi\colon \mathscr C(S) \to S$ be a sub-VMRT structure.
$\dim_\mathbb C S =: s$, and assume that there exists a projective subvariety ${Y} \subset M$ such that $\dim_\mathbb C {Y} = s$ and $S \subset {Y}$.  
Let $[\alpha] \in \mathscr C(S)$ be a { smooth} point of both $\mathscr C(S)$ and $\mathscr C(M)$ such that 
$\varpi\colon \mathscr C(S) \to S$ is a submersion at $\,[\alpha]$,
$\varpi([\alpha]) =: x$, $[\ell] \in \mathcal K_M$ be the minimal rational curve $($which 
is smooth at $x)$ such that $T_x(\ell) = \mathbb C\alpha$, and $\varphi\colon \text{\bf P}_\ell \to \ell$ be the  
normalization of $\ell$, $\bf P_\ell \cong \mathbb P^1$.  Suppose $(\mathscr C_x(S),[\alpha])$ satisfies $\text{\rm Condition (T)}$. Then, there exists an $s$-dimensional complex manifold 
{${\bf E}(\ell)$}, ${\bf P}_\ell  \subset {{\bf E}(\ell)}$, and a holomorphic immersion $\Phi\colon {{\bf E}(\ell)} \to M$ such that $\Phi|_{\bf P_\ell} \equiv \varphi$ and such that $\Phi({{\bf E}(\ell)})$ contains a neighborhood of $\,x$ on $S$.
\end{Thm}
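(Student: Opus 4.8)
The plan is to build the thickening by deformation theory of the minimal rational curve $\ell$ inside $M$, where the admissible deformations are constrained by the sub-VMRT structure $\varpi\colon \mathscr C(S)\to S$, so that the resulting evaluation map sweeps out an $s$-dimensional immersed submanifold recovering a neighbourhood of $x$ on $S$. The central object is the pullback $\varphi^*TM$ together with its positive part, and the whole argument hinges on producing inside $\varphi^*TM$ a rank-$s$ subbundle $\mathcal W$ which is globally generated and whose fibre over $\varphi^{-1}(x)$ is $T_x(S)$.

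First I would invoke the Grothendieck splitting for a free minimal rational curve: writing $p:=\dim_{\mathbb C}\mathscr C_x(M)$, one has $\varphi^*TM\cong \mathcal O(2)\oplus \mathcal O(1)^{\oplus p}\oplus \mathcal O^{\oplus(n-1-p)}$, where $\mathcal O(2)=T_{\mathbf P_\ell}$ corresponds to $\alpha$, and the $\mathcal O(1)$-summands correspond, at the distinguished point $0\in \mathbf P_\ell$ with $\varphi(0)=x$, to $T_\alpha(\widetilde{\mathscr C_x}(M))/\mathbb C\alpha$. The positive part $\mathcal P:=\mathcal O(2)\oplus\mathcal O(1)^{\oplus p}$ governs honest deformations of $\ell$ that move the base point and bend the curve, while the trivial part accounts for directions in which $\ell$ deforms by parallel translation.

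Next I would construct $\mathcal W$. At $0$ set $\mathcal W_0:=T_x(S)$. The VMRT-respecting hypothesis identifies the affine tangents of $\mathscr C(S)$ with a distinguished subfamily of the $\mathcal O(1)$-summands; here Condition (T) is decisive, for it gives $T_\alpha(\widetilde{\mathscr C_x}(S))=T_\alpha(\widetilde{\mathscr C_x}(M))\cap T_x(S)$, so that precisely $d:=\dim_{\mathbb C}\mathscr C_x(S)$ of the $\mathcal O(1)$-summands lie in $\mathcal W_0$, and these propagate to an $\mathcal O(1)^{\oplus d}$ inside $\mathcal W$. The remaining $s-1-d$ directions of $T_x(S)$ not meeting the positive part extend as flat ($\mathcal O$) subbundles, and I would use the projective subvariety $Y\supset S$, $\dim Y=s$, to certify that this count is exactly right and that the construction is globally valid along $\mathbf P_\ell$ rather than only near $x$. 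The upshot is a subbundle $\mathcal W\cong \mathcal O(2)\oplus\mathcal O(1)^{\oplus d}\oplus \mathcal O^{\oplus(s-1-d)}$ of $\varphi^*TM$, which is in particular globally generated with $H^1(\mathbf P_\ell,\mathcal W)=0$.

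Finally, since $\mathcal W$ is semipositive, contains $T_{\mathbf P_\ell}$, and has vanishing $H^1$, the deformations of $\mathbf P_\ell$ directed by $\mathcal W$ are unobstructed: one obtains an $s$-dimensional complex manifold $\mathbf E(\ell)\supset \mathbf P_\ell$ whose normal bundle is $\mathcal W/T_{\mathbf P_\ell}\cong \mathcal O(1)^{\oplus d}\oplus \mathcal O^{\oplus(s-1-d)}$, together with an evaluation map $\Phi\colon \mathbf E(\ell)\to M$ extending $\varphi$, such that $d\Phi$ identifies $T\mathbf E(\ell)$ with $\mathcal W$ along $\mathbf P_\ell$. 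In particular $\Phi$ is an immersion along $\mathbf P_\ell$, hence on a neighbourhood of it, and $d\Phi_0(T_0\mathbf E(\ell))=\mathcal W_0=T_x(S)$; since $\dim\mathbf E(\ell)=s=\dim S$, the inverse function theorem shows $\Phi(\mathbf E(\ell))$ contains a neighbourhood of $x$ on $S$. The main obstacle I anticipate is the global construction of $\mathcal W$ as a subbundle of the prescribed splitting type \emph{along the whole of $\mathbf P_\ell$} from the purely local sub-VMRT data at $x$: one must show that the choice of positive ($\mathcal O(1)$) summands dictated by $\mathscr C(S)$ at $0$ extends consistently and that the flat summands can be adjoined without destroying global generation, which is exactly where Condition (T) and the dimension bound supplied by $Y$ are used in an essential way (cf. \cite{MZ19}).
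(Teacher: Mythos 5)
The paper offers no proof of Theorem~\ref{thickening} at all: it is quoted as a simplified version of the Thickening Lemma, \cite[Proposition 6.1]{MZ19}. Judged on its own merits, your attempt has a genuine gap at the decisive final step. From $d\Phi_0(T_0{\bf E}(\ell))=\mathcal W_0=T_x(S)$ and $\dim {\bf E}(\ell)=s=\dim S$ you conclude via the inverse function theorem that $\Phi({\bf E}(\ell))$ contains a neighborhood of $x$ on $S$. This is a non sequitur: what you get is that $\Phi({\bf E}(\ell))$ and $S$ are two $s$-dimensional locally closed submanifolds through $x$ with the \emph{same tangent space} at the single point $x$, i.e.\ first-order tangency, and two such submanifolds need not agree on any neighborhood of $x$. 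Your ${\bf E}(\ell)$ is an abstract unobstructed deformation space directed by $\mathcal W$, and $\mathcal W$ remembers $S$ only through the one fiber $T_x(S)$, so it cannot detect $S$ beyond first order. The way the containment is actually secured in \cite{MZ19} is to build ${\bf E}(\ell)$ from the family of genuine minimal rational curves $\ell_{[\alpha']}$ attached to points $x'=\varpi([\alpha'])$ of $S$ with tangents $[\alpha']\in\mathscr C(S)$ near $[\alpha]$: since $\varpi$ is a submersion at $[\alpha]$, the base points $x'$ already sweep out a neighborhood of $x$ on $S$, so the containment is automatic, and the entire work shifts to proving that the union of these curves is $s$-dimensional and immersed along all of $\ell$ --- which is precisely where Condition (T) (controlling the rank of the tangent map of the evaluation over $x$) and the a priori bound supplied by the $s$-dimensional projective subvariety $Y\supset S$ must be used. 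In your write-up neither the submersiveness of $\varpi$ nor $Y$ does any real work, which is a symptom of the missing mechanism.

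A second, related gap is the construction of $\mathcal W$ itself, which you flag as ``the main obstacle'' and then defer to \cite{MZ19}. Away from $0=\varphi^{-1}(x)$ the only data you have is the splitting of $\varphi^*TM$ (and even standardness of $\ell$, i.e.\ the type $\mathcal O(2)\oplus\mathcal O(1)^{\oplus p}\oplus\mathcal O^{\oplus(n-1-p)}$ with $p=\dim\mathscr C_x(M)$, is not among the hypotheses of the theorem as stated). A rank-$s$ subbundle of prescribed splitting type is not determined by its fiber at one point, and $T_x(S)$ need not decompose as a piece of the positive part plus a piece of the trivial part: Condition (T) only identifies $T_x(S)\cap\bigl(\mathcal O(2)\oplus\mathcal O(1)^{\oplus p}\bigr)_0$ with $T_\alpha(\widetilde{\mathscr C_x}(S))$, while the remaining $s-1-d$ directions of $T_x(S)$ are in general graphs over the trivial summands with components in the positive part. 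The natural object is the saturation of the image subsheaf of the differential of the evaluation map of the family of curves described above; it is for that sheaf that Condition (T) and $\dim Y=s$ pin the rank down to $s$ and positivity of the splitting type yields the immersion. Without introducing that family, your count of summands, the claimed global generation of $\mathcal W$, and hence the unobstructedness argument are unsupported.
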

 
{Crucial to our arguments is the following solution \cite{HL21} of Hwang-Li giving a solution to the Recognition Problem for the symplectic Grassmannian.} 

\begin{Thm}[\cite{HL21}]\label{hwangli}
Let $X$ be a symplectic Grassmannian $SGr(q,\mathbb C^{2n})$, $0< q \leq n$.  Let $Y$ be a uniruled projective variety containing a smooth standard rational curve $\ell_0 \subset {\rm Reg}(Y)$ in its smooth locus.  { Denote} by $\mathcal K^0_Y$ the normalized moduli space of $($unparametrized$)$ free rational curves $\ell \subset {\rm Reg}(Y)$ which are deformations of $\ell_0$ inside ${\rm Reg}(Y)$.  Denote by $\mathscr C^0_y(Y) \subset \mathbb PT_y(Y)$ the variety of $\mathcal K^0_Y$-rational tangents at a general point $y$ on ${\rm Reg}(Y)$ and denote by $\mathscr C_y(Y)$ the topological closure of $\mathscr C^0_y(Y)$ in $\mathbb PT_y(Y)$.   Assume that there exists a nonempty Euclidean open subset $O \subset Y$ such that for any $y \in O$, $\mathscr C_y(Y)  \subset \mathbb PT_y(Y)$ is projectively equivalent to $\mathscr C_0(X)  \subset \mathbb PT_0(X)$ for a $($ and hence any$)$ reference point $0 \in X$.  Then, given any member $[\ell] \in \mathcal K^0_Y$ such that $\ell$ is a standard rational curve, some Euclidean neighborhood of $\ell$ is biholomorphic to a Euclidean neighborhood of a general line in one of the presymplectic Grassmannians corresponding to $(\mathbb C^{2n},{\mu})$, where ${\mu}$ denotes a skew-symmetric complex bilinear form on $\mathbb C^{2n}.$
\end{Thm}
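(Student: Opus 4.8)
The plan is to recover, starting from the VMRT-isomorphism hypothesis alone, the full filtered-manifold structure on a neighborhood of $\ell$ modeled on the depth-$2$ graded Lie algebra $\mathfrak g = \mathfrak g_{-2}\oplus\cdots\oplus\mathfrak g_2$ of $X=SGr(q,\C^{2n})$, and then to invoke the Tanaka--Morimoto equivalence method for filtered $G_0$-geometries to produce a local biholomorphism onto a neighborhood of a standard line in a (pre)symplectic Grassmannian. Here I focus on the genuinely new range $2\le q<n$, the case $q=n$ being the Lagrangian Grassmannian, which is Hermitian symmetric and hence covered by the long-root recognition of \cite{M08d}. The first step is the \emph{intrinsic reconstruction of the distribution} $D\subset TY$. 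At a general $y\in O$ the hypothesis furnishes a projective isomorphism $\mathscr C_y(Y)\cong \mathscr C_0(X)$, and by Lemma~\ref{special-tangents} the closed orbit $\mathscr S_y(Y)\subset\mathscr C_y(Y)$ is characterized intrinsically as the locus where the projective second fundamental form of $\mathscr C_y(Y)\subset\mathbb PT_y(Y)$ fails to be surjective. Setting $D_y:=\mathrm{Span}(\widetilde{\mathscr S_y(Y)})$ then defines, over a neighborhood of $\ell$ (after shrinking, using that $\ell$ is free and standard so that the construction persists along $\ell$), a holomorphic distribution of rank $\dim\mathfrak g_1=2q(n-q)$, together with a quotient $TY/D$ of rank $\binom{q+1}{2}$ matching $\mathfrak g_2\cong S^2U^*$.

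The second step is to \emph{identify the symbol together with its Frobenius form}. First I would check that the Levi (O'Neill) bracket $\Lambda^2 D\to TY/D$ of $D$ endows the graded bundle $\mathrm{gr}(TY)=D\oplus TY/D$ at each point with the structure of a graded nilpotent Lie algebra, and that, via the reconstructed identifications $D\cong U^*\otimes Q$ and $TY/D\cong S^2U^*$, this symbol is isomorphic to the negative part $\mathfrak g_{-1}\oplus\mathfrak g_{-2}$ of the (pre)symplectic model, the only ambiguity being the rank of the induced skew form $\mu$ on $Q$ encoded by the Frobenius form. I would then compute the Tanaka prolongation of this symbol relative to the reductive $\mathfrak g_0$ determined by the VMRT automorphisms and verify that it is the finite-dimensional graded Lie algebra of symplectic (resp. presymplectic) type, so that the reconstructed data assemble into a normal Cartan connection on a neighborhood of $\ell$ whose curvature is the complete obstruction to local flatness.

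The decisive and hardest step will be the \emph{vanishing of the harmonic curvature} of this Cartan geometry. The idea is that the lowest-weight harmonic component is governed by the VMRT, which by hypothesis is everywhere projectively the model one, so that this component vanishes identically; one then propagates the vanishing to the remaining components by a Bianchi-identity and weight argument combined with a Bott-type positivity argument restricted to the free rational curve $\ell$, along which the relevant normal bundle is semipositive. This is precisely where the short-root (depth-$2$) situation is harder than the Hermitian symmetric or long-root cases of \cite{M08d} and \cite{HH08}: the VMRT at a general point does \emph{not} rigidify the Frobenius form and therefore cannot by itself separate the symplectic Grassmannian from its presymplectic degenerations. Once flatness along $\ell$ is established, the developing map of the flat Cartan geometry yields a biholomorphism of a neighborhood of $\ell$ onto a neighborhood of a standard line in the model associated to $(\C^{2n},\mu)$, which is the genuine symplectic Grassmannian exactly when $\mu$ is nondegenerate; this is the source of the stated weakening to presymplectic Grassmannians and completes the recognition.
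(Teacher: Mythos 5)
First, a point of orientation: the paper does not prove Theorem~\ref{hwangli} at all --- it is imported verbatim from Hwang--Li \cite{HL21} and used as a black box (together with Corollary~\ref{symplectic}, which upgrades the presymplectic conclusion to the symplectic one under nondegeneracy of the Frobenius form). So your proposal is not competing with an argument in this paper; it is attempting to reprove a substantial external theorem. Your outline does follow the strategy that has become standard for the Recognition Problem since \cite{HH08}: reconstruct the filtration from the VMRT (via the second-fundamental-form characterization of the special locus, as in Lemma~\ref{special-tangents}), identify the symbol algebra, build a Cartan-type connection, and kill its curvature using positivity along free rational curves. Your identification of where the short-root case is harder --- the VMRT does not determine the rank of the Frobenius form, whence the presymplectic degenerations --- is exactly right and is exactly why the conclusion of the theorem is weakened to presymplectic Grassmannians.

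However, as a proof the proposal has genuine gaps precisely at the steps you yourself flag as decisive. (i) The existence of a normal Cartan connection for the depth-two filtered structure with presymplectic symbol is not automatic: one must compute the Tanaka prolongation of the symbol relative to $\mathfrak g_0$ and show it is finite-dimensional and of the expected type, and for degenerate $\mu$ the symbol is not the negative part of a semisimple graded Lie algebra, so the parabolic-geometry machinery does not apply off the shelf; you assert this computation rather than perform it. (ii) The claim that the lowest-weight harmonic component ``is governed by the VMRT \dots\ so that this component vanishes identically'' is unjustified: in general only the torsion of the characteristic conic connection is directly controlled by isotriviality of the VMRT, and the remaining harmonic components require a weight-by-weight comparison of the curvature module with the splitting type of $TY|_\ell$, which is the actual content of the Bott-type vanishing and is nowhere carried out. (iii) The hypothesis gives the VMRT equivalence only on a Euclidean open set $O$, whereas the conclusion concerns an arbitrary standard member $[\ell]\in\mathcal K^0_Y$, which need not meet $O$; propagating the reconstructed structure to a neighborhood of such an $\ell$ requires an analytic-continuation or constructibility argument that ``shrinking'' does not supply. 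In short, the architecture is plausible and consonant with \cite{HL21}, but the proposal is a program rather than a proof; for the purposes of this paper the correct move is the one the authors make, namely to cite \cite{HL21}.
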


For the meaning of presymplectic Grassmannians and that of a general line on such a space we refer the reader to \cite{HL21}.  

By the hypothesis in Theorem~\ref{hwangli}, for any $y \in Y$, $\mathscr C_y(Y)  \subset \mathbb PT_y(Y)$ is projectively equivalent to $\mathscr C_0(X)  \subset \mathbb PT_0(X)$ for a reference point $0 \in X$.   Assuming $r \ge 1$ we have thus on $O$ a uniquely determined  $E \subsetneq T_O$ {corresponding to the subspace $\frak g_1$}.  We have the Frobenius form $\varphi\colon E {\otimes E} \to T_O/E$ defined as follows.  Let $y \in {O}$ and $v, w \in E_y$.  Shrinking the neighborhood ${U(y)}$ of $y$ if necessary let $\widetilde v, \widetilde w$ by $E$-valued holomorphic vector fields on ${U(y)}$ such that $\widetilde v(y) = v$ and $\widetilde w(y) = w$, then $\varphi(v,w) := [\widetilde v,\widetilde w](y)/E_y \in T_y(Y)/E_y$ is uniquely determined independent of the holomorphic extensions $\widetilde v, \widetilde w \in \Gamma({U(y)},E)$, and the Frobenius form $\varphi\colon E {\otimes} E \to T_O/E$ is defined at the arbitrary point $y \in O$ by $\varphi(v\otimes w) = \varphi(v,w)$ and extended to $E\otimes E$ by complex linearity.  Since the Lie bracket is skew-symmetric we may regard the Frobenius form as $\varphi\colon \bigwedge^2E \to T_O/E$.
 
\begin{Cor}\label{symplectic}
In the notation of the preceding paragraph and Theorem~\ref{hwangli}, assuming that the Frobenius form $\varphi\colon \bigwedge^2E \to T_O/E$ is nondegenerate in the sense that for any $y \in O$ and for any nonzero vector $v \in E_y$, there exists $w \in E_y$ such that $\varphi(v\wedge w) \neq 0$.  Then, in the concluding statement of Theorem~\ref{hwangli}, there exists some Euclidean neighborhood of $\ell$ in $Y$ which is biholomorphic to a Euclidean neighborhood of a general minimal rational curve on $X$.
\end{Cor}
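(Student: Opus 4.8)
The plan is to feed the hypothesis into Theorem~\ref{hwangli} and then use the Frobenius form as a biholomorphic invariant that detects the nondegeneracy of $\mu$. By Theorem~\ref{hwangli}, the given standard rational curve $\ell$ admits a Euclidean neighborhood $W$ in $Y$ together with a biholomorphism $\Psi\colon W \to W'$ onto a Euclidean neighborhood $W'$ of a general line $\ell'$ in a presymplectic Grassmannian $G_\mu$ attached to $(\mathbb C^{2n},\mu)$, and since the VMRT structure is intrinsic to the minimal rational component, $\Psi$ carries the VMRT structure of $Y$ to that of $G_\mu$. Because every nondegenerate skew form on $\mathbb C^{2n}$ is $GL(2n,\mathbb C)$-equivalent to $J_n$, once we show that $\mu$ is nondegenerate we obtain $G_\mu \cong SGr(q,\mathbb C^{2n}) = X$, under which a general line $\ell'$ becomes a general minimal rational curve on $X$; composing with $\Psi$ then yields the assertion. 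Thus the entire statement reduces to proving that $\mu$ is nondegenerate.

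First I would record that the distribution $E$, and hence the Frobenius form $\varphi\colon \bigwedge^2 E \to T_O/E$, is determined purely by the VMRT and is therefore transported by $\Psi$. Indeed, by Lemma~\ref{special-tangents} the special rational tangents form exactly the locus of $\mathscr C_y(Y)$ at which the projective second fundamental form $\zeta$ fails to be surjective; this is a projective-geometric condition preserved by any VMRT-respecting biholomorphism. On $X$ one has $\mathscr S_0(X) = \mathbb PU_0^*\otimes \mathbb PQ_0 \subset \mathbb P\mathfrak g_1 = \mathbb PE_0$ by Lemma~\ref{symplectic-vmrt}, a Segre variety whose linear span is all of $\mathbb PE_0$; hence $E_y$ is recovered as the linear span of the affine cone over the special locus, in agreement with the description of $E$ as the subbundle modeled on $\mathfrak g_1$. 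Consequently $\Psi$ carries $E|_W$ to the corresponding distribution $E'$ on $W'$ and, the Frobenius form being defined via Lie brackets of $E$-valued vector fields, carries $\varphi$ to the Frobenius form $\varphi'$ of $G_\mu$. In particular $\varphi$ is nondegenerate on $W$ if and only if $\varphi'$ is nondegenerate on $W'$.

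It then remains to compute $\varphi'$ on $G_\mu$ and to show it is nondegenerate precisely when $\mu$ is. At a general point $[V] \in G_\mu$, writing $V^\perp$ for the $\mu$-annihilator of $V$ and $\overline\mu$ for the skew form induced by $\mu$ on $Q_{[V]} := V^\perp/V$, the presymplectic analogue of Lemma~\ref{symplectic-vmrt} gives $E'_{[V]} \cong U_{[V]}^*\otimes Q_{[V]}$ and $T_{[V]}(G_\mu)/E'_{[V]} \cong S^2 U_{[V]}^*$, under which, up to a nonzero constant, $\varphi'\big((\lambda\otimes u)\wedge(\lambda'\otimes u')\big) = \overline\mu(u,u')\,\lambda\odot\lambda'$. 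If $0 \neq u_0$ lies in the radical of $\overline\mu$, then $v := \lambda\otimes u_0$ satisfies $\varphi'(v\wedge w) = 0$ for all $w \in E'_{[V]}$, so $\varphi'$ is degenerate; conversely, if $\overline\mu$ is nondegenerate then, expanding a nonzero $v = \sum_k \lambda_k\otimes u_k$ with $\lambda_k$ linearly independent and choosing $u'$ with $\overline\mu(u_1,u')\neq 0$, the element $w = \lambda_1\otimes u'$ gives $\varphi'(v\wedge w)\neq 0$ since the terms $\lambda_k\odot\lambda_1$ are linearly independent. Thus $\varphi'$ is nondegenerate if and only if $\overline\mu$ is, and for general $V$ the radical of $\overline\mu$ is nontrivial exactly when $\mu$ is degenerate. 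Combined with the previous paragraph, the nondegeneracy of $\varphi$ forces $\mu$ to be nondegenerate, which completes the reduction.

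The step I expect to be the main obstacle is the explicit determination of $\varphi'$ on the presymplectic Grassmannians and the verification that the radical of the induced form $\overline\mu$ on $V^\perp/V$ has the expected dimension for a general isotropic $V$, since this requires working uniformly in the rank of $\mu$ rather than just in the nondegenerate case treated in Lemma~\ref{symplectic-vmrt}. These computations should be extractable from the description of presymplectic Grassmannians in \cite{HL21}; the genuinely new bookkeeping is the passage from degeneracy of $\varphi'$ to degeneracy of $\mu$, and the intrinsic characterization of $E$ via Lemma~\ref{special-tangents} is what guarantees that this invariant is faithfully transported across the biholomorphism $\Psi$.
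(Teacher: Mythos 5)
Your proposal is correct and is exactly the intended argument: the paper states Corollary~\ref{symplectic} without an explicit proof, treating it as immediate from Theorem~\ref{hwangli} together with the fact (from \cite{HL21}) that among the presymplectic Grassmannians attached to $(\mathbb C^{2n},\mu)$ the nondegeneracy of the Frobenius form of $E$ singles out the case where $\mu$ is nondegenerate, i.e., $G_\mu \cong SGr(q,\mathbb C^{2n})$. Your two supporting points --- that $E$ is intrinsically recovered from the VMRT via the surjectivity locus of the projective second fundamental form (Lemma~\ref{special-tangents}), so that the VMRT-respecting biholomorphism of Theorem~\ref{hwangli} transports the Frobenius form, and that the Frobenius form on $G_\mu$ is $\overline\mu(u,u')\,\lambda\odot\lambda'$ up to scale, hence degenerate exactly when $\mu$ is --- are precisely the details the paper leaves to the reader and are used again implicitly in the proof of Proposition~\ref{immersion}.
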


In what follows we consider holomorphic embeddings defined on some nonempty connected open subset $U \subset X$.  Shrinking $U$ if necessary, we may assume that $\Lambda \cap S$ is either empty or a nonempty connected open set for any minimal rational curve $\Lambda$ on $X$. (For example, composing the minimal projective embedding of $X$ with a local affine linear projection in inhomogeneous coordinates, we may choose {an open subset $U \subset X$ which is identified by means of local holomorphic coordinates with a convex open subset $U' \subset \mathbb C^s$}, so that $\Lambda \cap U$ is an open subset of an affine line whenever $\Lambda \cap U \neq \emptyset$.)   

\begin{Pro}\label{immersion} 
Write $X := SGr(q,\mathbb C^{2n})$ and $X' := Gr(q,\mathbb C^{2n})$, $2 \le q \le n$.  Suppose there exists a nonempty connected open subset $U \subset X$ and a holomorphic embedding $H\colon U \to X'$ onto a locally closed complex submanifold $S \subset X'$ such that for any $x \in U$, writing $\mathscr C_{H(x)}(S) := \mathscr C_{H(x)}(X') \cap \mathbb PT_{H(x)}(S)$, the inclusion $\mathscr C_y(S) \subset \mathbb PT_y(S)$, for any $y \in S$, is projectively equivalent to $\mathscr C_0(X) \subset \mathbb PT_0(X)$ for a reference point $0 \in X$, and such that
{ the following statement $(*)$ holds true. $(*)$ For any minimal rational curve $\Lambda$ on $X$ such that $\Lambda \cap U \neq \emptyset$, $H(\Lambda \cap U)$ is an open subset of some projective line on $X'$.  Then, $H\colon U \overset{\cong}\longrightarrow S$ extends to a { rational map} $H\colon X \dashrightarrow X'$.  Furthermore,  $H\colon X {\to }{X'}$ is { in fact} a holomorphic immersion onto a projective subvariety ${Y} \subset X'$.}
\end{Pro}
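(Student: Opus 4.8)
The plan is to argue in two stages: first establish the rational extension of $H$, then upgrade it to a holomorphic immersion by the method of parallel transport along minimal rational curves. For the first stage, hypothesis $(*)$ says precisely that $H$ carries every connected open piece of a minimal rational curve on $X$ onto an open piece of a projective line on $X'$, i.e.\ onto an open piece of a minimal rational curve on $X'$ (recall $\imath_*\colon H_2(X,\mathbb Z)\xrightarrow{\cong}H_2(X',\mathbb Z)$). Since $H$ is moreover VMRT-respecting, i.e.\ $\mathscr C_y(S)\subset\mathbb PT_y(S)$ is projectively equivalent to $\mathscr C_0(X)\subset\mathbb PT_0(X)$ for every $y\in S$, these are exactly the data needed to run the extension argument in the proof of the non-equidimensional Cartan--Fubini theorem \cite[Theorem~1.1]{HoM10}: that argument analytically continues $H$ along minimal rational curves and only uses that $H$ sends open pieces of minimal rational curves to minimal rational curves while respecting VMRTs, so it applies despite $(X,X')$ being of non-subdiagram type (Lemma~\ref{admissible}). (Alternatively the rational extension can be obtained by the Hartogs-type argument of \cite{MT92}.) One concludes that $H$ extends to a rational map $H\colon X\dashrightarrow X'$ whose image $Y:=\overline{H(X)}$ is a projective subvariety of $X'$ of dimension $s:=\dim_{\mathbb C}X$.

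For the second stage I set $M:=X'$ with its minimal rational component consisting of projective lines and regard $\varpi\colon\mathscr C(S)\to S$, $\mathscr C(S)=\mathscr C(M)\cap\mathbb PT(S)$, as the sub-VMRT structure modeled on $(X,X')$. Two inputs must be secured before the Thickening Lemma can be applied. First, the Frobenius form $\varphi\colon\bigwedge^2E\to TS/E$ of the distribution $E\subset TS$ corresponding to $\mathfrak g_1$ (so that $\mathbb PE_y$ carries the special rational tangents $\mathscr S_y(S)$, cf.\ Lemma~\ref{symplectic-vmrt}) must be nondegenerate; this is automatic here because the substructure arises from a germ of VMRT-respecting holomorphic embedding, so $E$ and $\varphi$ are locally pulled back from $X$, where the corresponding Frobenius form is nondegenerate, and Corollary~\ref{symplectic} then applies. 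Second, at a general point $y=H(x)\in S$ and a general smooth point $[\alpha]\in\mathscr C_y(S)$ I would verify Condition (T), namely $T_\alpha(\widetilde{\mathscr C_y}(S))=T_\alpha(\widetilde{\mathscr C_y}(M))\cap T_y(S)$. As $\mathscr C_y(S)=\mathscr C_y(M)\cap\mathbb PT_y(S)$, the projective model of the inclusion $\mathscr C_y(S)\subset\mathscr C_y(M)$ is the standard linear section $X\subset X'$ of Lemma~\ref{admissible}; Condition (T) being projectively invariant, it follows from the linear-section criterion of Lemma~\ref{condition-t} applied to this model, \emph{provided} $[\alpha]$ lies in the open orbit $\mathcal O=\mathscr C_0(X)\!-\!\mathscr S_0(X)$ of general rational tangents, which by Lemma~\ref{special-tangents} is exactly the locus where the projective second fundamental form of $\mathscr C_0(X)\subset\mathbb PT_0(X)$ is surjective.

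With these inputs in hand, for a general minimal rational curve $\ell$ on $X$ meeting $U$ (so its tangent lies in $\mathcal O$ and $H(\ell\cap U)$ is an open piece of a line $\ell'\subset X'$) the Thickening Lemma (Theorem~\ref{thickening}), applied with $[\alpha]$ tangent to $\ell'$ and the subvariety $Y\subset M$, produces an $s$-dimensional immersed complex manifold containing $\ell'$ whose image contains a neighborhood of $H(x)$ on $S$. Feeding the resulting free rational curves into Hwang--Li's characterization (Theorem~\ref{hwangli}) together with the Frobenius nondegeneracy (Corollary~\ref{symplectic}), one obtains that a Euclidean neighborhood of the general image line $\ell'$ in $Y$ is biholomorphic to a neighborhood of a general minimal rational curve on $X$. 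This exhibits $H$ near general points as a local biholomorphism onto $Y$; propagating this local isomorphism by analytic continuation along chains of general minimal rational curves, which cover $X$ outside a subvariety of codimension $\ge 2$, shows that $H$ is a holomorphic immersion on a Zariski-dense open $X_0\subset X$ with $\mathrm{codim}(X\setminus X_0)\ge 2$. To remove indeterminacy and ramification on $X\setminus X_0$, I would pass through any point $x_0$ a general minimal rational curve $\ell$ meeting $X\setminus X_0$ only at $x_0$; since $H|_{\ell\cap X_0}$ is an open piece of a line in $X'\subset\mathbb P^N$, it extends across $x_0$ to an isomorphism $\ell\xrightarrow{\sim}\ell'$, and the immersed thickenings of these lines patch, by the uniqueness of analytic continuation as in \cite{M19}, into a single immersion. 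Hence $H\colon X\to X'$ is everywhere defined, with empty indeterminacy locus, and is a holomorphic immersion onto $Y=H(X)$.

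The main obstacle is precisely the non-homogeneity of the moduli of minimal rational curves on $X=SGr(q,\mathbb C^{2n})$ for $2\le q<n$: unlike the Lagrangian case $q=n$ of \cite{M19}, the parallel-transport and Hwang--Li recognition arguments are available only for \emph{general} minimal rational curves (those with tangent in the open orbit $\mathcal O$), and the symplectic Grassmannian is not even recognized by its VMRT alone, so one must supply the extra invariant furnished by the nondegeneracy of the Frobenius form. The delicate point is therefore to show that controlling $H$ along general minimal rational curves suffices to force immersivity and the absence of indeterminacy along the special curves tangent to $D$ as well; this is where the codimension-$\ge 2$ count of the degeneracy locus and the line-extension/patching argument of the previous paragraph are essential.
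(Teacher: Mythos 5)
Your overall strategy coincides with the paper's: rational extension via the second step of non-equidimensional Cartan--Fubini (with $(*)$ replacing the usual non-degeneracy condition), Condition (T) checked on the open $P$-orbit $\mathcal O$ via the linear-section criterion, the Thickening Lemma, Hwang--Li recognition supplemented by Frobenius nondegeneracy, and a curve-by-curve treatment of the bad locus. But the final stage --- forcing holomorphicity and immersivity of $H$ at points of the bad set --- has two genuine gaps. First, the VMRT-equivalence (hence Condition (T) and the Frobenius nondegeneracy) is hypothesized only on $S = H(U)$, a Euclidean open set. To run the thickening and recognition along the image line $\ell' = \overline{H(\ell\setminus\{x_0\})}$ for a bad point $x_0 \notin U$, you need these properties at points of $\ell'$ lying far from $S$. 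The paper secures this with a constructibility argument: the locus where $\mathscr C_z(X')\cap\mathbb PT_z(S(z)) \subset \mathbb PT_z(S(z))$ is projectively equivalent to the model is shown to contain a Zariski open subset $\mathcal W$ (via the algebraic fibre bundle $\mathscr S^0$ over $X'$), so a general minimal rational curve meets its complement in only finitely many points. Your appeal to ``analytic continuation along chains of general curves'' does not by itself propagate a projective-equivalence condition that is not a priori closed.

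Second, Hwang--Li's theorem yields only an \emph{abstract} biholomorphism $\Theta_0$ between a neighborhood of $\ell'$ in $Y$ and a neighborhood of a general line in $X$; a priori $\Theta_0$ has nothing to do with $H$. The paper bridges this by regarding $H|_{U(y)}$ as a VMRT-preserving biholomorphism into the copy $X_1$ of $X$ furnished by $\Theta_0$ and invoking the \emph{equidimensional} Cartan--Fubini extension theorem of \cite{HM01} to extend it to a global biholomorphism $\Psi\colon X \to X_1$; it is this identification that forces $H$ to be holomorphic and of maximal rank at $x_0$. Your ``the immersed thickenings patch by uniqueness of analytic continuation'' elides exactly this step: extending $H|_\ell$ across $x_0$ as a map of curves controls neither the indeterminacy nor the rank of $H$ at $x_0$ in the transverse directions. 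Relatedly, your assertion that the degeneracy locus has codimension $\ge 2$ is unjustified --- the ramification locus could a priori be a divisor --- and is not needed: the paper instead picks $[\alpha]\in\mathscr C_x(X)$ with $\alpha\notin T_x(\mathcal A)$ at $x\in{\rm Reg}(\mathcal A)$ (using projective nondegeneracy of the VMRT) and then runs a finite descending induction, replacing $\mathcal A$ by ${\rm Sing}(\mathcal A)$.
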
 

\noindent
\begin{proof} 
Since the case of $q = n$ has been established in \cite{M19}, in what follows we assume that $2\leq  q < n$ so that $X$ is a symplectic Grassmannian other than a Lagrangian Grassmannian. { In what follows we apply results of \cite{HoM10} to the case of $H\colon U \overset{\cong}\longrightarrow S \subset X'$, which is VMRT-respecting (i.e., $H_*(\mathscr C_x(X)) = \mathscr C_{H(x)}(X') \cap \mathbb P(dH(T_x(U))$ for $x \in U$).). 

It follows from the hypothesis $(*)$ that $H$ admits a rational extension, by the proof of \cite[Theorem 1.1]{HoM10} of non-equidimensional Cartan-Fubini extension.  More precisely, for a VMRT-respecting holomorphic embedding $\varphi: U \to X'$ in general, assuming that $\varphi$ satisfies some non-degeneracy condition (i.e., 2(b) in the Definition preceding \cite[ Proposition 2.1]{HoM10}) concerning the second fundamental form of VMRTs as projective subvarieties (noting that 2(a) concerning the bad locus $(X',\mathcal K')$ in the cited proposition is vacuous because $X'$ is a rational homogeneous manifold) consists of two steps.  First of all, as given in \cite[\it loc. cit.]{HoM10}, it is proven that the map $\varphi$ transforms a connected open subset of a minimal rational curve into a minimal rational curve as a consequence of the said non-degeneracy condition. Secondly, rational extendibility of $\varphi$ is deduced from Hartogs extension by means of parametrized analytic continuation along minimal rational curves as done in \cite[Proposition 4.3]{HM01}. Without the first step the arguments of the second step are still valid provided that we know {\it a priori} that $(*)$ holds true for $H: U \to X'$.  Here $(*)$ is taken as a hypothesis, hence $H: U \to X'$ extends rationally to $H: X \dashrightarrow X'$.  (It will be checked in Section~\ref{Rigidity of the induced moduli map} that $(*)$ is valid for $H$ being some moduli map $f_r^\flat$ (or its analogue) to be defined in Section~\ref{Induced moduli map}.})  For the proof of Proposition~\ref{immersion} it remains to establish the last statement that $H\colon X {\to }{X'}$ is in fact a holomorphic immersion, which we proceed now to do.  

There is a subvariety $A \subsetneq X$ such that the meromorphic map $H\colon X \dashrightarrow X'$ is holomorphic and of maximal rank on $X-A$.  Write ${Y} \subset X'$ for the Zariski closure of $H(X\!-\!A)$. We apply Theorem~\ref{thickening}, the Thickening Lemma adapted to our situation, to the meromorphic map $H\colon X \dashrightarrow {Y}$ in order {to find an open neighborhood of $\ell$ in $Y$ which is an immersed complex submanifold where $\ell$ is a certain projective line lying on $Y$.}  
By the hypothesis, for every point $s \in S$, and for $\mathscr C_s(S) := \mathscr C_s(X') \cap \mathbb PT_s(S)$, the inclusion $\mathscr C_s(S) \subset \mathbb PT_s(S)$ is projectively equivalent to the inclusion $\mathscr C_0(X) \subset \mathbb PT_0(X)$, $0 \in X$. For $x \in X\!-\!A$, writing $z:= H(x)$, $H$ maps some connected open neighborhood $U(x)$ of $x$ on $X\!-\!A$ onto a locally closed complex submanifold $S(z) \subset X'$.  Define $\mathscr C_z(S(z)) := \mathscr C_z(X') \cap \mathbb PT_z(S(z))$. 

Consider the subset $W \subset X\!-\!A$ such that the inclusion $\mathscr C_z(S) \subset \mathbb PT_z(S(z))$ is projectively equivalent to the inclusion $\mathscr C_0(X) \subset \mathbb PT_0(X)$. Then, $W$ contains the nonempty connected open subset $U \subset X\!-\!A$ (in the Euclidean topology).  We claim that $W$ contains a nonempty Zariski open subset $\mathcal W \subset X\!-\!A$.  To see this let $\chi\colon \mathscr P \to X'$ be the Grassmann bundle whose fiber over $w \in X'$ consists of $s$-planes $\Pi \subset T_w(X')$. Denote by $\mathscr S \subset \mathscr P$ the fiber subbundle whose fiber over $w \in X'$ consists of $s$-planes $\Pi$ such that the inclusion $\mathbb P\Pi\cap \mathscr C_w(X') \subset \mathbb P\Pi$ is projectively equivalent to the inclusion  $\mathscr C_0(X) \subset \mathbb PT_0(X)$.  $\mathscr S \subset \mathscr P$ is a constructible subset.  Hence, at every point $w \in X'$, the topological closure $\mathscr Q_w := \overline{\mathscr S_w} \subset \mathscr P_w$ is a Zariski closed subset of $\mathscr P_w$, and $\mathscr S_w$ contains a nonempty Zariski open subset. Since Zariski open subsets are closed under taking unions, there is a biggest (nonempty) Zariski open subset in $\mathscr S_w$, to be denoted by $\mathscr S^0_w \subset \mathscr S_w$.  Let $G'$ be the identity component of the automorphism group of $X'$.  $G' \cong \mathbb PGL(2n,\mathbb C)$ is a connected complex algebraic group.  For $w \in X'$ write $P'_w \subset G'$ for the parabolic subgroup which is the isotropic subgroup of $G'$ at $w$, so that $X \cong G'/P'_w$.  By the maximality of $\mathscr S^0_w \subset \mathscr S_w$ it follows that $\mathscr S^0_w$ is invariant under the isotropy action of $P'_w$, and it follows that by varying $w$ over $X'$ we have an algebraic fiber bundle $\mathscr S^0$ over $X'$ whose fiber at $w \in X'$ is given by $\mathscr S_w^0$.

By assumption, over the connected open subset $U \subset X$ the holomorphic map $h\colon U \overset{\cong}\longrightarrow S \subset X'$ induces a holomorphic map $\theta\colon U \to \mathscr S$, which is the composition $\zeta\circ  h$, here $\zeta$ is a holomorphic section of $\mathscr S^0$ over $S$. The meromorphic map $H\colon X \dashrightarrow {Y}$ induces a meromorphic map $\Theta\colon X \dashrightarrow \mathscr Q|_{Y}$ $(= \overline{\mathscr S}|_{Y}$) such that $\Theta$ is holomorphic on $U$ and $\Theta|_U \equiv \theta$.  Hence there exists some Zariski open subset $\mathcal W \subset X-A$ containing $U$ such that $H$ is holomorphic and of maximal rank on $\mathcal W$ and such that the induced holomorphic map ${\Theta}$ takes values in the Zariski open subset $\mathscr S^0|_{Y}$ of $\mathscr S|_{Y}$, as claimed.

Write $\mathcal W = X\!-\!\mathcal A$, $\mathcal W \subset W$, $\mathcal A \supset A$. Let now $x \in {\rm Reg}(\mathcal A)$.  Since the VMRT $\mathscr C_x(X) \subset \mathbb PT_x(X)$ is projectively nondegenerate (cf. \cite{HM05}),
there exists some $[\alpha] \in \mathscr C_x(X)$ such that $\alpha \notin T_x(\mathcal A)$. Since the condition imposed on $[\alpha]$ is an open condition on $\mathscr C_x(X)$ without loss of generality we may assume that $[\alpha]$ is tangent to a general minimal rational curve (in the sense of the paragraph immediately following Theorem~\ref{hwangli}). Let now $\Lambda$ be the (unique) minimal rational curve on $X$ passing through $x$ such that $T_x(\Lambda) = \mathbb C\alpha$.  For any point $y \in \Lambda \cap \mathcal W$, $H$ is a holomorphic immersion at $y$ and $\mathscr C_{w}(X') \cap \mathbb PT_{w}({Y}')$, $w := H(y)$, is projectively equivalent to $\mathscr C_0(X) \subset \mathbb PT_0(X)$, where we may take ${Y}' = {Y}$ if ${Y}$ is smooth at $H(y)$, and in general we take ${Y}'$ to be a nonsingular irreducible branch of ${Y} \cap V$ for some neighborhood $V$ of $H(y)$ on $X'$ such that $H$ (being a holomorphic immersion at $y$) is a biholomorphism of some neighborhood ${U(y)}$ of $y$ onto ${Y}'$. From the hypothesis that $H\colon U \overset{\cong}\longrightarrow S$ maps open subsets of minimal rational curves onto open subsets of minimal rational curves of $X'$ lying on $S \subset X'$, by analytic continuation it follows that over $\mathcal W \subset X\!-\!A$, the map $H$ is a holomorphic immersion and it maps any germ of minimal rational curve onto a germ of minimal rational curve.  Thus $H$ maps the germ of $\Lambda$ at $y$ to the germ of a (unique) minimal rational curve $\ell$ of $X'$ at $w$. 

By the choice of ${\Lambda}$, ${\Lambda} \cap \mathcal W$ is the complement in ${\Lambda}$ of a finite number of points. Let now $y \in \Lambda \cap \mathcal W$ (so that in particular $H$ is an immersion at ${y}$) and such that $H({y}) \in {\rm Reg}({Y})$.  
We will apply Theorem~\ref{thickening} (the Thickening Lemma) to the minimal rational curve $\ell \subset X'$ which lies on ${Y}$. For this purpose we have to check the validity of Condition (T) {on} the pair $(\mathscr C_w({Y'}),[T_w(\ell)])$ for the germ of sub-VMRT structure $\varpi\colon \mathscr C({Y}') \to {Y}'$ for a smooth neighborhood ${Y}'$ of ${H(y)}$ on ${Y}$, $\mathscr C({Y}'):= \mathscr C(X') \cap \mathbb PT({Y}')$.  Recall that, writing $T_w(\ell) = \mathbb C\beta$, by Definition~\ref{condition-t}, $(\mathscr C_w({Y'}),[\beta])$ satisfies Condition (T) for the sub-VMRT structure $\varpi\colon \mathscr C({Y}') \to {Y}'$ on ${Y'}$ if and only if  
$$
(\dagger) \quad
T_\beta(\widetilde{\mathscr C_w}({Y}')) = T_\beta(\widetilde{\mathscr C_w}(X')) \cap T_w({Y}').
$$ 

By hypothesis the inclusion $\mathscr C_w({Y}') \subset {\mathbb PT_w(Y')}$ is projectively equivalent to the inclusion $\mathscr C_0(X) \subset {\mathbb PT_0(X)}$, hence the statement ($\dagger)$ is equivalent to the statement 
$$
(\dagger\dagger)\quad T_\gamma(\widetilde{\mathscr C_0}(X)) = T_\gamma(\widetilde{\mathscr C_0}(X')) \cap T_0(X)
$$ for $\gamma \in \widetilde{\mathscr C_0}(X)$ being a vector tangent to a general minimal rational curve on $X'$ passing through $0$. Writing $G$ resp. $G'$ for the identity component of ${\rm Aut}(X)$ resp. ${\rm Aut}(X')$, and $P \subset G$ resp. $P' \subset G'$ for the isotropy (parabolic) subgroups at $0 \in X$ resp. $0 \in X'$, we have the standard inclusions $G \subset G'$ and $P = P'\cap G \subset P'$, $X = G/P \subset G'/P' = X'$, which defines the standard embedding $\imath\colon X \hookrightarrow X'$.  Now $P'$ acts transitively on the VMRT $\mathscr C_0(X')$ for the Grassmannian $X = Gr(q,\mathbb C^{2n})$ (which is an irreducible Hermitian symmetric space of the compact type), while by Lemma \ref{special-tangents} the VMRT $\mathscr C_0(X)$ of the symplectic Grassmannian $X = SGr(q,\mathbb C^{2n})$ is almost homogeneous under the action of $P$, with a unique open $P$-orbit $\mathcal O$ consisting of projectivizations of non-zero vectors $\gamma$ tangent to general minimal rational curves passing through 0, and a unique closed ${P}$-orbit $\mathcal F = \mathscr C_0(X)-\mathcal O$ consisting of projectivizations of those $\gamma$ tangent to special minimal rational curves passing through 0.

By Proposition~\ref{admissible} $X \subset, X' \subset \mathbb P^N$ is a linear section when the Grassmannian $X'$ is identified as a projective submanifold by the Pl\"ucker embedding.  By Proposition~\ref{condition-t}, at a general point $x \in X$ and a general point $[\xi] \in \mathscr C_x(X)$, $(\mathscr C_x(X),[{\xi}])$ satisfies Condition (T) (with respect to the sub-VMRT structure $\varpi\colon\mathscr C(X) \to X$ on $X'$).  In our case by homogeneity the conclusion holds actually at any point $x \in X$ (in place of requiring $x$ to be a general point).  Thus, we may take $x = 0$, and conclude that $(\mathscr C_0(X),[\xi])$ satisfies Condition (T) for a general point $[\xi] \in \mathscr C_0(X)$.  Since the statement that Condition (T) holds for $(\mathscr C_0(X),[\xi])$ is invariant under the action of $P$ it follows that ($\dagger\dagger$) must hold everywhere on the unique open $P$-orbit $\mathcal O \subset \mathscr C_0(X)$,
hence Condition (T) holds for $(\mathscr C_0(X),[{\xi}])$ whenever ${[\xi]} \in \mathcal O$.  As a consequence Condition (T) holds for  $(\mathscr C_w({Y}'),[\beta])$, $T_w(\ell) = \mathbb C\beta$ for the sub-VMRT structure $\varpi\colon \mathscr C({Y}') \to {Y}'$ on $Y'$.

On the Grassmannian ${X'}$ the minimal rational curve $\ell \subset {X'}$ is smooth, and the normalization $\varphi\colon {\bf P}_\ell \to \ell$ is just a biholomorphism.  It follows by Theorem~\ref{thickening} that there exists some complex manifold {${\bf E}(\ell)$} containing ${\bf P}_\ell$ and a biholomorphism $\Phi\colon {\bf E}_\ell \to { \Phi({\bf E}_\ell)} \subset X'$ such that $\Phi({\bf E}_\ell) =: { Y}_\ell \subset { Y}$. We compare now the two germs of complex manifolds along rational curves given by $(X;\Lambda)$ on $X$ and $({ Y}_\ell;\ell)$ on ${ Y}$.  From our choices there is a point $y \in \Lambda$ and an open neighborhood ${U(y)}$ of $y$ on $X$, such that $H$ is holomorphic on ${U(y)}$, $H$ maps ${U(y)}$ onto a neighborhood ${ Y}'$ of $w = H(y)$ on ${ Y}_\ell$ and $\Lambda \cap {U(y)}$ onto $\ell \cap { Y}'$ { such that $H$ is VMRT-respecting on ${U(y)}$ and such that, for $u \in U(y)$, $\mathscr C_u(X) \subset \mathbb P T_u (X)$ is projectively equivalent to $\mathscr C_v({ Y}_\ell) \subset \mathbb PT_v(X')$.} On ${ Y}'$ we have {by Lemma~\ref{symplectic-vmrt} a holomorphic} distribution $E$ which is spanned at every point $v = H(u)$ by the affinization of the subset $\mathcal F_u \subset \mathscr C_u({ Y}')$ consisting of points where the projective second fundamental form of $\mathscr C_u({ Y}') \subset \mathbb PT_u({ Y}')$ fails to be surjective.  Since the latter property in projective geometry is obviously preserved by $[dH]$, it follows that $\mathcal F_v = [dH](\mathbb PD_u \cap \mathscr C_u(U(y)))$ for every point $u \in U(y)$ where $D \subset TX$ is the minimal holomorphic distribution spanned by special rational tangents.  Since the Frobenius form $\varphi_D\colon\bigwedge^2 D \to TX/D$ associated to $D \subsetneq T(X)$ is nondegenerate (in the sense as described in Corollary \ref{symplectic}) everywhere on $X$, and we have $[dH(\xi), dH(\eta)] = dH([\xi,\eta])$ for holomorphic $D$-valued vector fields on ${U(y)}$, it follows that the Frobenius form $\varphi_E\colon \bigwedge^2 E \to T_{{ Y}'}/E$ associated to the holomorphic distribution $E \subsetneq T({ Y}')$ is also everywhere nondegenerate on ${ Y}' \subset { Y(\ell)}$.  It follows by Theorem~\ref{hwangli} that, shrinking ${ Y}_\ell$ if necessary, there exists some neighborhood $\mathcal U_0$ of $\Lambda \subset X$  and a biholomorphism $\Theta_0\colon { Y}_\ell \overset{\cong}\longrightarrow\mathcal U_0$ such that $\Theta_0|_\ell\colon \ell \overset{\cong}\longrightarrow \Lambda$, {and moreover by the statement of Theorem 7.12 in \cite{HL21} $\Theta_0$ preserves VMRTs.}

{\it A priori} ${\Theta_0}$ is unrelated to $H$.  However, using ${\Theta_0}$ we may now identify ${Y(\ell)}$ as an open subset of a copy $X_1$ of $X$, and consider $H|_{{U(y)}} \colon U(y) \overset{\cong}\longrightarrow {Y}'$ as a VMRT-preserving biholomorphism between the connected open subset ${U(y)} \subset X$ and ${Y}' \subset {Y(\ell)} \subset X_1$.  It follows by the Cartan-Fubini extension theorem of \cite{HM01} that $H|_{{U(y)}}$ extends to a biholomorphism $\Psi\colon X \overset{\cong}\longrightarrow X_1$. Thus, shrinking ${Y(\ell)}$ (as a complex manifold containing $\ell$) if necessary, there exists a neighborhood $\mathcal U$ of $\Lambda$ on $X$ and a biholomorphism $\Theta\colon \mathcal U \overset{\cong}\longrightarrow {Y(\ell)}$ such that $\Theta|_{{U(y)}} \cong H|_{{U(y)}} \colon{U(y)} \overset{\cong}\longrightarrow {Y}'$, $\Theta|_\Lambda\colon \Lambda \overset{\cong}\longrightarrow \ell$.  In particular, we have proven that $H\colon X \dashrightarrow { Y} \subset X'$ is holomorphic and in fact a local biholomorphism at $x \in {\rm Reg}(\mathcal A)$.  Since $x \in \mathcal A$ is arbitrary, we conclude that $H$ is a local biholomorphism at every point $x \in X\!-\!{\rm Sing}(\mathcal A)$.  Replacing now $\mathcal A$ by ${\rm Sing}(\mathcal A)$ and repeating the argument a finite number of times we conclude that actually $H$ is everywhere holomorphic and of maximal rank on $X$, and hence $H\colon X \to { Y} \subset X'$ is a holomorphic immersion onto the projective subvariety ${ Y} \subset X'$.  Since the only possible singularities of ${ Y}$ arise from intersection of locally closed complex submanifolds, denoting by $\nu\colon \widetilde { Y} \to { Y}$ the normalization of ${ Y}$, $\widetilde { Y}$ is a projective manifold, and $H\colon X \to { Y} \subset X'$ lifts to a holomorphic covering map $H^\sharp\colon X \to \widetilde {Y}$ such that $H = \nu\circ  H^\sharp$.  As $X$ is simply connected, we conclude that $H^\sharp\colon X \to \widetilde { Y}$ is a biholomorphism, hence $H\colon X \to { Y}\subset X'$ is a birational holomorphic immersion onto ${ Y}$, as asserted.  The proof of Proposition~\ref{immersion} is complete. 
\end{proof}

{
\begin{Rem}\label{remark_section4}
\begin{enumerate}
\item[(a)] The proof in \cite{M19} that for $n \ge 3$, $(LGr_n,Gr(n,\mathbb C^{2n}))$, is a rigid pair of admissible rational homogeneous manifolds of Picard number 1 in the sense of the geometric theory of sub-VMRT structures of Mok-Zhang \cite{MZ19} can be adapted to yield the same statement for $(SGr(q,\mathbb C^{2n}),Gr(q,\mathbb C^{2n}))$ for $n \ge 3$ and $2 \le q < n$, by checking the nondegeneracy condition for substructures as given in \cite[Definition 3.1]{MZ19}, which is a modification of the nondegeneracy condition for mappings given in \cite[Proposition 2.1]{HoM10}.  (There is a second step requiring the consideration of the restriction map of global holomorphic vector fields from $Gr(q,\mathbb C^{2n})$ to $SGr(q,\mathbb C^{2n})$, which will in any event be needed and checked in the proof of Proposition~\ref{standard}.) Here we have only proven the rigidity statement only for the triple $(SGr(q,\mathbb C^{2n}),Gr(q,\mathbb C^{2n});H)$.

\item[(b)]
Writing $X = SGr(q,\mathbb C^{2n}$), $X' = Gr(q,\mathbb C^{2n}$ as in the proposition, note that we have not proven that $H$ is everywhere VMRT-respecting in the sense explained in the first paragraph of the proof of the proposition.  The latter is not clear since the VMRT-respecting property is not {\it a priori} a closed property as we vary on $X$.  Nonetheless, the stronger statement that $H\colon X \to {X}'$ is everywhere VMRT-respecting is not needed for the proof of rigidity of $(X,X';H)$.  

\item[(c)]
It will be proven in Section~\ref{Induced moduli map} that from a proper holomorphic map $f:D^{III}_q \to D^{I}_{r,s}$ satisfying $2 \le q' < 2q-1$, where $q' = {\rm min}(r,s)$, one can derive a certain moduli map $H: U \to X'$ for some connected open subset $U \subset X$, $X = SGr(q,\mathbb C^{2n})$, $X' = Gr(q,\mathbb C^{2n})$ and prove in Section~\ref{Rigidity of the induced moduli map} that it respects subgrassmannians in the sense of Definition~\ref{Def_respect subgrassmannians}, so that in particular the hypothesis $(*)$ in Proposition~\ref{immersion} is satisfied for $H: U \to X'$. The proofs in Section~\ref{Rigidity of the induced moduli map} will rely on CR geometry. 
\end{enumerate}
\end{Rem}}
 
{ 
As will be proven in Lemma~\ref{Hs}, from} the VMRT-respecting mapping $h\colon U \overset{\cong}\longrightarrow S \subset X'$, {by using $\mathbb C^*$-action on $X'$ which preserves $X$, one can obtain a holomorphic one-parameter family of VMRT-respecting holomorphic embeddings $h_s\colon U \overset{\cong}\longrightarrow S_s \subset X'$, $s\in \mathbb C^*$, $H_0 = H$. Moreover, if $H:U\to S$ extends to a holomorphic immersion $H\colon X\to X'$, then $_s$ extends to a holomorphic immersion $H_s \colon X\to X'$ such that $H_s$ restricted to a big Schubert cell converges to the standard embedding uniformly on compact subsets as $s$ tends to $0$ ({cf.} Lemma~\ref{Hs} for details).}

Recall that the holomorphic immersion $H:X\to X'$ in Proposition~\ref{immersion} restricted to a general minimal rational curve in $X$ is a biholomorphism onto a projective line in $X'$ and therefore preserves the volume of projective lines with respect to the standard metric. Due to the construction, the same is true for $H_s$, $s\in \mathbb C^*$.

\begin{Pro}\label{deformation}
Let $H\colon X \to { Y}$ be a birational holomorphic immersion onto ${ Y} \subset X'$ such that $H_*\colon H_2(X,\mathbb Z) \overset{\cong}\longrightarrow H_2(X',\mathbb Z) \cong \mathbb Z$.  Then, there exists a {one-parameter family} of birational holomorphic immersions $H_s\colon X \to { Y}_s$ onto ${ Y}_s \subset X'$, { $s\in \mathbb C^*$} such that $H_1 = H$, and such that the reduced irreducible cycles $[{ Y}_s] \in {\rm Chow}(X')$ converge as cycles to $[{ Y}_0] \in  {\rm Chow}(X')$, ${ Y}_0 \subset X'$ { being} the image of a standard embedding $H_0\colon X \overset{\cong}\longrightarrow { Y}_0 \subset X'$.
\end{Pro}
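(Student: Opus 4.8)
The plan is to run a degeneration argument on the one-parameter family $H_s\colon X \to Y_s \subset X'$, $s \in \mathbb{C}^*$, furnished by the $\mathbb{C}^*$-action as in Lemma~\ref{Hs}, and to identify its Chow-theoretic limit as $s \to 0$ with the cycle of a standard embedding. Recall from Lemma~\ref{Hs} that each $H_s$ is a birational holomorphic immersion onto $Y_s := H_s(X)$ with $H_1 = H$, that $H_s$ carries general minimal rational curves of $X$ biholomorphically onto projective lines of $X'$, and that, in Harish-Chandra coordinates on a big Schubert cell $\mathcal{C} \subset X$, the restrictions $H_s|_{\mathcal{C}}$ converge uniformly on compact subsets to the standard embedding $\imath|_{\mathcal{C}}$ as $s \to 0$. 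Since $\mathbb{C}^*$ is connected and $H_{s*}\colon H_2(X,\mathbb{Z}) \to H_2(X',\mathbb{Z})$ is a discrete invariant of $s$, it is constant and equal to the given isomorphism $H_*$; in particular each $H_s$ sends the class of a line to the class of a line, so that $[Y_s] \in H_{2\dim_{\mathbb{C}} X}(X',\mathbb{Z})$ is a fixed homology class.

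First I would pin down the degree of $Y_s$ with respect to the Pl\"ucker polarization $\mathcal{O}_{X'}(1)$. Because $H_{s*}[\ell] = [\ell']$ for minimal rational curves $\ell, \ell'$ on $X, X'$, one has $H_s^*\mathcal{O}_{X'}(1)\cdot[\ell] = \mathcal{O}_{X'}(1)\cdot[\ell'] = 1$; as $\mathrm{Pic}(X) \cong \mathbb{Z}$ is generated by the ample class $\mathcal{O}_X(1)$ with $\mathcal{O}_X(1)\cdot[\ell] = 1$, this forces $H_s^*\mathcal{O}_{X'}(1) = \mathcal{O}_X(1) = \imath^*\mathcal{O}_{X'}(1)$. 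Since $H_s\colon X \to Y_s$ is birational, this would give $\deg Y_s = \int_X c_1(H_s^*\mathcal{O}_{X'}(1))^{\dim X} = \int_X c_1(\imath^*\mathcal{O}_{X'}(1))^{\dim X} = \deg \imath(X) =: d_0$, independent of $s \in \mathbb{C}^*$.

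Next I would take the Chow limit. The assignment $s \mapsto [Y_s]$ is algebraic, being induced by the $\mathbb{C}^*$-action, and takes values in the component of $\mathrm{Chow}(X')$ parametrizing effective $\dim X$-cycles of degree $d_0$; this component is projective, so by properness the map extends across $s = 0$, yielding a well-defined limit cycle $[Y_0]$ of degree $d_0$. The uniform convergence $H_s|_{\mathcal{C}} \to \imath|_{\mathcal{C}}$ shows that over the big cell the supports converge and that $\imath(X)$ occurs in $[Y_0]$ with multiplicity exactly one, the maps $H_s|_{\mathcal{C}}$ being embeddings converging to the embedding $\imath|_{\mathcal{C}}$, so that no folding or excess multiplicity can arise. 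Hence one expects $[Y_0] = [\imath(X)] + R$ with $R$ an effective cycle supported away from $\imath(\mathcal{C})$, i.e. in the complement of a big cell $\mathcal{C}' \subset X'$ containing $\imath(\mathcal{C})$.

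The hard part will be to rule out the residual cycle $R$, that is, to show that no extra components appear ``at infinity'' in the limit. Here I would invoke the degree bookkeeping established above: since Chow limits preserve degree, $d_0 = \deg[Y_0] = \deg[\imath(X)] + \deg R = d_0 + \deg R$, whence $\deg R = 0$ and therefore $R = 0$. It would then follow that $[Y_0] = [\imath(X)]$ is the reduced irreducible cycle of a standard embedding $H_0 := \imath\colon X \overset{\cong}{\longrightarrow} Y_0 \subset X'$, completing the proof. The only genuinely delicate inputs are the construction and convergence of the family $H_s$ (deferred to Lemma~\ref{Hs}) together with the constancy of $H_{s*}$ on $H_2$; granting these, the remainder is a standard degeneration-plus-degree count, with the degree identity $\deg Y_s = d_0$ doing the essential work of forcing the limit to be irreducible and reduced.
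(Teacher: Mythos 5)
Your proposal is correct and follows essentially the same route as the paper: the paper also uses the family $H_s$ from Lemma~\ref{Hs}, the constancy of the (co)homological degree of $Y_s$ (phrased there as ${\rm Volume}(Y_s,\omega')={\rm Volume}(X,\omega)$ for a K\"ahler form normalized on minimal rational curves, which is your Pl\"ucker-degree count), the uniform convergence on a big Schubert cell to write the limit cycle as $[Y_0]+R$ with $R$ supported at infinity, and the degree/volume conservation to force $R=\emptyset$. Your added remark that the $H_s|_{\mathcal C}$ are embeddings converging to an embedding, so that $\imath(X)$ appears with multiplicity one, is a detail the paper leaves implicit but is the same argument.
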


\begin{proof}
Let $\omega$ resp. $\omega'$ be a K\"ahler form on $X$ resp. $X'$ such that minimal rational curves on $X$ resp. $X'$ are of area equal to 1. For $ s\in \mathbb C^*$, since $H_{s*}\colon H_2(X,\mathbb Z) \overset{\cong}\longrightarrow H_2(X',\mathbb Z) \cong \mathbb Z$, hence $H_s^*\colon H^2(X',\mathbb Z) \overset{\cong}\longrightarrow H^2(X,\mathbb Z) \cong \mathbb Z$, the K\"ahler forms $\omega$ and $H_s^*\omega'$ must be cohomologous, and we have
$$
{\rm Volume}({ Y}_s, \omega') = {\rm Volume}(X,\omega).
$$
On the other hand, for the standard embedding $\imath\colon X \hookrightarrow X'$ we also have $\imath^*\colon H^2(X',\mathbb Z) \overset{\cong}\longrightarrow H^2(X,\mathbb Z) \cong \mathbb Z$, so that we also have ${\rm Volume}({ Y}_0, \omega') = {\rm Volume}(X,\omega)$.  Now $H_s$ converges uniformly on compact subsets of a big Schubert cell $\mathscr S \subset { X}$ to the standard embedding $H_0\colon \mathscr S \to \mathscr S' \subset X'$, $\mathscr S' \subset X'$ being a big Schubert cell. Write $m: = \dim_\mathbb C X$.  It follows that as $m$-cycles, the reduced $m$-cycles $[{ Y}_s]$ must subconverge to the sum of the reduced $m$-cycle $[{ Y}_0]$ and some cycle $R$ with ${\rm Supp}(R) \subset X'\!-\!\mathscr S'$.  Finally, knowing that for {$s\in\mathbb C^*$}, ${\rm Volume}({Y}_s,\omega') = {\rm Volume}({ Y}_0,\omega') = {\rm Volume}(X,\omega)$ it follows that 
${\rm Volume}(R,\omega') = 0$, { which implies} that $R = \emptyset$, hence $[{Y}_s]$ converges to $[{Y}_0]$ as reduced cycles, as asserted.      
\end{proof}

{We remark that since $Y_0$ in Proposition~\ref{deformation} is a smooth variety, by the same argument in \cite{M19} $H\colon X\to X'$ is a holomorphic embedding.
Define a family $\mathcal Y := \{ (s,y) : s\in { \mathbb C}, y\in  Y_s\}$ which is a complex analytic subvariety $\mathcal Y\subset { \mathbb C}\times X'$. Since all fibers of $\mathcal Y\to { \mathbb C}$ are equidimensional smooth and reduced subvarieties of { $\mathbb C\times X'$}, $\mathcal Y \to { \mathbb C}$ is a regular family of projective submanifolds.

}

\begin{Pro}\label{standard}
The birational holomorphic immersion $H\colon X \to { Y} \subset X'$ in Proposition~\ref{deformation} is actually a standard embedding $H\colon X \overset{\cong}\longrightarrow { Y} \subset X'$ onto a complex submanifold ${ Y} \subset X'$.  In other words, regarding $X \subset X'$ by means of the standard inclusion $\imath\colon X \hookrightarrow X'$ of the symplectic Grassmannian $X = SGr(n-r,\mathbb C^{2n})$ as a subset of the Grassmannian $X' = Gr(n-r,\mathbb C^{2n})$, there exists some $\Xi \in G' = {\rm Aut_0}(X')$ such that $\Xi|_X = H$, ${ Y} = \Xi(X)$.
\end{Pro}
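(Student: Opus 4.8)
The plan is to deduce Proposition~\ref{standard} from Proposition~\ref{deformation} by showing that the orbit of the standard embedding $[Y_0]$ under $G'=\Aut_0(X')$ fills out a neighborhood of $[Y_0]$ in the space parametrizing compact complex submanifolds of $X'$. First I would recall from the construction in Lemma~\ref{Hs} that the family $\{H_s\}_{s\in\C^*}$ arises from a one-parameter subgroup $\{\delta_s\}\subset G'$ preserving $X=SGr(q,\C^{2n})$, so that $Y_s=\delta_s(Y)$ for every $s\in\C^*$. In particular all the cycles $[Y_s]$, $s\neq 0$, lie in the single orbit $G'\cdot[Y]$, while the convergence $[Y_s]\to[Y_0]$ as $s\to 0$ from Proposition~\ref{deformation} exhibits $[Y_0]$ as a point of $\overline{G'\cdot[Y]}$. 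Working in the component of the Douady space $\mathrm{Dou}(X')$ containing $[Y_0]$ (which near $[Y_0]$ maps isomorphically onto the Chow variety, since $Y_0$ is smooth, reduced and irreducible), it therefore suffices to prove that $G'\cdot[Y_0]$ is \emph{open} in $\mathrm{Dou}(X')$ at $[Y_0]$: then $G'\cdot[Y]$ meets the open set $G'\cdot[Y_0]$, whence $G'\cdot[Y]=G'\cdot[Y_0]$ and $Y=\Xi_0(Y_0)$ for some $\Xi_0\in G'$.

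To establish the openness I would identify the Zariski tangent space $T_{[Y_0]}\mathrm{Dou}(X')=H^0(Y_0,N_{Y_0/X'})$ and show that the Kodaira--Spencer image of the orbit map, namely the composite $\rho\colon H^0(X',TX')\to H^0(Y_0,TX'|_{Y_0})\to H^0(Y_0,N_{Y_0/X'})$, is surjective. If $\rho$ is onto, then $G'\cdot[Y_0]$ has dimension equal to that of the Zariski tangent space, forcing $\mathrm{Dou}(X')$ to be smooth at $[Y_0]$ with $G'\cdot[Y_0]$ open, as desired. Using the normal bundle sequence $0\to TY_0\to TX'|_{Y_0}\to N_{Y_0/X'}\to 0$, the surjectivity of $\rho$ splits into two ingredients: (i) the vanishing $H^1(Y_0,TY_0)=0$, which makes $H^0(Y_0,TX'|_{Y_0})\to H^0(Y_0,N_{Y_0/X'})$ surjective; and (ii) the surjectivity of the restriction map $H^0(X',TX')\to H^0(Y_0,TX'|_{Y_0})$ of global holomorphic vector fields. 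Point (i) is the infinitesimal rigidity of $Y_0\cong X=SGr(q,\C^{2n})$ as a rational homogeneous manifold of Picard number $1$, which I would obtain from Bott's theorem.

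The crux --- and the step I expect to be the main obstacle --- is ingredient (ii), the surjectivity of the restriction $H^0(X',TX')\to H^0(Y_0,TX'|_{Y_0})$ of vector fields from the Grassmannian $X'=Gr(q,\C^{2n})$ onto the standardly embedded symplectic Grassmannian $Y_0\cong X$; this is precisely the computation flagged in Remark~\ref{remark_section4}(a). I would carry it out by decomposing the homogeneous vector bundle $TX'|_{Y_0}$ on $Y_0=G/P$ into its irreducible $P$-summands and computing each $H^0$ by Borel--Weil--Bott, then comparing the total with the image of $H^0(X',TX')=\mathfrak{sl}(2n,\C)$ under restriction. Alternatively, using the linear-section description $X=X'\cap\mathbb P W$ from Lemma~\ref{admissible}, one can control $N_{X/X'}$ directly and reduce the surjectivity to the vanishing $H^1(X',TX'\otimes\mathcal I_{Y_0})=0$ via the ideal sheaf sequence $0\to TX'\otimes\mathcal I_{Y_0}\to TX'\to TX'|_{Y_0}\to 0$. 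The homogeneity of $Y_0\cong X$ makes these computations tractable, but the bookkeeping of weights attached to the short simple root $\alpha_q$ (when $2\le q<n$) is the delicate part, since the relevant bundles are no longer those of a Hermitian symmetric space.

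Finally, to upgrade the conclusion $Y=\Xi_0(Y_0)$ to the precise assertion $\Xi|_X=H$, I would note that $\Xi_0|_X\colon X\to Y$ and $H\colon X\to Y$ are both biholomorphisms onto $Y$, so $\phi:=(\Xi_0|_X)^{-1}\circ H$ is an automorphism of $X$. Since for $2\le q\le n$ every automorphism of $X=SGr(q,\C^{2n})$ is the restriction of an element of $G'$ stabilizing $X$ (that is, $\Aut_0(X)=PSp(n,\C)\hookrightarrow PGL(2n,\C)=G'$), the map $\phi$ extends to some $\widetilde\phi\in G'$ with $\widetilde\phi(X)=X$. Then $\Xi:=\Xi_0\circ\widetilde\phi\in G'$ satisfies $\Xi|_X=\Xi_0|_X\circ\phi=H$ and $\Xi(X)=\Xi_0(X)=Y$, which completes the proof that $H$ is a standard embedding.
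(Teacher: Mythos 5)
Your proposal is correct and follows essentially the same route as the paper: both deduce the result from the convergence $[Y_s]\to[Y_0]$ of Proposition~\ref{deformation} (with all $Y_s$, $s\neq 0$, lying in the $G'$-orbit of $Y$) together with the infinitesimal rigidity of the standard embedding $X\subset X'$, the latter reduced to a Bott--Borel--Weil computation for the normal bundle $N_{X|X'}$ with irreducible fiber $\Lambda^2U_0^*$, i.e., to the surjectivity of the restriction map $\Gamma(X',TX')\to\Gamma(X,TX'|_X)$. The only material addition is your explicit final step upgrading $Y=\Xi_0(Y_0)$ to $\Xi|_X=H$ via ${\rm Aut}_0(SGr(q,\mathbb C^{2n}))=PSp(n,\mathbb C)\subset G'$, a point the paper's proof leaves implicit.
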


\begin{proof}
By Proposition~\ref{deformation} and the remark above, there exists a one-parameter family of biholomorphism $H_s\colon X \to { Y}_s$ onto ${ Y}_s \subset X'$, {$s\in{ \mathbb C}$} such that $H_1 = H$ and $[{ Y}_s]$ converges to the reduced cycle $[{ Y}_0]$ of the image of a standard embedding $H_0$ of $X$ into $X'$.  We may take $H_0$ to be $\imath\colon X \hookrightarrow X'$ so that ${ Y}_0 = X$.  We assert that $X \subset X'$ is infinitesimally rigid as a complex submanifold.

By Lemma~5.1 in \cite{M19}, it suffices to check that the restriction map $r\colon \Gamma(X',T{X'}) \to \Gamma(X,T{X'}|_X)$ is surjective.
{Moreover by the scheme of Section~6 of \cite{M19}, it is enough to show that $\Gamma(X,N_{X|X'})$ is an irreducible representation of $\text{Aut}(X)$.
Since $N_{X|X'}$ is a homogeneous vector bundle with the fiber $\Lambda^2 U^*$ which is an irreducible homogeneous {vector} bundle over $SGr(n-r, \mathbb C^{2n})$, by the Bott-Borel-Weil Theorem, $X\subset X'$ is infinitesimally rigid.}

{Since $X$ is infinitesimally rigid, there exists $\epsilon > 0$ such that for any $s \in \mathbb C$ satisfying $|s| < \epsilon$, ${Y}_s$ must be the image $\Xi_s(X)$ for some automorphism $\Xi_s \in G'$.  Fix a complex number $s_0$ such that $|s_0| < \epsilon$. Since $Y_{s_0} = \Phi_{s_0}(Y)$ for some $\Phi_{s_0} \in G'$, we conclude that $Y = \Phi_{s_0}^{-1}({ Y}_{s_0}) = \Phi_{s_0}^{-1}(\Xi_{s_0}(X)) = \Theta(X)$ for $\Theta := \Phi_{s_0}^{-1}\circ \Xi_{s_0} \in G'$, as desired.}
\end{proof}


\section{Rigidity of subgrassmannian respecting holomorphic maps}\label{Rigidity of subgrassmannian respecting holomorphic maps} 

This section is devoted to prove {the} main technical result (Proposition~\ref{H respects}) that will be used to show the rigidity of induced moduli maps. From now on, we denote by $G$ and $G'$  the groups of automorphisms of $D_r(X)$ and $D_{r'}(X')$, respectively for $r, r'>0$.

{We restate the definition of subgrassmannian respecting holomorphic maps as given in Definition 1.4 in a local form. 

\begin{Def}\label{Def_respect subgrassmannians}
{Let $U \subset D_r(X)$ be non-empty connected open subset.}  A holomorphic map $H\colon U\to D_{r'}(X')$ is said to \emph{respect subgrassmannians} if and only if for any $Z_{\tau}\subset D_r(X)$ such that
{$U \cap Z_\tau \neq \emptyset$ and for each irreducible component $W_\tau^\alpha$ of $U \cap Z_\tau$, $\alpha \in A$, there exists $Z_{\tau'(\alpha)}\subset D_{r'}(X')$ such that
\begin{enumerate}
\item
 $H(W_\tau^\alpha) \subset Z_{\tau'(\alpha)}$ and 
\item $H|_{W_\tau^\alpha}$ extends to a standard embedding from $Z_{\tau}$ to  $Z_{\tau'(\alpha)}$. 
\end{enumerate}}
\end{Def}

\begin{Def}
A holomorphic map $H\colon Gr(a, W_1)\to Gr(b, W_2)$ is called a trivial embedding
if there exist a subspace $W_0\subset W_2$ of dimension $b-a$ and a linear embedding $\imath\colon W_1\to W_2$ such that $H(V)=W_0\oplus \imath(V)$. 
Let $N\subset Gr(a, W_1)$ be a complex submanifold {of some connected open subset $U \subset Gr(a,W_1)$.} A holomorphic map $H\colon N\to Gr(b, W_2)$ is called a trivial embedding if $H$ extends to $Gr(a, W_1)$ as a trivial embedding. 
\end{Def}

\bp\label{H respects}
Let $P\in \Sigma_r(X),~P'\in \Sigma_{r'}(X')$ and let $H\colon (D_r(X),P)\to (D_{r'}(X'),P')$ be a germ of a subgrassmannian respecting holomorphic map such that 
$$H(\Sigma_r(X))\subset \Sigma_{r'}(X')$$
and
$$
 H_*(T_P D_r(X))\not\subset T_{P'}\Sigma_{r'}(X').
$$
Suppose that the rank of $Z_{\tau}, ~\tau\in \mathcal D_0(X)$, is greater than or equal to $2$, then $H$ is a trivial embedding. 
\ep

The proof will be given in several steps. First, we will show that {the} $1$-jet of $H$ coincides with a trivial embedding and $H$ maps projective lines to projective lines. To be precise,  we will prove Lemma~\ref{one jet of H}. Note that if $X$ is of type~I or type~II, then for any projective line $L\subset D_r(X)$, there exists a subgrassmannian $Z_{\tau}$ such that $L\subset Z_{\tau}$. Since $H$ respects subgrassmannians, $H$ sends projective lines to projective lines. 
For {the} type~III case, we need the following lemma {which concerns real hyperquadrics with mixed Levi signature in Euclidean spaces and holomorphic maps which transform germs of complex lines on such real hyperquadrics to one another.   The lemma will lead to line-preserving rational maps between projective spaces. For a rational map $F: V \dashrightarrow W$ between two projective manifolds, writing $A \subset V$ for the set of indeterminacies (which is of codimension $\ge 2$), we will write ${\bf F}(V) := \overline{F(V-A)}$ for the strict transform of $V$ under $F$. We have} 

\bl\label{hyperquadric}
Let $\Sigma\subset \mathbb{C}^n$, $n\geq 3$, be a Levi nondegenerate real hyperquadric with mixed Levi signature passing through $0$ and let $H\colon(\mathbb{C}^n,0)\to (\mathbb{C}^N,0)$ be a germ of {immersive} holomorphic map which maps {connected} open pieces of complex lines in $\Sigma$ into complex lines. {Then, $H$ extends to a projective linear embedding $\widetilde H: \mathbb P^n \to \mathbb P^N$.}
\el

\bpf
{ We will prove the lemma in two steps. First we will show that $H$ maps any ({connected} open pieces of) complex lines in $\mathbb C^n$ into complex lines. Then{,} using this property we will show that $H$ extends to a projective linear embedding.}

For a point $P\in \Sigma$, let $\mathscr C_P(\Sigma)$ be the set of all complex lines in $\Sigma$ passing through $P$. We regard $\mathscr C_P(\Sigma)$ as a subset of the projectivised complex tangent space $\mathbb{P}T^{1,0}_P \Sigma$, {of complex dimension $= n-2 \ge 1$ since $n\ge 3$ by hypothesis}, by identifying a complex line $L\in \mathscr C_P(\Sigma)$ with $[T_PL]$. Since $\Sigma$ has mixed Levi signature, $\mathscr C_P(\Sigma)$ is a nondegenerate real hyperquadric in $\mathbb{P}T^{1,0}_P\Sigma$. Choose a representative of $H$ denoted again by $H$ and let ${{\rm Dom}(H)}$ be its domain {of definition}. Let $P\in \Sigma\cap {{\rm Dom}}(H)$.
By the assumption on $H$, for any $L\in \mathscr C_P( \Sigma)$, $H(L\cap {{\rm Dom}}(H))$ is contained in a complex line. Hence for any $k\geq 1$, 
	$${{\rm Span}_\mathbb C}\{j_P^k \left(H_{L}\right)\}:=
	{{\rm Span_\mathbb C}}\left\{\left (\frac{d ^{j} H^1_L}{d\zeta^j}(0),\cdots,\frac{d^{j}H^N_L}{d \zeta^j}(0)\right),~
	1\leq j\leq k\right\}
	$$ 
is of dimension $\leq 1$, where $H_L(\zeta):=H(P+\zeta v)$ for $0\neq v\in T_PL$ and $\zeta\in \mathbb C$. Since the space ${{\rm Span_\mathbb C}}\{j_P^k \left(H_{L}\right)\}$ depends meromorphically on $L \in \mathbb PT^{1,0}_P\Sigma$ and $\mathscr C_P(\Sigma)$ is a nondegenerate real hypersurface in $\mathbb PT^{1,0}_P\Sigma$, {for each integer $k \ge 1$} the dimension of ${{\rm Span_\mathbb C}}\{j_P^k \left(H_{L}\right)\}$ is less {than} or equal to $1$ for all $L\in \mathbb PT^{1,0}_P\Sigma$.
Hence for all $P\in \Sigma\cap { {\rm Dom}(H)}$ and for all $L\in \mathbb PT^{1,0}_P\Sigma$, $H$ maps $L$ into a complex line. 

Now let $T$ be a germ of a nonvanishing holomorphic vector field at $0\in \mathbb C^n$ such that ${{\bf Re}}(T)$ generates a one parameter family of CR {translations} on $\Sigma$ and let $\{\xi_\eps,~\eps\in \mathbb C\}$ be its flow for {a} sufficiently small {complex number} $\eps$. Let $P\in \Sigma$. Since $\xi_t$ for sufficiently small $t\in \mathbb R$ is a CR automorphism of $\Sigma$, for all $L\in \mathbb PT^{1,0}_P\Sigma$, $H$ maps $\xi_t(L)$ into a complex line. Since the map
$$ t\in \mathbb C\to {{\rm Span_\mathbb C}}\left\{j^k_{\xi_t(P)}\left(H_{\xi_t(L)}\right)\right\}$$
is meromorphic and $\mathbb R\subset\mathbb C$ is a maximal totally real submanifold, we obtain 
$$\dim  {{\rm Span_\mathbb C}}\left\{j^k_{\xi_t(P)}\left(H_{\xi_t(L)}\right)\right\}\leq 1,\quad \forall k\geq 1.$$
Therefore for sufficiently small $t\in \mathbb C$, $H$ maps $\xi_t(L)$ into a complex line. 

Let $\mathcal M(\mathbb P^n)$ be the set of all projective lines in $\mathbb P^n$. Then $\mathcal M(\mathbb P^n)$ is a finite dimensional complex manifold. 
{We claim that $\{ \xi_\eps(L): P\in \Sigma, ~L\in \mathscr C(T^{1,0}_P\Sigma), ~\eps\in \mathbb C\}$ is an open set in $\mathcal M(\mathbb P^n)$, where $\mathscr C(T_P^{1,0}\Sigma)$ is the set of all projective lines passing through $P$ and tangent to $\Sigma$ at $P$. Let $\mathscr C_P$ be the set of all projective lines in $\mathbb P^n$ passing through $P\in \mathbb P^n$. Then 
$$\mathscr C:=\bigcup_{P\in \mathbb P^n}\mathscr C_P$$
becomes a complex manifold with double fibration over $\mathcal M(\mathbb P^n)$ and $\mathbb P^n$. Let $\pi:\mathscr C\to \mathcal M(\mathbb P^n)$ be the natural projection. 
Since $T$ is transversal to $T^{1,0}_0\Sigma$, $\pi^{-1}(\{ \xi_\eps(L): P\in \Sigma, ~L\in \mathscr C(T^{1,0}_P\Sigma), ~\eps\in \mathbb C\})$ is a smooth fiber bundle over an open neighborhood of $0\in \mathbb C^n$ with respect to the natural projection to $\mathbb P^n$. Hence{,} to prove the claim 
it is enough to show that $\pi^{-1}\left(\{ \xi_\eps(L): P\in \Sigma, ~L\in \mathscr C(T^{1,0}_P\Sigma), ~\eps\in \mathbb C\}\right)\cap \mathscr C_0$ is open in $\mathscr C_0$. We may assume that on a neighborhood of $0\in \mathbb C^n$, $\Sigma$ is locally defined by
$$ Im~w=\sum_{j=1}^\ell |z_j|^2-\sum_{j=\ell+1}^{n-1} |z_j|^2=:\langle z, z\rangle_\ell$$
and 
$$T=\frac{\partial}{\partial w}$$
so that 
$$\xi_\eps(z,w)=(z,w+\eps).$$
As {in the} above, we identify $\mathscr C_0$ with $\mathbb P^{n-1}$. Choose a point $P=(z_0, \sqrt{-1} \langle z_0, z_0\rangle_\ell)\in \Sigma$ away from $0$.
Then $T_P^{1, 0}\Sigma$ is defined by
$$\partial w=2\sqrt{-1}\langle \partial z, z_0\rangle_\ell.$$
Choose a complex line $L$ given by
$$P+\zeta(z_0, 2\sqrt{-1}\langle z_0, z_0\rangle_\ell),\quad \zeta\in \mathbb C$$
passing through $P$ and tangent to $\Sigma$ at $P$. 
Then the parallel translation $L_1$ of $L$ given by
$$L+(0, \sqrt{-1}\langle z_0, z_0\rangle_\ell)$$
passes through $0\in \mathbb C^n$ and
$$T_0L_1=\mathbb C (z_0, 2\sqrt{-1}\langle z_0, z_0\rangle_\ell).$$
Now consider a one{-}parameter family of points $P_t:=(tz_0, \sqrt{-1}\langle tz_0, tz_0\rangle_\ell)\in \Sigma, ~t\in \mathbb C.$
Then by the same argument, the family $\{P_t\}$ generates a family of lines $\{L_t\}$ passing through $0$ such that
$$T_0 L_t=\mathbb C(z_0, 2\sqrt{-1}\langle z_0, t z_0\rangle_\ell).$$
Since $z_0$ and $t$ are arbitrary, the conclusion follows. Since $H$ maps ({connected} open pieces of) projective lines in $\{ \xi_\eps(L): P\in \Sigma, ~L\in \mathscr C(T^{1,0}_P\Sigma), ~\eps\in \mathbb C\}$ into projective lines, as a consequence, $H$ maps any ({connected} open piece of) complex line in $\mathbb P^n$ into {a} projective line. }

{
Next, we will show that $H$ extends to a projective linear embedding. Since $H$ is locally immersive at $0\in \mathbb C^n$, we may assume that 
$$H_*(T_0\mathbb C^n)=\{(x, 0) \in \mathbb C^n\times\mathbb C^{N-n}\}\subset T_0\mathbb C^N\equiv \mathbb C^N.$$
Since $H$ maps complex lines into complex lines, this implies
$$H(U)\subset\{(x, 0)\in \mathbb C^n\times\mathbb C^{N-n}\}.$$
Then we can apply Proposition 2.3.3 in \cite[(2.3)]{M99} to show that $H$ extends rationally to $\mathbb P^n$, {and the extended rational map will still be denoted by $H\colon \mathbb{P}^n\dashrightarrow \mathbb{P}^N$}.} {Denote by $E \subset \mathbb P^n$ the set of indeterminacies of $H$, and by $R^0 \subset \mathbb P^n-E$ the subvariety consisting of all points $y \in \mathbb P^n -E$ such that $\dim(dH(y)) < n$.  Then, $R:= \overline{R^0} \subset \mathbb P^n$ is a subvariety.  
Write $B:= R \cup E \subset \mathbb P^n$ and
pick $x_0 \in \mathbb P^n - B$. Let $Q \subset \mathbb P^N$ be the projective linear subspace such that $T_{H(x)}(Q) = dH(T_{x_0}(\mathbb P^n)) \cong \mathbb C^m$.  Since $H$ maps the germ $(\ell;x_0)$ of a projective line $\ell$ at $x$ to the germ $(\Lambda;H(x_0))$ of a projective line $\Lambda \subset \mathbb P^N$ at $H(x_0)$, $H(\mathbb P^n-B)$ is an open subset of $Q$ containing $H(x_0)$, {$Q = {\bf H}(\mathbb P^n)$.} For the proof of Lemma~\ref{hyperquadric}, we may take $Q = \mathbb P^n \subset \mathbb P^N$, $n = N \ge 3$.  (We note that the rest of the arguments work also for $n = N = 2$.)  }

For a line-preserving surjective rational map $H: \mathbb P^n \dashrightarrow \mathbb P^n$, $R^0 \subset \mathbb P^n-E$ is the ramification divisor of $H|_{\mathbb P^n-E}$. We call $R = \overline{R^0} \subset \mathbb P^n$ the ramification divisor of $H$. The rational map $H$ being the meromorphic extension of a line-preserving biholomorphism $H: U \overset{\cong}\longrightarrow V$ between certain connected open subsets $U, V \subset \mathbb P^n$, we can apply the same argument to $h^{-1}: U \overset{\cong}\longrightarrow V$ and conclude that $H: \mathbb P^n \dashrightarrow \mathbb P^n$ is birational.  Hence, for any rational curve $\ell$ such that $\ell \cap (X-B) \neq \emptyset$, the holomorphic map $H|_{\ell - B}$ extends to a biholomorphism from $\ell$ onto a projective line $\Lambda \subset \mathbb P^n$.
Hence, by \cite[Proposition 2.4.1]{M99} and its proof, $R = \emptyset$ and $H:\mathbb P^n \overset{\cong}\longrightarrow \mathbb P^n$ is a biholomorphism.  This completes the proof of Lemma~\ref{hyperquadric}.
\epf

\bl\label{one jet of H}
For $r>1$, let $H\colon U\subset D_r(X)\to D_{r'}(X')$ be a subgrassmannian respecting holomorphic map defined on a connected open set $U$ such that $U\cap\Sigma_r(X)\neq \emptyset$. If  
$$H(U\cap \Sigma_r(X))\subset \Sigma_{r'}(X')$$
and
\beq\label{cr_trans_3} 
 H(U)\not\subset \Sigma_{r'}(X'),
\eeq
then for each $P\in U$, there exists a trivial embedding $\widetilde H=\widetilde H_P\colon D_r(X)\to D_{r'}(X')$ 
such that 
	$$ H_*(T_P D_r(X))=\widetilde H_*(T_P D_{r}(X)).$$
Moreover, $H$ maps complex lines to complex lines. 
\el


\bpf
In the proof, we only consider the case when 
$X=LGr_n$ and $X'=Gr(q',p')$ so that $D_r(X)=SGr(n-r, \mathbb C^{2n})$ and $D_{r'}(X')=Gr(q'-r', \mathbb C^{p'+q'})$. The same argument can be applied to other cases. 

For a {Lagrangian} subspace $V_0$ {in $(\mathbb C^{2n},J_n)$}, choose a basis $\{ e_1, \ldots, e_{2n}\}$ of $\mathbb C^{2n}$ 
such that $\{e_1 + e_{n+1}, \ldots, e_n + e_{2n}\}$ is a basis of $V_0$ and $\tau\in \mathcal D_0(X)$ such that
$$Z_{\tau}=Gr(n-r, V_0)\subset D_r(X).$$
At a point  ${ {\rm Span}_{\mathbb C}\{e_1+e_{n+1}, \cdots, e_{n-r} + e_{2n-r}\}} \in Z_{\tau}$, we may take a local coordinate system 
of $Z_\tau$ such that 
$Z_{\tau}$ is locally given by $\{(x):x\in { M^{\mathbb C}(r, n-r)}\}.$
Since $H$ respects subgrassmannians, 
$H$ restricted to ${Z_{\tau}}$ is a standard embedding. 
Hence 
we may assume that
\begin{equation}\label{H}
H\big|_{Z_\tau}(x) =W_0\oplus  \left(x\right) \subset W_0\oplus Gr(n-r, W_1)
\end{equation}
or
\begin{equation}\label{H2}
H\big|_{Z_\tau}(x) =W_0\oplus  \left(x^t\right) \subset W_0\oplus Gr(r, W_1)
\end{equation}
for some subspaces $W_0$ and $W_1$.

Suppose \eqref{H} holds. Choose $V\in X$ such that $\dim V_0\cap V=n-1>n-r$.
Let  
$$Z_{\rho}=Gr(n-r, V).$$
Without loss of generality, we may assume 
$$V_0\cap V = { {\rm Span}_\mathbb C} \{e_1+e_{n+1},\ldots, e_{n-1} + e_{2n-1}\}.$$ 
Since $Z_{\tau}\cap Z_{\rho} = Gr(n-r, V_0\cap V)$ and $H$ restricted to ${Z_{\rho}}$ is also a standard embedding, by \eqref{H} with $x = \left( \begin{array}{c}
x' \\
0 \end{array}\right)$, $x'\in { M^{\mathbb C}(r-1, n-r)}$, we obtain 
$$H(Z_{\rho})\subset W_0 \oplus Gr(n-r,W)$$
for some $W$ such that $W_1\cap W$ is of codimension one in $W_1$ and $W$.
Since $D_r(X)$ is connected by chain of $Z_\rho$'s with rank $Z_{\rho}\geq 2$, we obtain
$$H(D_r(X))\subset W_0\oplus Gr(n-r, W_0^\perp).$$
Let 
$$
	X'':=Gr(q'', W_0^\perp),
$$
where $q''=q'-\dim W_0$. Then we obtain
\beq\label{H3}
	H(D_r(X))\subset W_0\oplus D_{r''}(X''),
\eeq
where $r''$ satisfies
$$ 
	D_{r''}(X'')=Gr(n-r, W^\perp_0)\cong Gr(n-r, \mathbb C^m),~m=\dim W^\perp_0.
$$
We will replace $X'$ and $r'$ with $X''$ and $r''$, still using the same notation.

Suppose \eqref{H2} holds. Similarly, 
for each $Z_\tau$, there exist $U_\tau, V_\tau\subset W_1'$ with $\dim V_\tau=n$ such that
\beq\label{in}
H(Z_\tau)=U_\tau\oplus Gr(r, V_\tau)
\eeq
and there exists an $(n-r)$ dimensional vector space $L$ independent of $\tau$ such that any $Gr(r, V_\tau)$ contains a projective space of the form $Gr(1, L+e_\tau)$ for some vector $e_\tau$.  
Let 
$$U_0=\bigcap_\tau U_\tau.$$
Since $H(Z_\tau)\in \Sigma_{r'}(X')$ for all $Z_\tau{ \subset} \Sigma_r(X)$, $U_0\oplus L$ is $I_{p',q'}$-isotropic. Choose the minimal vector space $V_0$ that contains $\bigcup_{\tau}U_\tau\oplus V_\tau.$ Write
$$ V_0=U_0\oplus L\oplus V_1,$$
where $V_1$ is orthogonal to $U_0\oplus L$ with respect to $I_{p',q'}$.
Then
$$
	H(D_r(X))\subset U_0\oplus Gr(r'', L\oplus V_1) {\cong} Gr(n-r, \mathbb C^m), \quad r''=\dim V_1,$$
{where $\cong$ in a big Schubert cell is given by $(x)\to (x^t)$.}
On the other hand, since $H(P)\in \Sigma_{r'}(X')$ for $P\in \Sigma_r(X)$, $V_1$ should be $I_{p',q'}$-isotropic. 
Therefore $H(D_r(X))\subset \Sigma_{r'}(X')$, contradicting the assumption on $H$.

From now on we assume \eqref{H} and \eqref{H3} hold. Choose {local coordinates} $(x;y;z)$ of $Gr(n-r, \mathbb{C}^{2n})$ and $(X;Y;Z)$ of $Gr(n-r, \mathbb C^m)$ such that
$\Sigma_r(X)$ is defined by \eqref{1}, \eqref{2} and 
$\Sigma_{r'}(X')$ is defined locally by
\beq\label{locally}
	-I_{n-r}-X^*X+Y^*Y+Z^*Z=0,
\eeq
where $Y\in { M^{\mathbb C}(n-r, n-r)}$, $X\in { M^{\mathbb C}(a, n-r)}$, $Z\in { 
M^{\mathbb C}(b, n-r)}$  for some $a\leq b$.
For $i,j=1,\ldots,n-r$, define
$$
\theta_i^{~j}:=\sum_{k=1}^{n-r}\overline{ y_k^i} dy_k^{~j}-\sum_{\ell=1}^{r}\overline{x_\ell^{~i}}dx_\ell^{~j}
+\sum_{\ell=1}^r\overline{z_\ell^{~i}}dz_\ell^{~j}
$$
and
$$
\Theta_i^{~j}:=\sum_{k=1}^{n-r}\overline{ Y_k^i} dY_k^{~j}-\sum_{L=1}^{a}\overline{X_L^{~i}}dX_L^{~j}+\sum_{L=1}^{b}\overline{Z_L^{~i}}dZ_L^{~j}.
$$
Then by Section~\ref{Subgrassmannians in the moduli spaces}, $\theta$ and $\Theta$ are contact forms of $\Sigma_r(X)$ and $\Sigma_{r'}(X')$, respectively which define their CR structures. 
Since the CR bundles over $\Sigma_r(X)$ and $\Sigma_{r'}(X')$ are defined by 
$$\theta_i^{~j}=0,\quad \Theta_i^{~j}=0,\quad i,j=1,\ldots,n-r,$$
we may assume that for a fixed reference point $P_0=(0; I_{n-r};0)\in \Sigma_r(X)$, 
\beq\label{tangent cr} 
T^{1,0}_{P_0}\Sigma_r(X)={\left\{dy_i^{~j}=0 \right\}},\quad T^{1,0}_{H(P_0)}\Sigma_{r'}(X')= { \left\{dY_i^{~i}=0\quad i, j=1,\ldots,n-r\right\}}.\eeq

Since $H$ preserves the CR structure, we obtain
\beq\label{basic}
H^*(\Theta_i^{~j})=0\mod \theta.
\eeq
We will omit $H^*$ in \eqref{basic} and the following equations if there is no confusion.
Let
$$
	\Theta_1^{~1}=\sum_{j,k} u_j^{~k}\theta_k^{~j}.
$$
By differentiation, we obtain
\beq\label{embed-eds}
	\sum_k d\overline{Y_k^{~1}}\wedge dY_k^{~1}-\sum_L\left(d \overline{X_L^{~1}}\wedge dX_L^{~1}-d\overline{Z_L^{~1}}\wedge dZ_L^{~1}\right)
	=\sum_{j,k,\ell,m}u_j^{~k}\left(d\overline{y_m^{~k}}\wedge dy_m^{~j}-d\overline{x_\ell^{~k}}\wedge dx_\ell^{~j}+d\overline{z_\ell^{~k}}\wedge dz_\ell^{~j}\right)
\eeq
modulo $\theta$. Choose a maximal subgrassmannian $N\subset \Sigma_r(X)$ passing through $P_0\in \Sigma_r(X)$. 
{By {\eqref{max com},} we may assume that 
$$N=\left\{(x;I_{n-r};x):x\in { M^\mathbb C(r, n-r)}\right\}.$$ 
Since $H$ maps $\Sigma_r(X)$ into $\Sigma_{r'}(X')$, $H$ maps $N$ into a maximal complex manifold in $\Sigma_{r'}(X')$. Then by \eqref{max com} and \eqref{locally}, we may assume
$$N':=H(N)\subset \left\{\left(
X ;I_{n-r};\left(\begin{array}{c}
X \\
0 \end{array}\right)\right):X\in { M^{\mathbb C}(a, n-r)}\right\}.$$
Since $H$ respects subgrassmannians, by \eqref{H}, 
$$H(x;I_{n-r};x)=\left(\left(\begin{array}{c}
x \\
0 \end{array}\right);I_{n-r};\left(\begin{array}{c}
x \\
0 \end{array}\right)\right)$$
up to $\Aut(\Sigma_r(X))$ and $\Aut(\Sigma_{r'}(X'))$.}
Define
$$
	\psi_\ell^{~j}=dz_\ell^{~j}-dx_\ell^{~j},
	\quad \ell=1,\ldots,r
$$
and
$$
	\Psi_L^{~j}=dZ_L^{~j}-dX_L^{~j},\quad L=1,\ldots,a,
$$
$$
	\Psi_L^{~j}=dZ_L^{~j},
	\quad L=a+1,\ldots, b.
$$
Since $N$ and $N'$ are integral manifolds of $\psi=0$ and $\Psi=0$, respectively and $H\colon N\to N'$ is the identity map,
we obtain
\beq\label{psi-Psi}
	\Psi=0\mod\theta, \psi
\eeq
and for $j=1,\ldots, n-r$,
$$dX_\ell^{~j}=dx_\ell^{~j}\mod \theta, \psi,\quad \ell=1,\ldots, r,$$
$$ dX_L^{~j}=0\mod \theta, \psi,\quad L>r.$$
Then on $T_{P_0}^{1,0}\Sigma_r(X)$, \eqref{embed-eds} can be written as
\beq\label{bracket-gen}
	\sum_{\ell=1}^r \overline{\Psi_\ell^{~1}}\wedge dx_\ell^{~1}+ \overline{dx_\ell^{~1}}\wedge \Psi_\ell^{~1}
	=
	\sum_{j,k,\ell,m} u_j^{~k}\left( \overline{\psi_\ell^{~k}}\wedge dx_\ell^{~j}+ \overline{dx_\ell^{~k}}\wedge \psi_\ell^{~j}\right),
	\mod \overline{\psi}\wedge\psi.
\eeq
{ 
Since the CR structure of $\Sigma_r(X)$ is bracket generating, the right{-}hand side of \eqref{bracket-gen} contains 
$dx_\ell^{~j}$ for $j>1$ and $\overline{dx_\ell^{~k}}$ for $k>1$ unless $u_j^{~k}\neq 0.$ 
Therefore we obtain }
$$
\Theta_1^{~1}=u\theta_1^{~1},
$$
where $u=u_1^{~1}$ and together with \eqref{psi-Psi} and Cartan's lemma,
$$
\Psi_\ell^{~1}=u\psi_\ell^{~1} \mod\theta,\quad \ell=1,\ldots,r.
$$

Suppose $u\equiv 0$, i.e., 
$$\Phi_1^{~1}\equiv 0.$$ 
Since $j=1$ is an arbitrary choice, we may assume 
$$\Theta_j^{~j}\equiv 0,\quad j=1,\ldots, n-r.$$
Then we obtain
$$ \Psi_\ell^{~j}=0,\mod \theta,\quad \forall j,\ell$$
and by differentiating
$$
\Theta_j^{~i}=0\mod \theta$$
and substituting $\Psi_\ell^{~j}=0$ modulo $\theta,$
we obtain
$$ \Theta_j^{~i}\equiv 0,\quad i,j=1,\ldots,n-r.$$
In particular, $H(\Sigma_r(X)\cap U)$ is an integral manifold of $\Theta\equiv 0$. Hence there exists a maximal complex manifold $M\subset \Sigma_{r'}(X')$ that contains $H(\Sigma_r(X)\cap U)$.
Since $H$ is holomorphic, by Lemma~\ref{bracket gen}, we obtain
$$H_*(T_P D_r(X))=H_*(T_P\Sigma_r(X))+JH_*(T_P\Sigma_r(X))\subset T_{H(P)}M,\quad \forall P\in \Sigma_{r}(X)\cap U.$$
Hence we obtain
$$H(U)\subset M\subset \Sigma_{r'}(X'),$$
contradicting \eqref{cr_trans_3}. 
Therefore we obtain $u\not\equiv 0$ and after dilation (See {Appendix}), we may assume that $u\equiv 1$ on an open set.
Since $\theta$ and $\Theta$ are Hermitian symmetric and $H:N\to N'$ is the identity map, by continuing the process, we obtain
$$
\Theta_i^{~j}=\theta_i^{~j},\quad i,j=1,\ldots,n-r
$$
and
\beq\label{Psi^1}
	\Psi_\ell^{~j}=\psi_\ell^{~j}\mod \theta, \quad \ell=1,\ldots, r.
\eeq

Fix $j=1$. Then after rotation (See {Appendix}), we may assume that
\beq\label{equation-0}
dX_\ell^{~1}-dx_\ell^{~1}=dZ_\ell^{~1}-dz_\ell^{~1}=0\mod \theta, \quad \ell=1,\ldots, r,
\eeq
\beq\label{equation-2}
dX_L^{~1}=dZ_L^{~1}=0\mod \theta,\{dx_\ell^{~k}, dz_\ell^{~k}: k>1\}, \quad L>r. 
\eeq
Since $H$ respects subgrassmannians, by restricting $H$ to subgrassmannians of the form $\{(x;I_{n-r};Ux):x\in { M^{\mathbb C}(r,n-r)}\}$, where $U$ is an $r\times r$ symmetric matrix, \eqref{equation-0} implies that for all $j=1,\ldots,n-r$,
\beq\label{equation-1}
dX_\ell^{~j}-dx_\ell^{~j}=dZ_\ell^{~j}-dz_\ell^{~j}=0\mod \theta, \quad \ell=1,\ldots, r.
\eeq
Moreover, since $H$ sends all rank one vectors in subgrassmannians to rank one vectors, \eqref{equation-1} applied to  \eqref{equation-2} implies
$$ dX_L^{~1}=dZ_L^{~1}=0\mod \theta, \quad L>r. $$
Since $H$ respects subgrassmannian {distributions}, this implies that for all $j=1,\ldots,n-r,$
\beq\label{dx-dz} 
dX_L^{~j}=dZ_L^{~j}=0\mod \theta, \quad L>r .
\eeq
Since $dx_\ell^{~j}, dz_\ell^{~j}$ and $dX_L^{~j}, dZ_L^{~j}$ form coframes of $T^{1,0}_{P_0}\Sigma_r(X)$ and $T^{1,0}_{H(P_0)}\Sigma_{r'}(X')$, respectively, \eqref{equation-1} and \eqref{dx-dz} imply
$$
	 H_*(T^{1,0}_{P_0}\Sigma_r(X))=T^{1,0}_{H(P_0)}\widetilde \Sigma_r
$$
where 
$$\widetilde \Sigma_r:=\Sigma_{r'}(X')\cap \left\{\left(\left(\begin{array}{c}
x \\
0 \end{array}\right);y;\left(\begin{array}{c}
z \\
0 \end{array}\right)\right):x,z\in { M^{\mathbb C}(r, n-r)}, ~y\in { M^{\mathbb C}(n-r,n-r)}\right\}.$$

Since $n-r\geq 2$,  $\ell$ and $L$ are independent of the choice of $j=1,\ldots,n-r$, by the same argument of \cite{K21}, we obtain
$$
	dX_\ell^{~j}-dx_\ell^{~j}=dZ_\ell^{j}-dz_\ell^{~j}+\xi _\ell^{~k}\theta_k^{~j}=0, \quad \ell=1,\ldots, r,
$$
for some smooth functions $\xi_\ell^{~k}$ and 
$$ dX_L^{~j}=dZ_L^{~j}=0, \quad L>r .$$
After a frame change of the form \eqref{last-change} in Appendix, we obtain
$$ dX_\ell^{~j}-dx_\ell^{~j}=dZ_\ell^{~j}-dz_\ell^{~j}=dX_L^{~j}=dZ_L^{~j}=0.$$
In particular, together with \eqref{Psi^1},
$$T_{H(P_0)}H(\Sigma_r(X))=dH(T_{P_0} \Sigma_r(X))=T_{H(P_0)}\widetilde\Sigma_r.$$ 
More generally, we can choose smooth functions $g\colon \Sigma_r(X)\to G\cap \Aut(\Sigma_r(X))$, $g'\colon \Sigma_r(X)\to G'\cap \Aut(\Sigma_{r'}(X'))$ such that 
\beq\label{rank1}
dH\circ  g(P)=g'(P)\circ  Id,\quad \forall P\in \Sigma_r(X). 
\eeq
Since {$\Sigma_r(X)$ is a generic CR manifold, we obtain} 
$$ T_PD_r(X)=T_P\Sigma_r(X)+J(T_P\Sigma_r(X)).$$
Therefore \eqref{rank1}
implies that 
\beq\label{G"-orbit}
H_*(T_{P}D_r(X))=T_{H(P)}g'(P)\cdot \widetilde D_r,\quad P\in \Sigma_r(X),
\eeq
where 
$$\widetilde D_r:=\left\{\left(\left(\begin{array}{c}
x \\
0 \end{array}\right);y;\left(\begin{array}{c}
z \\
0 \end{array}\right)\right):x,z\in { M^{\mathbb C}(r, n-r)}, ~y\in { M^{\mathbb C}(n-r,n-r)},~~y-y^t+x^tz-z^tx=0\right\}.$$
{Since the CR structure of $\Sigma_r(X)$ is homogeneous, the same computation holds for {a} general {point} $P\in \Sigma_r(X)$, i.e.,}
$H_*(T_{P}D_r(X))$ is contained in the $G'$-orbit of $T_P\widetilde D_r$ for all $P\in \Sigma_r(X)$. 
Since $H$ is holomorphic, $G'$ acts holomorphically on $TD_{r'}(X')$ and $\Sigma_r(X)$ is a generic CR manifold in $D_r(X)$, we obtain that for all $P\in D_r(X)$, $T_{H(P)}H(D_r(X))$ is contained in the $G'$-orbit of $T_P \widetilde D_r,$ i.e.,
$$T_{H(P)}H(D_r(X))=T_{H(P)}\widetilde H(D_r(X))$$
for some standard embedding $\widetilde H$. 

Now fix $P\in \Sigma_r(X)$ and choose a maximal rank one subspace $M\subset D_r(X)$ passing through $P$.
By \eqref{rank1}, $H$ sends rank one vectors in $T_P\Sigma_r(X)$ to rank one vectors and hence all vectors in
$H_*(T_PM)$ are rank one vectors. Since $H$ is holomorphic and $\Sigma_r(X)$ is nondegenerate, we obtain 
$$[H_*(v)]\subset \mathscr C_{H(P)}(Gr(n-r, \mathbb C^m)),\quad \forall v\in T_PM$$ 
Since 
$${\rm rank}~ Gr(n-r, \mathbb C^m)\geq{\rm  rank}~ Z_{\tau}\geq 2,\quad \tau\in \mathcal D_0(X)$$
and $\dim M\geq 3$, by \cite{ChHo04}, we obtain 
$$ H(M\cap \Sigma_r(X))\subset M'\cap \Sigma_{r'}(X')$$
for some maximal rank one subspace $M'$ in $Gr(n-r, \mathbb C^m)$. Furthermore 
$M\cap \Sigma_r(X)$ is a nondegenerate hyperquadric in $M$ with mixed Levi-signature and $H$ maps every projective line in $M\cap \Sigma_r(X)$ into a projective line, by Lemma~\ref{hyperquadric}, $H$ restricted to $M$ is a projective linear map. In particular, $H$ maps projective lines to projective lines.
\epf


\bl\label{exist_subgrassmannian}
For $X = LGr_n$ and $X' = Gr(q', p')$ or $X=OGr_n$ and $X'=OGr_{n'}$, 
{assuming $r > 1$ let $ U \subset D_r(X)$ be a connected open set and $H\colon U \to D_{r'}(X')$ be a subgrassmannian respecting holomorphic immersion such that 
$$H(\Sigma_r(X)\cap U)\subset \Sigma_{r'}(X')$$
and
$$
 H(U)\not\subset \Sigma_{r'}(X').
$$
Then there exists a subgrassmannian $M$ of $D_{r'}(X')$ isomorphic to $Gr(n-r, \mathbb C^{2n})$ if $X=LGr_n$, {isomorphic to $OGr(2[n/2]-2r, \mathbb C^{2n})$ if $X=OGr_n$} such that ${ H(U)} \subset
M$.}
\el
\begin{proof}
First we assume that
$X=LGr_n$ and $X'=Gr(q',p')$ so that $D_r(X)=SGr(n-r, \mathbb C^{2n})$ and $D_{r'}(X')=Gr(q'-r', \mathbb C^{p'+q'})$. 
In the proof of Lemma~\ref{one jet of H}, we can choose a subgrassmannian of $D_{r'}(X')$ isomorphic to $Gr(n-r, \mathbb C^m)$ that contains
$H(D_r(X))$. Hence we may assume that 
$D_{r'}(X')=Gr(n-r,\mathbb C^m)$. 

Let
$$ Z=H(D_r(X)).$$
For $P\in Z$, choose a unique minimal subgrassmannian $M_P$ passing through $P$ such that
\beq\label{T_PZ}
 T_P Z\subset  T_PM_P.
\eeq
By Lemma~\ref{one jet of H}, 
$M_P$ is of the form $Gr(n-r, V_P)$ for some $V_P\subset \mathbb C^m$ with $\dim V=2n.$
Therefore we can choose a Grassmannian frame $Z_1,\ldots, Z_{n-r}, X_{n-r+1},\ldots,X_{m}$ of $Gr(n-r, \mathbb C^m)$ 
such that
$$ { {\rm Span}_\mathbb C} \{Z_1,\ldots,Z_{n-r}\}=P$$
and 
$$ P+{ {\rm Span}_\mathbb C}\{X_{n-r+1},\ldots,X_{2n}\}=V_P. $$
Let  $\{\mu_\alpha^{~H}\}$ be a collection of one forms such that
$$ dZ_\alpha=\mu_\a^{~H}X_H\mod P.$$
Then by \eqref{T_PZ},
$$ T_PZ\subset \{\mu_\a^{~H}=0, H=2n+1,\ldots,m\}.$$
Furthermore, since
$$ T_PZ=H_*(T_P D_r(X))$$
for some standard embedding $H: D_r(X)\to D_{r'}(X')$, we can choose $X_H, H=n-r+1,\ldots, X_m$ such that
$$ T_PZ= \{\mu_\a^{~H}=0, H=2n+1,\ldots,m\}\cap \{\mu_\a^{~n-r+\b}-\mu_\b^{~n-r+\a}=0,\a,\b=1,\ldots n-r\}.$$
Since we choose a Grassmannian frame, we obtain
$$ d\mu_\a^{~H}=\mu_\a^{~K}\wedge \Omega_K^{~H}\mod \mu_\b^{~H},~\b=1,\ldots,n-r$$
for some one forms $\Omega_K^{~H}$ such that
$$ dX_K=\Omega_K^{~H}X_H\mod P.$$
Therefore on $TZ$, we obtain
$$0=\sum_{k=n-r+1}^{2n}\mu_\a^{~k}\wedge \Omega_k^{~H}.$$
Since $\mu_\a^{~k},~k=n-r+1,\ldots,2n$ are linearly independent for all fixed $\a$, by Cartan's lemma we obtain
$$
\Omega_k^{~H}=0 \mod \{\mu_\a^{~\ell},\ell=n-r+1,\ldots,2n\}.$$
Since $k$ is independent of $\a=1,\ldots,n-r$ and $n-r\geq 2$, we obtain
$$
\Omega_k^{~H}=0$$
which implies
$$dZ_\alpha=dX_j=0\mod V_P,\quad \a=1,\ldots,n-r,~ j=n-r+1,\ldots,2n,$$
i.e., $V_P$ is independent of $P$.

{
Now assume that $X=OGr_n$ and $X'=OGr_{n'}$ so that $D_r(X)=OGr(2[n/2]-2r, \mathbb C^{2n})$ and $D_{r'}(X')=OGr(2[n'/2]-2r',\mathbb C^{2n'})$.
Since we may regard $OGr(2[n'/2]-2r',\mathbb C^{2n'})$ as a submanifold in $Gr(2[n'/2]-2r',\mathbb C^{2n'})$, by the same argument as above, 
we obtain that there exists a subspace $W\subset \mathbb C^{2n'}$ of dimension $2n$ such that 
$$ H(D_r(X))\subset W_0\oplus Gr(2[n/2]-2r, W)$$
for some $W_0$.} {Let $a := \dim(W_0)$, $b := (2[n/2]-2r)$ so that $a + b = 2([n'/2] - 2r')$, and let the base point $P$ correspond to $W_0 \oplus E_0$, where $[E_0] \in Gr(b,W)$. In what follows let $V'$ denote any element in $Gr(b,W)$ such that $W_0\oplus V' \in H(D_r(X))$.
{Since 
$$H(D_r(X))\subset D_{r'}(X')=OGr(2[n'/2]-2r',\mathbb C^{2n'}), $$} {
we have $$S_{n'}(W_0 \oplus V';W_0\oplus V') = 0$$ 
whenever $W_0\oplus V' \in H(D_r(X))$.
In particular, $W_0 \subset \mathbb C^{2n'}$ is an $S_{n'}$-isotropic $a$-plane, $V' \subset \mathbb C^{2n'}$ is an  $S_{n'}$-isotropic $b$-plane, and $W_0$ and $V'$ are orthogonal with respect to $S_{n'}$, i.e., $S(W_0,V') = 0$. } {We claim that actually $S(W_0,W) = 0$.
From Lemma~\ref{one jet of H} it follows readily that $S_{n'}|_W$ is nondegenerate.  Suppose there exists some $w \in W$ such that $w$ is not orthogonal to $W_0$ with respect to $S_{n'}$.  Then, for any $S_{n'}$-isotropic $n$-plane $V''$ in $W$ containing $w$ $S(W_0,V'') \neq 0$, so that $[W_0 \oplus V'']\not\in OGr(2[n'/2]-2r',\mathbb C^{2n'})$, hence ${[W_0 \oplus V'']} \notin H(U)$.  Define $\mathscr S := (W_0 \oplus OGr(n-r,W))\cap OGr(2[n'/2]-2r',\mathbb C^{2n'})$. Then, $\mathscr S \subsetneq W_0 \oplus OGr(n-r,W)$, so that $\dim(H(U)) \le \dim(\mathscr S) < \dim(OGr(n-r,W)) = \dim(U)$, a contradiction since we know that $H$ is a holomorphic immersion. Our claim follows, and we conclude that $H(U)$ is an open subset of the subgrassmannian $M := W_0 \oplus OGr(n-r,W)$ isomorphic to $OGr(2[n/2]-2r,\mathbb C^{2n})$, as desired.  The proof of \ref{exist_subgrassmannian} is completed.}}
\epf

\emph{Proof of Proposition~\ref{H respects}}: \ \rm
If $X$ and $X'$ are of the same type, then as in the proof of Lemma~\ref{exist_subgrassmannian}
there exists a subgrassmannian $Y$ in $X'$ which is biholomorphic to $X$
such that $H(D_r(X)) \subset D_r(Y) $. Hence we may consider $H$ as a map from $D_r(X)$ into $D_r(X)$.
By Theorem~9 in \cite{M08b} and Lemma~\ref{one jet of H}, $H$ is an 
automorphism of $D_r(X)$. Hence 
we obtain the proposition in these cases.

{ 
From now on we assume 
$X=SGr(n-r, \mathbb C^{2n})$ 
and $X'=Gr(n-r,\mathbb C^{2n})$. 
By Lemma~\ref{one jet of H}, we may further assume $H(0;I_{n-r};0) = (0;I_{n-r};0)$
and $dH|_{(0;I_{n-r};0)}=Id$. Since by Lemma~\ref{one jet of H} $H$ is a rational map preserving minimal rational curves,  $H$ is a holomorphic immersion into $X'$ by Proposition~\ref{immersion}.
Then the following lemma and Proposition~\ref{standard} will complete the proof.

\begin{Lem}\label{Hs}
There exists a family of holomorphic maps
$\{H_s\}\colon SGr(n-r,\mathbb C^{2n})\rightarrow Gr(n-r,\mathbb C^{2n})$ with $s\in 
{ \mathbb C^*}$ which converges to a standard embedding on a big Schubert cell $\mathcal W \cong M^\mathbb C(n+r,n-r)$ as $s$ tends to $0$ with respect to the compact-open topology. {Moreover, there exists a $\mathbb C^*$-action $\Psi := \{\Psi_s\}_{s \in \mathbb C^*}$ on $Gr(n-r,\mathbb C^{2n})$ such that $\Psi$ fixes 
$(0;I_{n-r};0)$, preserves $SGr(n-r,\mathbb C^{2n}) \subset Gr(n-r,\mathbb C^{2n})$ as a set and such that $H_s(x;y;z) = \Psi_{\frac{1}{\phantom{.}s\phantom{.}}}(H(\Psi_s(x;y,z)))-(0;I_{n-r};0)$.} 
\end{Lem}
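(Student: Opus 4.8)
The plan is to realize $\Psi$ as the grading $\mathbb C^*$ of the symplectic Grassmannian at the reference point $(0;I_{n-r};0)$ and to recognize $H_s$ as the associated weighted rescaling of $H$. Concretely, let $V_*=\mathrm{Span}_{\mathbb C}\{e_a+e_{n+a}:1\le a\le n-r\}\subset(\mathbb C^{2n},J_n)$ be the isotropic $(n-r)$-plane representing $(0;I_{n-r};0)$ (this is the point singled out in Lemma~\ref{bracket gen}), and let $\{\Psi_s\}_{s\in\mathbb C^*}$ be the one-parameter subgroup in the centre of the Levi factor of the isotropy parabolic $P'\subset Sp(n,\mathbb C)$ at $V_*$; equivalently, $\Psi_s$ is the cocharacter $\mathbb C^*\to Sp(n,\mathbb C)$ fixing $V_*$ whose weights on $T_{V_*}Gr(n-r,\mathbb C^{2n})$ are $1$ on the $\mathfrak g_1$-directions and $2$ on the $\mathfrak g_2$-direction, in accordance with the depth-$2$ grading $\mathfrak g=\mathfrak g_{-2}\oplus\cdots\oplus\mathfrak g_2$ of $\mathfrak{sp}_n$ recalled in Section~\ref{Rigidity of the pair}. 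Since $\Psi_s\in Sp(n,\mathbb C)\subset GL(2n,\mathbb C)=\mathrm{Aut}(Gr(n-r,\mathbb C^{2n}))$, every $\Psi_s$ is an automorphism of $X'=Gr(n-r,\mathbb C^{2n})$ which preserves $X=SGr(n-r,\mathbb C^{2n})$ as a set and fixes $(0;I_{n-r};0)$. This establishes the ``moreover'' part.

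First I would record the action of $\Psi_s$ in the Harish-Chandra coordinates $(x;y;z)$ of the big Schubert cell $\mathcal W$: it has weight $1$ on the distribution $D\cong\mathfrak g_1$ spanned by the special rational tangents (the $(x,z)$-directions) and weight $2$ on $\mathfrak g_2\cong TX/D$ (the complementary $y$-direction). Setting $H_s:=\Psi_{1/s}\circ H\circ\Psi_s-(0;I_{n-r};0)$ then gives a holomorphic map $X\to X'$, well defined because $\Psi_s$ preserves $X$ and $H$ is defined on $X$; it fixes the origin of the recentred chart and satisfies $dH_s|_{(0;I_{n-r};0)}=\mathrm{id}$, since $\Psi_s$ and $H$ both fix $(0;I_{n-r};0)$ and $dH|_{(0;I_{n-r};0)}=\mathrm{id}$ by Lemma~\ref{one jet of H}. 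Expanding $H$ in a Taylor series at $(0;I_{n-r};0)$ and tracking weights, a monomial contributing to the $\mathfrak g_1$- (resp. $\mathfrak g_2$-) component of weighted input-degree $d$ is multiplied by $s^{d-1}$ (resp. $s^{d-2}$); the normalization $dH=\mathrm{id}$ rules out the terms with $d<1$ (resp. $d<2$) and the fixed-point property rules out constant terms, so only non-negative powers of $s$ occur. Hence $H_s$ converges uniformly on compact subsets of $\mathcal W$ as $s\to 0$ to the weighted-homogeneous leading part $H_0$, which is $\Psi$-equivariant and has identity $1$-jet.

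It remains to identify $H_0$ with a standard embedding, and this is the main obstacle. The $\mathfrak g_1$-component of $H_0$ is pinned down to the standard one by the identity $1$-jet, but its $\mathfrak g_2$-component is a priori of the form (standard linear term)$\,+\,Q$, with $Q$ a quadratic form on $\mathfrak g_1$ invisible to the $1$-jet and left undetermined by $\Psi$-equivariance alone. To control $Q$ I would use that $H$ respects subgrassmannians: this property is preserved under pre- and post-composition with the automorphisms $\Psi_s$, hence is inherited by the limit $H_0$. Restricting $H_0$ to the subgrassmannians $Z_\tau$, $\tau\in\mathcal D_0(S_0)$, through $(0;I_{n-r};0)$ — namely the copies $\{(x;I_{n-r};Ax)\}$ and $\{(Az;I_{n-r};z)\}$ with $A$ symmetric occurring in Lemma~\ref{bracket gen} — forces each $H_0|_{Z_\tau}$ to be a genuine standard embedding of Grassmannians with identity $1$-jet, hence the standard inclusion. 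As these $Z_\tau$ sweep out enough of $\mathfrak g_1$ to determine $Q$ by polarization, the weighted-degree-$\le 2$ jet of $H_0$ coincides with that of the standard embedding $\imath$; by weighted homogeneity $H_0$ is determined by this jet, so $H_0=\imath$ up to the translation by $(0;I_{n-r};0)$, i.e. $H_0$ is a standard embedding. This mirrors the corresponding step in \cite{M19}, the essential difference being that here the grading has depth $2$, so the quadratic $\mathfrak g_2$-contribution genuinely appears in the rescaling and must be eliminated via the subgrassmannian-respecting property rather than by the VMRT at a single point.
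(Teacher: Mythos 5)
Your construction of $\Psi_s$ as the grading cocharacter at $(0;I_{n-r};0)$ and of $H_s=\Psi_{1/s}\circ H\circ \Psi_s-(0;I_{n-r};0)$ agrees with the paper's, and your weight bookkeeping correctly isolates the surviving terms as $s\to 0$: the limit is $H_0(x;y;z)=(x;\,y+Q(x,z);\,z)$ with $Q$ the quadratic-in-$(x,z)$ part of $\widetilde H_Y$. The gap is in your final step, where you claim the subgrassmannians through the base point determine $Q$ ``by polarization.'' They do not. Every $Z_\tau$ through $P=(0;I_{n-r};0)$ is of the form $\{(x;I_{n-r};z):\ \text{the columns of } \left(\begin{smallmatrix}x\\ z\end{smallmatrix}\right) \text{ lie in a fixed Lagrangian } L\subset P^\perp/P\cong\mathbb C^{2r}\}$, so all of them lie in the common zero locus of the quadratic forms $(x^tz-z^tx)_{jk}$ (the symplectic pairings of pairs of columns). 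Consequently any $Q$ of the form $C\cdot(x^tz-z^tx)$ with $C$ a nonzero linear contraction vanishes identically on every $Z_\tau\ni P$ and survives your argument. Such a residual $Q$ is not harmless: an automorphism of $Gr(n-r,\mathbb C^{2n})$ fixing $P$ with identity $1$-jet has quadratic part $W\mapsto W\mu W$ whose $y$-block, restricted to $y'=0$, vanishes, so its $y$-component has no quadratic-in-$(x,z)$ term; hence $H_0$ with $Q\neq 0$ cannot be of the form $\Xi\circ\imath$, and the Lemma's conclusion would fail. (The identity $x^tz-z^tx=y^t-y$ on $SGr$ lets you rewrite such a $Q$ as linear in $y$, but that does not make $H_0$ standard.)

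The missing input is exactly the one the paper uses and which you never invoke in this step: the CR inclusion $H(\Sigma_r(X))\subset\Sigma_{r'}(X')$. Writing the defining equation of $\Sigma_{r'}(X')$ as $-I-H_X^*H_X+H_Y^*H_Y+H_Z^*H_Z=u\cdot(-I-x^*x+y^*y+z^*z)$ and applying pure holomorphic derivatives $\partial^{|\alpha|+|\beta|}/\partial x^\alpha\partial z^\beta$ at $(0;I_{n-r};0)$ kills \emph{all} pure $(x,z)$-derivatives of $H_Y$ there (equations \eqref{xz-derivatives} and \eqref{H_Y comp}), in particular $Q=0$, and the limit is then the standard inclusion on the big Schubert cell with no further case analysis. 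To repair your proof you would need to replace the polarization step by this computation (or some other argument that sees the symplectic component of $Q$, e.g.\ restricting $H_0$ to subgrassmannians \emph{not} passing through $P$, which you do not carry out).
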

}
\begin{proof}
Choose local {coordinates} $(x;y;z)$ of $Gr(n-r,\mathbb C^{2n})$ defined on a big Schubert cell $\mathcal W \cong M^{\mathbb C}_{n+r, n-r}\subset Gr(n-r,\mathbb C^{2n})$
with $x, z\in { M^{\mathbb C}(r, n-r)}, y\in { M^{\mathbb C}(n-r,n-r)}$ so that $SGr(n-r,\mathbb C^{2n})$ is defined locally by
\begin{equation}\nonumber
	y-y^t+x^tz-z^tx=0.
\end{equation}
Let $(X;Y;Z)$ be local {coordinates} of $Gr(n-r,\mathbb C^{2n})$ such that $\Sigma_{r}(Gr(n-r,\mathbb C^{2n}))$ can be expressed by
$$
-I_{n-r} - X^*X+Y^*Y + Z^*Z=0,
$$
where $Y\in M^{\mathbb C}_{n-r,n-r}$, $X\in M^{\mathbb C}_{r, n-r}$, $Z\in M^{\mathbb C}_{r, n-r}$.
Let $(H_X, H_Y, H_Z)$ be the coordinate expression of $H$ with respect to $(X;Y;Z)$.
Then by Lemma~\ref{one jet of H}, we may assume 
\beq\label{H-power}
H=(x;y;z)+O(\|(x;y-I_{n-r};z)\|^2).
\eeq
Moreover, since we have $H(\Sigma_r(X))\subset \Sigma_r(X')$, we obtain
$$-I_{n-r}-H^*_XH_X+H^*_YH_Y+H^*_ZH_Z=u\cdot\left(-I_{n-r}-x^*x+y^*y+z^*z\right)$$
for some $C^\omega$ function $u$. Hence by power series expansion, 
\begin{equation}\label{xz-derivatives}
0=-H_X^*\frac{\partial^{|\alpha|+|\beta|} H_X}{\partial x^{\alpha}\partial z^{\beta}}
+ H_Y^*\frac{\partial^{|\alpha|+|\beta|} H_Y}{\partial x^{\alpha}\partial z^{\beta}}
+H_Z^*\frac{\partial^{|\alpha|+|\beta|} H_Z}{\partial x^{\alpha}\partial z^{\beta}}
= \frac{\partial^{|\alpha|+|\beta|} H_Y}{\partial x^{\alpha}\partial z^{\beta}}
\end{equation}
at $(0;I_{n-r};0)$ for any multi-indices $\alpha$, $\beta$.
Let 
$$
H_Y= I_{n-r}+\widetilde H_Y=I_{n-r}+\sum_{|\alpha|\geq 1} B_\a w^\a,
$$ 
with $w=(x, y-I_{n-r},z)$ be the power series expansion of $H_Y$ at $(0;I_{n-r};0)$.
Then \eqref{xz-derivatives} implies
\beq\label{H_Y comp}
\widetilde H_Y=y-I_{n-r}+O(\|( x,z)\|^3+\|y-I_{n-r}\|^2).
\eeq

Now for $0\neq s\in \mathbb C$, define a holomorphic map $H_s$ on $X$ whose restriction on the big Schubert cell $M^{\mathbb C}_{n+r, n-r}\cap SGr(n-r,\mathbb C^{2n})$ is given by
\begin{equation}\nonumber
H_s(x;y;z) 
= \left( 
\frac{1}{s} H_X(w_s);
I_{n-r}+\frac{1}{s^2}\widetilde H_Y(w_s); 
\frac{1}{s}H_Z(w_s)
\right),
\end{equation}
where $w_s=(sx; s^2(y-I_{n-r}); sz)$.
In particular, 
$H_s\colon SGr(n-r, \mathbb C^{2n})\rightarrow Gr(n-r, \mathbb C^{2n})$ is a holomorphic immersion.
Furthermore, by \eqref{H-power} and \eqref{H_Y comp}, we obtain
\begin{equation}\nonumber
H_{s}(x;y;z) =(x;y;z)+ O(s),
\end{equation}
implying that
$H_{s}$ converges uniformly to $H_0(x;y;z):=(x;y;z)$ on any compact subset $K\subset M^{\mathbb C}(n+r,n-r)\cap SGr(n-r, \mathbb C^n)$
as $s$ tends to $0$.

{Defining $\Psi_s(x;y;z) := w_s+(0;I_{n-r};0) = (sx;s^2(y-I_{n-r});sz)+(0;I_{n-r};0)$  on the big Schubert cell $\mathcal W$, for $s \in \mathbb C^*$ we have $H_s(x;y;z) = \Psi_{\frac{1}{\phantom{.}s\phantom{.}}}(H(\Psi_s(x;y;z))- (0;I_{n-r};0)$.  It is clear that $\Psi := \{\Psi_s\}_{s\in \mathbb C^*}$ fixes $(0;I_{n-r};0)$ and that it is a $\mathbb C^*$ action on $\mathcal W$.  Furthermore, from the defining equation $y-y^t+x^tz-z^tx=0$ for $SGr(n-r,\mathbb C^{2n}) \cap \mathcal W$, it follows readily that $\Psi$ preserves $SGr(n-r,\mathbb C^{2n}) \cap \mathcal W$ as a set. To complete the proof of Proposition~\ref{Hs} it remains to check that each $\Psi_s$ extends to an automorphism of $Gr(n-r;\mathbb C^{2n})$ yielding hence a $\mathbb C^*$-action on the latter manifold.  

Writing $\Theta_s(x;y;z) := (sx;s^2y;sz)$ we have $\Psi_s(x,y;z) = \Theta_s(x;y-I_{n-r};z)+(0;I_{n-r};z) = T_{P_0}\circ \Theta_s\circ  T_{-P_0}$, where $P_0 = (0;I_{n-r};0)$ and $T_Q(w) = w + Q$, for $Q \in \mathcal W$, is a Euclidean translation on $\mathcal W$. {Recall} that $G' = {\rm Aut}(Gr(n-r,\mathbb C^{2n}))$.  With respect to the Harish-Chandra decomposition $\mathfrak g' = \mathfrak m^{\prime +} \oplus \mathfrak k^{\prime \mathbb C} \oplus \mathfrak m^{\prime -}$ of the Lie algebra $\mathfrak g'$ of $G'$, a Euclidean translation in Harish-Chandra coordinates extends to an element of the commutative Lie subgroup $M^{\prime +} = {\rm exp}\left(\mathfrak m^{\prime +}\right) \subset G'$, thus $\{\Psi_s\}_{s \in \mathbb C^*}$ is a conjugate of $\{\Theta_s\}_{s \in \mathbb C^*}$ in $G'$ and it suffices to check the latter is a $\mathbb C^*$-action. If in place of the coordinates $(x;y;z)$ we use the matrix form $
\Gamma = \Small\begin{pmatrix}x\\y\\z
\end{pmatrix}
 \normalsize \in M^\mathbb C(n+r,n-r)
$ 
as coordinates for points on $\mathcal W$, then $\Theta_s(\Gamma) = D_s\Gamma$, 
for some invertible (diagonal) matrix $D_s \in M^\mathbb C(n+r,n+r)$.  Now $K^{\prime\mathbb C} = {\rm exp}\left(\mathfrak k^{\prime\mathbb C}\right)$ consists of invertible linear transformations $\Gamma \mapsto A\Gamma B$ where $A$ resp $B$ is an invertible $(n+r) \times (n+r)$ resp. $(n-r) \times (n-r)$ matrix, hence $\Theta_s \in K^{\prime\mathbb C} \subset G'$ for $s \in \mathbb C^*$.  As a consequence, $\Theta = \{\Theta_s\}_{s\in \mathbb C^*}$ and hence $\Psi = \{\Psi_s\}_{s\in \mathbb C^*}$ are $\mathbb C^*$-actions on $Gr(n-r,\mathbb C^{2n})$, as desired.  The proof of Proposition~\ref{Hs} is complete.}
\end{proof} 


%

{
We note that in standard notation the $\mathbb C^*$-action $\Theta$ is generated by an element $L$ of the Cartan subalgebra $\mathfrak h' \subset \mathfrak g' \cong \mathfrak{sl}(2n,\mathbb C)$ such that ${\rm ad}(L)$ preserves the Lie subalgebra $\mathfrak g' \subset \mathfrak g$, $\mathfrak g' \cong \mathfrak{sp}(n,\mathbb C)$ and such that the restriction of ${\rm ad}(L)$ to $\mathfrak{sp}(n,\mathbb C)$ defines on the latter the structure of a graded Lie algebra associated to the marked Dynkin diagram $(C_n,\alpha_{n-r})$, in the notation of {\cite{Y93}}, which is the graded Lie algebra structure on $\mathfrak{sp}(n,\mathbb C)$ with parabolic subalgebra $\mathfrak p$ underlying the rational homogeneous manifold $G/P \cong SGr(n-r,\mathbb C^{2n})$. Thus $\mathfrak g = \mathfrak g_{-2} \oplus  \mathfrak g_{-1} 
\oplus  \mathfrak g_{0} \oplus  \mathfrak g_{1} \oplus  \mathfrak g_{2}$, $T_0(G/P) = \mathfrak g/\mathfrak p \cong \mathfrak g_1 \oplus \mathfrak g_2$, $[L,v_1] = v_1$ for $v_1 \in \mathfrak g_1$ and $[L,v_2] = 2v_2$, which explains the different exponents in $\Theta_s(x;y;z) = (s;s^2y;sz)$.  Thus ${\rm ad}(L)|_\mathfrak g$ defines the standard $\mathbb C^*$-action {$\Theta$} at $0 = eP \in G/P$ with 0 as the isolated fixed point serving as a 1-parameter group of {dilations} which replaces the 1-parameter group of dilations in the case of irreducible Hermitian symmetric spaces of the compact type in \cite{M19} defined by the Euler vector field and expressible in terms of Harish-Chandra coordinates
as scalar multiplications $\Theta_s(x) = sx$ for $s \in \mathbb C$.}

\section{Induced moduli map}\label{Induced moduli map}
{We start with some relevant general facts about subvarieties of irreducible Hermitian symmetric spaces of the compact type $M$.  A characteristic subspace $\Gamma$ of $M$ is an {\it invariantly geodesic complex submanifold} of $M$ according to \cite{MT92} in the sense that it is totally geodesic in $(M,s)$ with respect to any choice of K\"ahler-Einstein metric $s$ on $M$ (Section~\ref{Hermitian symmetric spaces}).  Equivalently, fixing a big Schubert cell $\mathcal W$, $\mathcal M \cong \mathbb C^m$ in terms of Harish-Chandra coordinates, $S \subset M$, $0 \in S$, is invariantly geodesic in $M$ if and only if for any $\gamma \in P$, $\gamma(P)\cap \mathcal W$ is a linear subspace of $\mathbb C^m$. It follows that the set of invariantly geodesic complex submanifolds of $M$ is closed under taking intersections.  In the case where $M$ is the Grassmann manifold $Gr(a,b)$, $0 = [V_0]$, writing $T_0(M) = V_0^*\otimes \mathbb C^{a+b}/V_0 =: A \otimes B$, for an invariantly geodesic complex submanifold $S \subset M$ passing through $0$ we have $T_0(S) = A'\otimes B'$, where $A' \subset A$, $B' \subset B$ are linear subspaces.  Given any family $\{S_\alpha\}$ of invariantly geodesic complex submanifolds of $Gr(a,b)$, $T_0(S_\alpha) =: A_\alpha \otimes B_\alpha$, the intersection $S : = \bigcap\left\{S_\alpha\right\}$ is determined by $T_0(S) = A \otimes B$, where $A := \bigcap\left\{A_\alpha\right\}$, $B :=\bigcap\left\{S_\alpha\right\}$. $S \subset M$ is a subgrassmannian. In the case of $M = LGr_n$, writing $T_0(M) = S^2V_0$, a characteristic subspace $\Gamma$ passing through $0 \in LGr_n$ is determined by $T_0(\Gamma) = S^2A$ for some linear subspace $A \subset V_0$, hence the intersection of any family of characteristic subspaces is necessarily a characteristic subspace.  In the case where $M = OGr_n$, writing $T_0(M) = \Lambda^2V_0$, a characteristic subspace $\Gamma$ passing through $0 \in LGr_n$ is determined by $T_0(\Gamma) = \Lambda^2A$ for some linear subspace $A \subset V_0$ of even codimension, hence the intersection $S$ of any family of characteristic subspaces passing through $0 \in M$ is determined by $T_0(S) = \Lambda^2A$. $S \subset M$ is a characteristic subspace if and only if $A \subset V_0$ is of even {codimension}, otherwise embedding $OGr_n$ into $OGr_{n+1} := M'$ as usual, $S \subset M'$ is a characteristic subspace.}

Let now $\Omega$ and $\Omega'$ be irreducible bounded symmetric domains of type I, II or III and let $f:\Omega\to\Omega'$ be a proper holomorphic map. In this section, we define induced moduli maps $f^\sharp_r$, {$f^\sharp_{r, \frac12}$}, $f^\flat_r$ and  {$f^\flat_{r,\frac12}$} on $\mathcal{D}_r(X)$, {$\mathcal{D}_{r,\frac12} (X)$}, $D_r(X)$ and  {$D_{r,\frac12}(X)$}, respectively. 

Let $r> 0$ be fixed. Consider a manifold 
$$\mathcal{U}_r(X):=\{(P, \sigma)\in X\times \mathcal{D}_r(X)\colon P\in X_\sigma\}\subset X\times  \mathcal{D}_r(X).$$
Then, there is a canonical double fibration 
$$\pi_1\colon\mathcal{U}_r(X)\to X,\quad
\pi_2\colon \mathcal{U}_r(X)\to \mathcal{D}_r(X).$$
Define 
$j\colon \mathcal{U}_r(X)\to \mathcal{G}(n_r, TX)$ with $n_r=\dim T_PX_\sigma$ by $j(P,\sigma)=T_P X_\sigma$, where $ \mathcal{G}(n_r, TX)$ is a Grassmannian bundle over $TX$.
Then, $j$ is a $G$-equivariant holomorphic embedding such that $j(\mathcal {U}_r(X))=\mathcal{NS}_r(X)$.

We will define $f^\sharp_r$ and {$f^{\sharp}_{r,\frac12}$} as follows. {For} each $\sigma\in \mathcal{D}_r(\Omega)$ and $\gamma\in \mathcal D_{r,\frac12}(\Omega)$,
define $f^\sharp_r(\sigma)$ and $f^\sharp_{r,\frac12}(\gamma)$ by 
\begin{equation}\label{intersection}
	X'_{f^\sharp_r(\sigma)}:=
	\bigcap_{\sigma'} X'_{\sigma'} {\quad \text{ and }\quad 
	X'_{f^\sharp_{r,\frac12}(\gamma)}:=
	\bigcap_{\gamma'} X'_{\gamma'},}
\end{equation}
where the intersection is taken over all characteristic subspaces $X'_{\sigma'}$ of $X'$ containing $f(\Omega\cap X_\sigma)$ {and $X'_{\gamma'}$ of $X'$ containing $f(\Omega\cap X_\gamma)$,
respectively.} {We remark that since the intersection of subgrassmannians is also a subgrassmannian, the maps $f^\sharp_r$ and $f^\sharp_{r,\frac12}$ in \eqref{intersection} are well defined. Furthermore, since $f$ is a proper holomorphic mapping and hence characteristic subdomains of $\Omega$ are mapped to characteristic subdomains of $\Omega'$ (\cite[Proposition 1.1]{Ts93}), the ranks of $X'_{f^\sharp_r(\sigma)}$ and $X'_{f^\sharp_{r,1/2}(\gamma)}$ should be strictly less than the rank of $X'$.}

Then, there exists a flag manifold $\mathcal{F}(a_r,b_r;V_{X'})$ such that $f^\sharp_r(\sigma)\in \mathcal{F}(a_r,b_r;V_{X'})$ for {a general member} $\sigma\in \mathcal{D}_r(\Omega)$,
{where $V_{X'}$ is a suitable vector space according to the type of $X'$, see Section~\ref{moduli spaces}}. 
Denote this $\mathcal{F}(a_r,b_r;V_{X'})$ by $\mathcal{F}_{i_r}(X')$, where $i_r$ is defined by
$
	i_r:=q'-a_r
$
{if $X'$ is one of $Gr(q',p')$ and $LGr_{q'}$, $i_r:=2[n'/2]-a_r$ if $X'$ is $OGr_{n'}$.  If $X'$ is one of $Gr(q', p')$ and $LGr_{q'}$, then $i_r\leq q'-1$. If $X'$ is $OGr_{n'}$, then $i_r\leq 2[n'/2]-2$.}
{Similarly, we define $\mathcal F(a_{r,\frac12}, b_{r,\frac12};V_{X'})$ and denote it by $\mathcal F_{i_{r,\frac12}}(X')$, where $i_{r,\frac12}$ is defined by $i_{r,\frac12} = 2[n'/2]-a_{r,\frac12}$.}
Define  
$$
	\mathcal{F}_{i_r}(\Omega'):=\{\sigma'\in \mathcal{F}_{i_r}(X')\colon X'_{\sigma'}\cap\Omega'\neq\emptyset\},
$${   
$$
	\mathcal{F}_{i_{r,1/2}}(\Omega'):=\{\sigma'\in \mathcal{F}_{i_{r,1/2}}(X')\colon X'_{\sigma'}\cap\Omega'\neq\emptyset\},
$$}
$$
	\mathcal{F}_{i_r}(S_m(X')):=\{\sigma'\in \mathcal{F}_{i_r}(X')\colon X'_{\sigma'}\cap S_m(X')\text{  is open in  }X'_{\sigma'}\}.
$$
{For the definition of $S_m(X')$, we refer the reader to Section~\ref{Hermitian symmetric spaces}, pp. 8.}
\bl
$f^\sharp_r\colon \mathcal{D}_r(\Omega)\to \mathcal{F}_{i_r}(\Omega')$ {and $f^\sharp_{r,\frac12} \colon \mathcal{D}_{r,\frac12}(\Omega)\to \mathcal{F}_{i_{r,\frac12}}(\Omega')$  are meromorphic maps.}
\el
\bpf
{
Since the proof for the map $f^\sharp_{r,\frac12}$ is similar to that for $f^\sharp_{r}$, we will only give a proof for $f^\sharp_{r}$.}
Consider a map $\mathcal F_{r}:\mathcal{U}_r(\Omega)\to \mathcal{F}_{i_r}(X')$ defined by
$$\mathcal F_r(P, \sigma)=f^\sharp_r(\sigma).$$
Suppose $\mathcal F_r$ is a meromorphic map. Then by taking a local holomorphic section of the fibration $\pi_2:\mathcal{U}_r(\Omega)\to \mathcal D_r(\Omega)$, we can complete the proof.

Let 
$$ \mathcal M:=\{(y, \sigma')\in X'\times \mathcal F_{i_r}(X'): y\in X'_{\sigma'}\}.$$
Then as above, there exist a double fibration 
$$\pi_1'\colon\mathcal{M}\to X',\quad
\pi_2'\colon \mathcal{M}\to \mathcal{F}_{i_r}(X')$$
and a holomorphic embedding of $\mathcal M$ into $\mathcal G(n'_r, TX')$ for $n'_r=\dim X'_{\sigma'}$. Hence we may regard $\mathcal M$ as a closed submanifold of $\mathcal G(n'_r, TX')$.

{We identify a small neighborhood of $E$ in $X'$ as a submanifold in the matrix space via the property
$T_EX'\subset \textup{Hom}(E, V_{X'}/E)$}. Fix a point $P_0\in {\Omega}$ and let $E=f(P_0)$. Let $(P, \sigma)\in \mathcal U_r(\Omega)$ for $P$ sufficiently close to $P_0$. Consider a subspace
$$ \mathcal N_{(P,\sigma)}^k:={ {\rm Span}_\mathbb C}\left\{\partial^\alpha \left(f\big|_{X_\sigma}\right )(P):|\alpha|\leq k\right\}\subset  \textup{Hom}(E,V_{X'}/E).$$
Then there exists an integer $k_0$ such that for {a general pair} $(P,\sigma)$, 
$$\mathcal N_{(P,\sigma)}^{k}=\mathcal N_{(P,\sigma)}^{k+1},\quad k\geq k_0.$$
Define
$$R_{(P,\sigma)}:={{\rm Span_\mathbb C}}\left\{ {\rm Im}(A): A\in \mathcal N_{(P,\sigma)}^{k_0}\right\},
\quad
K_{(P,\sigma)}:=\bigcap\left\{{ {\rm Ker}}(A): A\in \mathcal N_{(P,\sigma)}^{k_0}\right\}.$$
Then
$$Gr_{(P,\sigma)}:=\left\{A\in \textup{Hom}\left(E, R_{(P,\sigma)}\right):{\rm Ker}(A)\supset K_{(P,\sigma)}\right\}$$
is a linear subspace in $\textup{Hom}(E, V_{X'}/E)$ such that
$$ T_{f(P)}X'_{f^\sharp_r(\sigma)}=Gr_{(P,\sigma)}$$
for {a general pair} $(P,\sigma)$ by minimality of $X'_{f^\sharp_r(\sigma)}$.
Moreover the defining function of $Gr_{(P,\sigma)}$ depends meromorphically on the $k_0$-th jet of $f$ at $P$ and $T_PX_\sigma$. Hence the closure of 
$$ \left\{\left(P,\sigma,f(P),Gr_{(P,\sigma)}\right):(P,\sigma)\in \mathcal{U}_r(\Omega)\backslash S(f^\sharp_r) \right\}$$
in $\mathcal{U}_r(\Omega)\times\mathcal M$ is an analytic variety whose defining function depends meromorphically on the $k_0$-th jet of $f$,
where we let
$$S(f_r^\sharp):=\{(P, \sigma):  \dim Gr_{(P, \sigma)}\text{ is not maximal}\},$$
implying that $\mathcal F_r$ is a meromorphic map.
\epf

\bl \label{rational extension}
$f^\sharp_r$ has a rational extension $f^\sharp_r\colon \mathcal{D}_r(X)
\to \mathcal{F}_{i_r}(X')$ .
\el

\bpf
By using Lemma~\ref{NS}, the same proof of Proposition~2.6 in \cite{MT92} can be applied (cf. Section~\ref{Hermitian symmetric spaces}).
\epf

Since $f^\sharp_r$ is rational and $\mathcal D_r(S_k(\Omega))$ is not contained in any complex subvariety, we obtain 
$${ {\rm Dom}}(f^\sharp_r)\cap \mathcal{D}_r(S_k(X))\neq \emptyset,$$
{where $S_k(X)$ is a $G_o$-orbit consisting of boundary components of rank $k$ in the boundary of $\Omega\subset X$ (see Section~\ref{Hermitian symmetric spaces}).}

\bl\label{Z-tau-fix}
For each $k\geq r$, there exists $m_k$ depending only on $k$ such that
$$
f^\sharp_r(\mathcal{D}_r(S_k(X))\cap { {\rm Dom}}(f^\sharp_r))\subset \mathcal{F}_{i_r}(S_{m_k}(X')).
$$
\el
\bpf
We will prove the lemma when $X$ is of type~I. The same proof can be applied to other types.
 
Let $\sigma_0\in \mathcal{D}_r(S_k(X))\cap { {\rm Dom}}(f^\sharp_r)$. Then $X_{\sigma_0}\cap S_k$ is a complex manifold in $S_k$.
Therefore we can choose a totally geodesic subspace of $\Omega$ of the form $\Delta^{q-k}\times \Omega_{0}$ such that 
$X_{\sigma_0}\cap S_k=\{t_0\}\times \Omega_{0}$ for some $t_0\in (\partial\Delta)^{q-k}$. 
Choose a sequence $t_j\in \Delta^{q-r}$, $j=1,2,\ldots,$ converging to $t_0$ and let $\sigma_j\in \mathcal D_r(\Omega)$ be the characteristic subspaces such that  $X_{\sigma_j}\cap \Omega=\{t_j\}\times \Omega_{0}$. Fix a point $x_0\in \Omega_0$. By passing to a subsequence, we may assume that ${ f(t_j,x_0)},~j=1,2,\ldots,$ converges. Since $f$ is proper, the limit $y={\lim\limits_{j\to\infty}} f(t_j,x_0)$ is in the boundary of $\Omega'$. Since $\Omega'$ is convex, there exists a complex linear supporting function $H$ of $\Omega'$ such that $h(y)=0$. Since $h\circ f$ is bounded, we may assume that $h_j:=h\circ f \big|_{\{t_j\}\times \Omega_0}$ is a convergent sequence that converges to $H$.
Since $h_j$ never vanishes while its limit vanishes at $x_0$, $H$ is a trivial function, i.e., cluster points of $\{f(t_j, x): ~j=1,2,\ldots\}$ for any $x\in \Omega_0$ is in the zero set of $h$. Since $h$ is arbitrary, the limit set of $f(\{t_j\}\times \Omega_0)$ should be in a boundary component of $\Omega'$ which contains $y$. Let $S_m(X')$ be a boundary orbit containing $y$.
Since $\sigma_0\in {\rm Dom}(f^\sharp_r)$, we may assume $f^\sharp_r(\sigma_j)$ converges to $f^\sharp_r(\sigma_0).$
Then, $X'_{f^\sharp_r(\sigma_0)}$ contains the limit set of $f(\{t_j\}\times\Omega_0)$, which implies $f^\sharp_r(\sigma_0)\in \mathcal{F}_{i_r}(S'_{m}(X'))$. In particular, we obtain
$$
f^\sharp_r(\sigma)\in \mathcal{F}_{i_r}(S_{m}(X'))
$$
for {a general member} $\sigma\in \mathcal{D}_r(S_k(X))\cap {{\rm Dom}}(f^\sharp_r)).$ By continuity of $f^\sharp_r$, we obtain
$$
f^\sharp_r(\mathcal{D}_r(S_k(X))\cap {{\rm Dom}}(f^\sharp_r)))\subset \mathcal{F}_{i_r}(S_{m}(X')).
$$

Next we will show that $m$ depends only on $k$. Since $S_k(X)$ is foliated by boundary components of rank $k$, for any $\sigma\in \mathcal{D}_r(S_k)$, there exists a unique $\mu\in \mathcal{D}_k(S_k)$ such that $X_\sigma\cap S_k\subset X_\mu\cap S_k$. Then $f^\sharp_r(\sigma)$ should be contained in $f^\sharp_k(\mu)$. Hence $m$ depends only on $k$.
\epf

Now consider all moduli maps
$$
	f^\sharp_r\colon \mathcal{D}_r(X)\to \mathcal{F}_{i_r}(X'),\quad r=1,\ldots,q-1.
$$

\bl\label{increasing}
For each $r$, we have $i_{r-1}<i_r.$ Furthermore, {if $X$ is of type II,} {then $i_{r-1}<i_{r-1,1/2}<i_r$ for $r=2,\ldots,q-1$.}
\el

\bpf
By definition, we obtain $i_{r-1}\leq i_r$.
Suppose $i_{r-1}=i_{r}$. Let $\tau\in \mathcal{D}_{r-1}(\Omega)\cap {{\rm Dom}}(f^\sharp_{r-1})$ and let $\sigma\in \mathcal Z_\tau=\mathcal{Z}_{\tau}^r.$ By Lemma~\ref{Z-tau}, we obtain 
$$
	f^\sharp_r(\sigma)\in\mathcal{Z}'_{f^\sharp_{r-1}(\tau)},
$$
which implies that as a subspace of $V_{X'}$, 
$$	
	pr'\circ f^\sharp_r(\sigma)\subset pr'\circ f^\sharp_{r-1}(\tau),
$$
where 
$pr'\colon\mathcal{F}(a,b;V_{X'})\to Gr(a, V_{X'})$
is a projection map defined by 
$$
	pr'(V_1, V_2)=V_1.
$$
Since $i_r=i_{r-1}$ by assumption, we obtain
$$
	\dim pr'\circ f^\sharp_r(\sigma)=\dim pr'\circ f^\sharp_{r-1}(\tau)
$$
and hence 
$$
	pr'\circ f^\sharp_r(\sigma)=pr'\circ f^\sharp_{r-1}(\tau),
$$
i.e., $ pr'\circ f^\sharp_r$ is constant on $\mathcal {Z}_{\tau}$.
Since $\mathcal{D}_r(\Omega)$ is $\mathcal{Z}_{\tau}$-connected, we obtain that $pr'\circ f^\sharp_r$ is a constant map.
On the other hand, by Lemma~\ref{Z-tau-fix}, we obtain
$$
	f^\sharp_r(\mathcal{D}_r(X))\cap \mathcal{F}_{i_r}(S_{k}(X'))\neq \emptyset
$$
for some $k$,
which implies
$$
	pr'\circ f^\sharp_r(V)=pr'(\mu')
$$
for some fixed $\mu'\in \mathcal{F}_{i_r}(S_k(X'))$. In particular, 
$$
	f(\Omega)\subset S_k(X')
$$
contradicting the assumption that $f$ is a proper holomorphic map between $\Omega$ and $\Omega'$. 

{Now suppose $\Omega = D^{II}_n$ and $i_{r-1, \frac12} = i_r$. Then by the similar argument given above, we obtain that $pr'\circ f_r^\sharp$ is a constant map which is a contradiction.
Suppose $i_{r-1} = i_{r-1, \frac12}$. Then again by the similar argument, we obtain that $pr'\circ f_{r-1,\frac12}^\sharp$ is a constant map on $\mathcal{D}_{r-1,\frac12}(\Omega)$. Since 
$$X_\mu=\bigcup_{\sigma\in Q_\mu^{1/2}}{X_\sigma},\quad \mu\in \mathcal D_r(X),$$
$pr'\circ f^\sharp_{r}$ is also constant which is a contradiction.}
\epf
\medskip

Recall that
$$ D_r(X)=pr(\mathcal{D}_r(X)),\quad \Sigma_r(X)=pr(\mathcal{D}_r(S_r(X))),$$
where 
$pr\colon\mathcal{F}(a,b;V_X)\to Gr(a, V_X)$
is a projection map defined by 
$$
	pr(V_1, V_2)=V_1.
$$
Define 
$$
D_r(S_m(X)) : = pr(\mathcal D_r(S_m(X))).
$$
Define
$$
F_{i_r}(X'):=pr'(\mathcal{F}_{i_r}(X')),
$$
$$
{ F_{i_{r,1/2}}(X') := pr'(\mathcal F_{r,1/2}(X')),}
$$
$$
F_{i_r}(\Omega'):=pr'(\mathcal{F}_{i_r}(\Omega')),
$$
$$
{F_{i_{r,1/2}}(\Omega'):=pr'(\mathcal{F}_{i_{r,1/2}}(\Omega')),}
$$
$$
F_{i_r}(S_m(X')):=pr'(\mathcal{F}_{i_r}(S_m(X'))),
$$
where $pr'\colon\mathcal{F}(a,b;V_{X'})\to Gr(a, V_{X'})$
is a projection map defined as above. 
{$F_{i_r}(X')$ is one of $Gr(a_r, \mathbb C^{p'+q'}),~ OGr(a_r, \mathbb C^{2n'}),~SGr(a_r, \mathbb C^{2n'})$ according to the type of $X'$
{and $F_{i_{r,1/2}}(X')$ is $SGr(a_{i_{r,1/2}}, \mathbb C^{2n'})$.}
Note that $F_{i_r}(X')$, $F_{i_r}(\Omega')$ and $F_{i_r}(S_m(X'))$ can be expressed as subsets of $D_{r'}(Y)$, $D_{r'}(\Omega_Y)$ and $D_{r'}(S_{m'}(Y))$, respectively for suitable Hermitian symmetric space $Y$ and its dual bounded symmetric domain $\Omega_Y\subset Y$. For instance, if $X'$ is one of the type I and III, then we can choose $Y$ to be $X'$ itself and if $X'$ is of type II and $n'-a_r$ is odd, then we may regard $OGr(a_r, \mathbb C^{2n'})$ as a submanifold in $OGr(a_r, \mathbb C^{2n'+2})=D_{r'}(OGr_{n'+1})$ for suitable $r'$ by embedding $OGr_n$ into $OGr_{n+1}$ in a usual way.  
}

{

Suppose $X$ is of type II or III. Since $pr\colon \mathcal{D}_r(X)\to D_r(X)$ is a biholomorphic map, $f^\flat_r:=pr'\circ f^\sharp_r\circ pr^{-1}$ is a rational map on $D_r(X)$ such that 
$$
pr'\circ f^\sharp_r=f^\flat_r\circ pr.
$$
If $X$ is of type I, then we have the following lemma.

\bl\label{commute}
Suppose $i_r=i_{r-1}+1.$ Then there exists either a holomorphic or an anti-holomorphic map $f^\flat_r$ defined on a neighborhood  {U} of $\Sigma_r(X)\cap pr({ {\rm Dom}} (f^\sharp_r))$, {$f_r^\flat: U \to F_{i_r}(X')$,} such that
$$
pr'\circ f^\sharp_r=f^\flat_r\circ pr.
$$
Moreover, $f^\flat_r$ has a rational extension to $D_r(X)$. 
\el

\bpf
By Lemma~\ref{one to one}, we can define a smooth map by 
$$
	f^\flat_r:=pr'\circ f^\sharp_r\circ pr^{-1}\colon \Sigma_r(X)\cap pr({ {\rm Dom}}(f^\sharp_r))\to F_{i_r}({X'}).
$$
We will show that $f^\flat_r$ is either a CR or a conjugate CR map.
Then by Lemma~\ref{bracket gen} and analytic disc attaching method (\cite{BER99}), $f^\flat_r$ extends holomorphically or anti-holomorphically to a neighborhood of $\Sigma_r(X)\cap pr({ {\rm Dom}}(f^\sharp_r))$.

Fix a point $Z_0\in \Sigma_r\cap Dom(f^\flat_r)$. Then $(Z_0, Z_0^*)\in \mathcal {D}_r(S_r)\cap Dom(f^\sharp_r).$
Choose an open neighborhood $U$ of $(Z_0, Z_0^*)$ such that $f^\sharp_r$ is holomorphic in $U$. 
Define $F$ on $U$ by 
$$F(A,B):=pr'\circ f^\sharp_r(A, B),\quad (A,B)\in U.$$ 
Since 
$$Z\in \Sigma_r\to \phi_Z:=\langle \cdot, Z\rangle\in Gr(q-r, (\mathbb{C}^{p+q})^*)\sim Gr(p+r, \mathbb C^{p+q})$$
is a conjugate CR map,
to show that $f^\flat_r$ is CR or conjugate CR, it is enough to show that $F$ depends only on $A$ or only on $B$, respectively.
Suppose that on $U$,
$$F(A, B)=F(A, C)$$
for all $B,C$ having $B\cap C$ of codimension one in $B$ as well as in $C$. Since any two points in $Gr(p+r, \mathbb C^{p+q})$ are connected by a chain $B_i, i=1,\ldots,\ell_0$
such that $B_i\cap B_{i+1}$ is of codimension one in $B_i$ and in $B_{i+1}$, $F$ is independent of $B$. Similarly, if
$$F(A,B)=F(C, B)$$
for all $A,C$ having $A\cap C$ of codimension one in $A$ as well as in $C$, then $F$ is independent of $A$.

Assume that none of the above equalities hold, i.e.,
\beq\label{eqF}
F(A, B)\neq F(A, D),\quad F(A,B)\neq F(C,B)
\eeq
for general $A,B,C,D$ such that $(A+C, B\cap D)\in \mathcal {D}_{r-1}(X).$
We may assume that $(A+C, B\cap D)\in Dom(f^\sharp_{r-1})$.
Since 
$$X_{(A,B)}\cap X_{(C,D)}=X_{(A+C, B\cap D)},$$ 
by the definition of $f^\sharp_r$ and $f^\sharp_{r-1}$,
we obtain
$$F(A,B)+F(C,D)\subset pr'\circ f^\sharp_{r-1}(A+C, B\cap D).$$
Since $F$ is not constant and 
$$\dim F(A, B)=q'-i_r=q'-i_{r-1}-1=\dim pr'\circ f^\sharp_{r-1}(A+C, B\cap D)-1,$$
we obtain
$$F(A, B)\subsetneq F(A,B)+F(C,D)= pr'\circ f^\sharp_{r-1}(A+C, B\cap D).$$
By the same argument using \eqref{eqF}, we obtain
$$F(A,B)+F(A,D)=F(A,B)+F(C,B)= pr'\circ f^\sharp_{r-1}(A+C, B\cap D).$$
Choose another $(C',D')$ such that $(A+C', B\cap D')\in \mathcal D_{r-1}(X)$. 
Then we obtain
\beq\label{ab}
 pr'\circ f^\sharp_{r-1}(A+C', B\cap D)=F(A,B)+F(A,D)=pr'\circ f^\sharp_{r-1}(A+C, B\cap D).
\eeq
Similarly, we obtain
$$ pr'\circ f^\sharp_{r-1}(A+C', B\cap D)=F(A,B)+F(C',B)=pr'\circ f^\sharp_{r-1}(A+C', B\cap D'),$$
implying together with \eqref{ab} that
$$pr'\circ f^\sharp_{r-1}(A+C, B\cap D)=pr'\circ f^\sharp_{r-1}(A+C', B\cap D').$$
Now by fixing $(C',D')$ and changing $(A,B)$ with $(A',B')$, we obtain 
$$pr'\circ f^\sharp_{r-1}(A+C, B\cap D)=pr'\circ f^\sharp_{r-1}(A'+C', B'\cap D').$$
Since any two characteristic subspaces of rank $r-1$ is connected by a chain $(X_i, Y_i), i=1,\ldots, \ell$ such that 
$$\dim (X_i+X_{i+1})=\dim X_i+1,\quad \dim Y_i\cap Y_{i+1}=\dim Y_i-1,$$
$pr'\circ f^\sharp_{r-1}$ is constant, contradicting the assumption that $f$ is proper.
Therefore $F$ depends only on $A$ or only on $B$.

Suppose $f^\flat_r$ is a CR map. Let 
$$\Gamma^\sharp_r:=\overline{\{(x, f^\sharp_r(x))\colon x\in { {\rm Dom}}(f^\sharp_r)\}}$$
be the closure of the graph of $f^\sharp_r$. Since $f^\sharp_r$ is a rational map, $\Gamma^\sharp_r$ and its image under the map 
$$\pi=pr\times pr':\mathcal D_r(X)\times \mathcal F_{i_r}(X')\to D_r(X)\times F_{i_r}(X')$$
are irreducible closed varieties. Moreover, since $f^\flat_r$ satisfies
$$pr'\circ f^\sharp_r=f^\flat_r\circ pr,$$
we obtain
$$ \{(y,f^\flat_r(y)):y\in { {\rm Dom}}(f^\flat_r)\cap \Sigma_r\}\subset \pi(\Gamma^\sharp_r)$$
as an open set.
Therefore, $f^\flat_r$ extends to $D_r(X)$ as a meromorphic map whose graph is a dense open subset of $\pi(\Gamma^\sharp_r)$. 
Since $D_r(X)$ is a rational variety, by \cite{Chow}, $f^\flat_r$ is also rational. 
By the same argument, $f^\flat_r$ extends rationally if $f^\flat_r$ is conjugate CR. 
\epf

}

Note that since $f$ is proper, we obtain
\beq\nonumber
f^\flat_r(D_r(\Omega))\cap F_{i_r}(S_m(X'))=\emptyset,\quad \forall m\geq 1.
\eeq
Moreover, by Lemma~\ref{Z-tau-fix}, we obtain
\beq\nonumber
( f^\flat_r)^{-1}\left ( F_{i_r}(S_m(X'))\right)\subset D_r(S_\ell(X))
\eeq
for some $m\geq \ell$.

Fix $r$. For $\tau'\in \mathcal{F}_{i_s}(X')$ with $s<r$, define
$$
	\mathcal{Z}_{\tau'}':=\{\sigma'\in \mathcal{F}_{i_r}(X')\colon X_{\sigma'}'\supset X_{\tau'}'\},
	\quad
	Z_{\tau'}'=pr'(\mathcal{Z}_{\tau'}'),
$${
$$
	(\mathcal{Z}_{\tau'}^{1/2}) ':=\{\sigma'\in \mathcal{F}_{i_{r,1/2}}(X')\colon X_{\sigma'}'\supset X_{\tau'}'\},
	\quad
	(Z_{\tau'}^{1/2})'=pr'(\mathcal{Z}_{\tau'}^{1/2})
$$}
and for $\mu'\in \mathcal{F}_{i_s}({X'})$ with $s>r$, define
$$
	\mathcal{Q}_{\mu'}':=\{[\sigma']\in \mathcal{F}_{i_r}({X'})\colon X_{\sigma'}'\subset X_{\mu'}'\},
	\quad
	Q_{\mu'}'=pr'(\mathcal{Q}_{\mu'}').
$$

\bl\label{Z-tau}
Let $s<r$. Then $f^\flat_r$ satisfies
$$
	f^\flat_r(Z_{\tau}\cap { {\rm Dom}}(f^\flat_r))\subset Z'_{f^\sharp_{s}(\tau)},\quad \tau\in  \overline{\mathcal{D}_s(\Omega)}\cap { {\rm Dom}}(f^\sharp_s)
$$
and
{
$$
	f^\flat_r(Z_{\tau}\cap { {\rm Dom}}(f^\flat_r))\subset Z'_{f^\sharp_{s,1/2}(\tau)},\quad \tau\in  \overline{\mathcal{D}_{s,1/2}(\Omega)}\cap { {\rm Dom}}(f^\sharp_{s,1/2}).
$$
Similarly, $f^\flat_{r,1/2}$ satisfies 
$$
	f^\flat_{r,1/2}(Z_{\tau}\cap { {\rm Dom}}(f^\flat_{r,1/2}))\subset Z'_{f^\sharp_{s}(\tau)},\quad \tau\in  \overline{\mathcal{D}_s(\Omega)}\cap { {\rm Dom}}(f^\sharp_s)
$$
and
$$
	f^\flat_{r,1/2}(Z_{\tau}\cap { {\rm Dom}}(f^\flat_{r,1/2}))\subset Z'_{f^\sharp_{s,1/2}(\tau)},\quad \tau\in  \overline{\mathcal{D}_{s,1/2}(\Omega)}\cap { {\rm Dom}}(f^\sharp_{s,1/2}).
$$
}
\el
\bpf
First assume that $\tau\in \mathcal{D}_s(\Omega).$
Choose $\sigma\in \mathcal{D}_r(\Omega)$ such that $\sigma\in \mathcal Z_{\tau}$, i.e., 
$$\emptyset\neq X_\tau\cap\Omega\subset X_\sigma\cap\Omega.$$
Since
$$
	f(X_\tau\cap \Omega)\subset f(X_\sigma\cap \Omega),
$$
$f(X_\tau\cap\Omega)$ is contained in any characteristic subspace containing $f(X_\sigma\cap \Omega)$. Since $X'_{f^\sharp_s(\tau)}$ is the intersection of all characteristic subspaces containing $f(X_\tau\cap\Omega)$, we obtain
$$
X'_{f^\sharp_s(\tau)}\subset Y
$$  
for any characteristic subspace $Y$ containing $f(X_\sigma\cap\Omega).$
Since $f^\sharp_r(\sigma)$ is the intersection of all characteristic subspaces containing $f(X_\sigma\cap\Omega)$, we obtain
$$
	X'_{f^\sharp_s(\tau)}\subset X'_{f^\sharp_r(\sigma)},
$$
i.e.,
\begin{equation}\label{Z-tau-1}
	pr'(f^\sharp_r(\sigma))\in Z'_{f^\sharp_s(\tau)},
\end{equation}
which implies
$$
	f^\flat_r(Z_{\tau}\cap { {\rm Dom}}(f^\flat_r))\subset 
	Z'_{f^\sharp_s(\tau)}.
$$

Let $\tau\in \partial \mathcal{D}_s(\Omega)\cap { {\rm Dom}}(f^\sharp_s)$. Choose a sequence $\tau_j, ~j=1,2,\ldots$ in $ \mathcal{D}_s(\Omega)\cap { {\rm Dom}}(f^\sharp_s)$ that converges to
$\tau$. Since $pr(\sigma)\in  Z_{\tau}$ if and only if $pr(\tau)\subset pr(\sigma)$ as subspaces of $V_X$, for any $pr(\sigma)\in Z_{\tau},$ there exists a sequence $pr(\sigma_j)\in Z_{\tau_j},~j=1,2,\ldots,$ that converges to $pr(\sigma)$. By \eqref{Z-tau-1}, we obtain
$$
	pr'(f^\sharp_s(\tau_j))\subset pr'(f^\sharp_r(\sigma_j)).
$$ 
By taking {limits}, we obtain
$$
	pr'(f^\sharp_s(\tau))\subset pr'(f^\sharp_r(\sigma)),
$$
i.e.,
$$ 
	pr'(f^\sharp_r(\sigma))\in Z'_{f^\sharp_s(\tau)}.
$$
{
The same argument can be applied to other cases, which completes the proof.
}
\epf

Similarly, we obtain
\bl\label{Q-mu}
Let $s>r$.  Then $f^\flat_r$ satisfies
$$
	f^\flat_r(Q_{\tau}\cap { {\rm Dom}}(f^\flat_r))\subset Q'_{f^\sharp_{s}(\tau)},\quad \tau\in  \overline{\mathcal{D}_s(\Omega)}\cap { {\rm Dom}}(f^\sharp_s)
$$
and
{
$$
	f^\flat_r(Q_{\tau}\cap { {\rm Dom}}(f^\flat_r))\subset Q'_{f^\sharp_{s,1/2}(\tau)},\quad \tau\in  \overline{\mathcal{D}_{s,1/2}(\Omega)}\cap { {\rm Dom}}(f^\sharp_{s,1/2}).
$$
Similarly, $f^\flat_{r,1/2}$ satisfies 
$$
	f^\flat_{r,1/2}(Q_{\tau}\cap { {\rm Dom}}(f^\flat_{r,1/2}))\subset Q'_{f^\sharp_{s}(\tau)},\quad \tau\in  \overline{\mathcal{D}_s(\Omega)}\cap { {\rm Dom}}(f^\sharp_s)
$$
and
$$
	f^\flat_{r,1/2}(Q_{\tau}\cap { {\rm Dom}}(f^\flat_{r,1/2}))\subset Q'_{f^\sharp_{s,1/2}(\tau)},\quad \tau\in  \overline{\mathcal{D}_{s,1/2}(\Omega)}\cap { {\rm Dom}}(f^\sharp_{s,1/2}).
$$
}
\el

{
\bl\label{b-comp}
Let $\Omega_\rho$ be a general rank $s$ boundary component of $\Omega$
and let $\sigma\in \mathcal D_r(S_s(X))$ be a general point such that $\Omega_\sigma\subset \Omega_\rho$.
Suppose there exists a boundary component $\Omega'_{\mu'}$ of $\Omega'$ such that 
$$\Omega_{f^\sharp_r(\sigma)}'\subset\Omega'_{\mu'}.$$
Then 
for all general $\nu\in \mathcal D_r(S_s(X))$ such that $\Omega_\nu\subset\Omega_\rho,$
$$\Omega_{f^\sharp_r(\nu)}'\subset \Omega'_{\mu'}.$$
As a consequence,
$$f^\flat_r(Q_\rho\cap {\rm Dom}(f^\flat_r))\subset Q'_{\mu'}.$$
\el

\bpf
Let $\Omega_\nu\subset\Omega_\rho.$
Choose a sequence $\{\rho_j\}_j\subset \mathcal D_s(\Omega)\cap{\rm Dom}(f^\sharp_s)$ as in the proof of Lemma~\ref{Z-tau-fix} that converges to $\rho.$ Since $\Omega_\sigma$ and $\Omega_\nu$ are contained in $\Omega_\rho$, we can choose sequences $\{\sigma_j\}_j$ and $\{\nu_j\}_j$ converging to $\sigma$ and $\nu$, respectively such that $\Omega_{\sigma_j}\cup \Omega_{\nu_j} \subset \Omega_{\rho_j}$.
Since $\Omega_{\sigma_j}$ and $\Omega_{\nu_j}$ are contained in the same characteristic subdomain of $\Omega$, we can choose $x_j\in \Omega_{\sigma_j}$ and $y_j\in \Omega_{\nu_j}$ such that Kobayashi distance between $x_j$ and $y_j$ is bounded above by a fixed constant $C$ independently of $j$.  
Since $f$ is holomorphic, Kobayashi distance between $f(x_j)$ and $f(y_j)$ is bounded above by the same constant $C$. Therefore any cluster points of $\{f(x_j)\}$ and $\{f(y_j)\}$ should be contained in the same boundary component. 
Hence by the same argument as in the proof of Lemma~\ref{Z-tau-fix}, $\Omega'_{f^\sharp_r(\sigma)}$ and $\Omega'_{f^\sharp_r(\nu)}$ should be contained in the same boundary component.
\epf
}

%
%
%
%
%
%
%
%
%
%
%
%
%

\section{Rigidity of the induced moduli map}\label{Rigidity of the induced moduli map}

Let $(\Omega, \Omega')$ be a pair of bounded symmetric domains with rank $q$ and $q'$, respectively that satisfies the conditions in Theorem~\ref{main} or Theorem~\ref{nonexistence}. Suppose {that $X$ and $X'$ are one of the type I and III, $i_r\geq i_{r-1}+2$ for all $r=1,\ldots,q-1$,} where we let $i_0=0$.
Since
$$i_{q-1}\leq q'-1<2q-2=2(q-1),$$ 
this is impossible. Hence there exists $r\geq 1$ such that $i_r=i_{r-1}+1$. 
{Similarly, by Lemma \ref{increasing}, we obtain that
if $ X$ and $X'$ are of the type II,
{then $2\leq i_r\leq 2(2q-3)$ and there exists $ r$ such that $i_{r}=i_{r-1,\frac12}+1$ or $i_{r, \frac12}=i_{r}+1$.} 
If $X$ is of the type II, $X'$ is one of the type I and III, then the only possible case is $q'=2[n/2]-1$ and $i_1=1$, $i_{r}=i_{r-1}+2,~r>1.$
In this section, we will show the rigidity of the induced moduli map $f^\flat_r$ for such $r$. More precisely, we will prove the following.

\bl\label{rigid moduli}
There exists $r$ such that $f^\flat_r$ or $\overline{f^\flat_r}$ extends to a trivial embedding. 
\el

The proof of Lemma~\ref{rigid moduli} will be given in several steps. 
Let $r$ be an integer such that
\beq\label{condition in lemma}
i_r=	i_{r-1}+1
\eeq
{whenever $X'$ is of type I or III.}
{If $X$ and $X'$ are both of type II, then we let $r=1$ if $i_1=2$ and 
we let $1<r$ be an integer such that
\beq\nonumber
 i_{r-1,\frac 12}=i_{r-1}+1\quad  {\text or }\quad  i_{r}=i_{r-1,\frac12}+1
\eeq
if $i_1>2$. 

{ From now on we assume that $f^\flat_r$ is holomorphic. The same argument can be applied to the case when $X$ is of type I and $f^\flat_r$ is anti-holomorphic.}
}

\medskip
\noindent{\bf Proof of Lemma~\ref{rigid moduli} when $r=1${:}} In this case 
we obtain 
$$f^\flat_1(D_1(X))\subset pr'(\mathcal D_1(X')).$$
In particular,
$f$ sends minimal discs of $\Omega$ into balls in $\Omega'$. Hence by \cite{M08b}, and \cite{N15a}, $f$ is a totally geodesic isometric embedding and preserves the variety of minimal rational tangents. 
{Let $0\in \Omega$ be a general point. Assume that $f(0)=0$. Since $df$ preserves VMRT, $df_0:T_0(X)\to T_0(X')$ is an embedding that preserves rank one vectors.
For instance, if $X=LGr_n$ and $X'=Gr(q',\mathbb C^{p'+q'})$, then $df_0$ satisfies
$$
	[df_0][S^2 v]=[a\otimes b]
$$
for some $a$ and $b$. 
Consider 
$$
	[df_0][S^2 (v_0+t v_1)]=[a_t\otimes b_t],\quad t\in \mathbb R.
$$
By comparing the coefficient of $t^k$,} {we obtain that either one of $a_t$ and $b_t$ is constant or $a_t=a_0+ta_1$ and $b_t=b_0+tb_1$. In the first case, we obtain 
that $[df_0]$ maps $\mathbb PT_0X$ into $\mathscr C_0(X')$.}  {Since the holomorphic map $f: \Omega \to \Omega'$ is already known be a totally geodesic isometric embedding, it would follow that $S := f(\Omega) \subset \Omega'$ is a Hermitian symmetric subspace of rank-1, which is impossible given that $\Omega$ is not biholomorphic to a complex unit ball.}  Hence the second case holds. }{
Since $v_0$ and $v_1$ are arbitrary, we obtain 
$$ [df_0][S^2 v]=[L_1(v)\otimes L_2(v)]$$
for some linear embeddings $L_1$ and $L_2$. After composing {with} a suitable automorphism of $X'$, we may assume {without loss of generality} 
$$
	[df_0][S^2 v]=[\imath_1(v)\otimes \imath_2 (v)],$$
where $\imath_1:\mathbb C^n\to \mathbb C^{p'}$ and $\imath_2:\mathbb C^n\to \mathbb C^{q'}$ are trivial embeddings. Since $f$ is an isometric embedding and the set of all rank one vectors spans $T_0(X)$, this implies that 
$f: D^{III}_n\to D^I_{p',q'}$ is a trivial embedding. The same argument can be applied to {the} other cases.}

\medskip
\noindent{\bf Proof of Lemma~\ref{rigid moduli} when $2\leq r<q-1${:}}
{In} this case, as subgrassmannians in $D_r(X)$ and $F_{i_r}(X')$, respectively, we have
\beq\label{rank 2}
{\rm rank}~ Z_{\tau}\geq 2,\quad \tau\in \mathcal D_0(X)
\eeq
and
$$
{\rm rank}~Z'_{\tau'}\geq 2, \quad \tau'\in \mathcal D_0(X').
$$
If $X$ and $X'$ are of type II, then as subgrassmannians in $D_{r-1}(X)$ and $F_{i_{r-1}}(X')$, respectively, we have
\beq\nonumber
{\rm rank}~ Z_{\tau}\geq 2,\quad \tau\in \mathcal D_0(X)
\eeq
and
$$
{\rm rank}~Z'_{\tau'}\geq 2, \quad \tau'\in \mathcal D_0(X').
$$
Therefore the following two lemmas and Lemma~\ref{H respects} will complete the proof.

\bl\label{lines}
{
If $X$ is of type I or type III, then $f^\flat_r\colon {\rm Dom}(f^\flat_r)\subset D_r(X)\to F_{i_r}(X')$
respects subgrassmannian distributions.
If $X$ is of type II, then
 $f^\flat_r\colon {\rm Dom}(f^\flat_r)\subset D_r(X)\to F_{i_r}(X')$
or $f^\flat_{r-1}\colon {\rm Dom}(f^\flat_{r-1})\subset D_r(X)\to F_{i_{r-1}}(X') $
respects subgrassmannian distributions.
}
\el

\bpf
Suppose that $X$ is of the type I or III and $i_r=i_{r-1}+1$. Then by Table~\ref{table 2} and Lemma~\ref{Z-tau}, we can show that $f^\flat_r$ maps all rank one vectors in $T Z_{\tau}$, $\tau\in \mathcal D_0(X)$ into rank one vectors in $T Z'_{f^\sharp_0(\tau)}$.
Then by Mok's result (\cite{M08b}) and \eqref{rank 2}, we obtain that either $f^\flat_r$ restricted to each general maximal subgrassmannians in $D_r(X)$ is a standard embedding or the {image} of $f^\flat_r$ is contained in a fixed rank one subspace in $F_{i_r}(X')$. But since $f$ is proper, the latter case does not happen.

{
Suppose that $X$ and $X'$ are of the type II. Note that in this case, $f^\sharp_r=f^\flat_r$. Suppose that $i_r=i_{r-1,\frac12} +1$. Then by the similar argument above we can show that $f_r^\flat\colon {\rm Dom}(f_r^\flat)\subset D_r(X)\to F_{i_r}(X')$ respects subgrassmannian distribution{}.
Now suppose $i_{r-1,\frac12}=i_{r-1}+1$.
Then by the similar argument, we can show that $f^\flat_{r-1,\frac12}$ respects subgrassmannian distributions.
Let $\tau\in D_0(\Omega)$ so that $Z_\tau\subset D_{r-1}(\Omega)$. Then it is enough to show that $f^\sharp_{r-1}$ is a standard map on $Z_\tau$
for all $\tau\in D_0(\Omega)$.
Let
$$Z_\tau^{r-1}=Gr(a, V).$$
Then 
$$Z_\tau^{r-1,\frac12}=Gr(a-1, V)$$
and by assumption, $f^\sharp_{r-1,\frac12}:Gr(a-1, V)\to Gr(b, V')$ is a standard embedding for some $Gr(b, V')=Z_{\tau'}'$.
For a fixed $\xi\in Gr(a-1, V)$, consider a rank one subspace
$$L_\xi:=\{[\xi\oplus W]\in Gr(a, V): W\in Gr(1, V), W \not\subset \xi\}.$$
Then for each $[\xi\oplus W]\in L_\xi$, there exists $\sigma_W\in  Z_\tau^{r-1,\frac12}$ such that
$$X_{[\xi\oplus W]}=X_{\xi}\cap X_{\sigma_W},$$
where $X_{[\xi\oplus W]}$ is the rank $r-1$ characteristic subspace corresponding to $[\xi\oplus W]$ and 
$X_{\xi}$ and $X_{\sigma_W}$ are totally invariantly geodesic subspaces corresponding to $\xi$ and $\sigma_W$, respectively.
By the definition of $f^\sharp_{r-1}$, for $\eta_W=[\xi\oplus W]\in L_\xi,$ we have 
\begin{equation}\nonumber
X'_{f_{r-1}^\sharp(\eta_W)}  =\bigcap_{\eta'}X'_{\eta'}
\subset  X'_{f_{r-1, \frac12}^\sharp (\xi)}\cap X'_{f_{r-1,\frac12}^\sharp(\sigma_W)}
\end{equation}
where the first intersection is taken over all characteristic subspaces $X'_{\eta'}$ containing $f(\Omega \cap X_{\eta_W})$.
Since $i_{r-1, \frac12}=i_{r-1}+1$, this inclusion implies 
$$X'_{f_{r-1}^\sharp(\eta_W)}  =X'_{f_{r-1, \frac12}^\sharp (\sigma_0)}\cap X'_{f_{r-1,\frac12}^\sharp(\sigma_W)}=X'_{f^\sharp_{r-1,\frac12}(\xi)+ f^\sharp_{r-1,\frac12}(\sigma_W)}$$
and $f^\sharp_{r-1, \frac12}(\xi)$ is a codimension one subspace of $f^\sharp_{r-1,\frac12}(\xi)+ f^\sharp_{r-1,\frac12}(\sigma_W)$. 
Here $f^\sharp_{r-1,\frac12}(\xi)+ f^\sharp_{r-1,\frac12}(\sigma_W)$ is the smallest subspace in $V'$ that contains $f^\sharp_{r-1,\frac12}(\xi)\cup f^\sharp_{r-1,\frac12}(\sigma_W)$.
Moreover since $f^\sharp_{r-1,\frac12}$ is a standard embedding,
we obtain that on $L_\xi$, $f^\sharp_{r-1,\frac12}(\xi) + f^\sharp_{r-1,\frac12}(\sigma_W)$ is either constant or of the form
$$f^\sharp_{r-1}(\xi)\oplus \phi(W)$$
for some projective linear embedding $\phi:Gr(1, V)\to Gr(1, V')$. In the first case, since $\tau$ and $\xi$ are arbitrary, $f^\sharp_{r-1}$ is constant on $D_{r-1}(\Omega)$, which is impossible. Therefore the second case holds and 
$$\left\{f^\sharp_{r-1,\frac12}(\xi)+ f^\sharp_{r-1,\frac12}(\sigma_W):W\in Gr(1, V), W\not\subset\xi\right\}\neq 
\left\{f^\sharp_{r-1,\frac12}(\widetilde \xi)+ f^\sharp_{r-1,\frac12}(\sigma_W):W\in Gr(1, V), W\not\subset\widetilde\xi\right\}$$
if $\xi\neq \widetilde\xi.$
Since $\xi$ is arbitrary, $f^\sharp_{r-1}$ restricted to $Z_\tau^{r-1}$ is a standard embedding by \cite{M08a}.}
\epf

\rm
{
We may assume that $f(\Omega)$ is not contained in any proper totally invariantly geodesic subspace of $\Omega'$. Let $V\in X (=\mathcal D_0(X))$. Let $Z_V=Gr(a_r, V)$. Since $f^\flat_r$ respects {subgrassmannians}, there exists subspaces $W_0, W_1$
such that on a big {Schubert} cell, $f^\flat_r$ is given by
\beq\label{map-1}
(x)\in Z_V\to W_0\oplus (x)\in W_0\oplus Gr(a_r, W_1)
\eeq 
or
\beq\label{map-2}
(x)\in Z_V \to W_0\oplus(x^t)\in W_0\oplus Gr(b_r, W_1),
\eeq
where $b_r=r$ if $X$ is one of the type I and III, $b_r=n-2[n/2]+2r$ if $X=OGr_n$.
Suppose \eqref{map-2} holds. Since $D_r(X)$ is $Z_{\tau}$-connected with $\tau\in \mathcal{D}_0(X)$, as in the proof of Lemma~\ref{H respects}, there exist subspaces $W_0', W_1', W_2'\subset V_{X'}$ independently of $\tau\in \mathcal{D}_0(X)$ with $\dim W_1'=b_r>0$ such that for $\tau\in \mathcal D_0(X)$,
$$ 
	f^\flat_r(Z_{\tau})\subset W_0'\oplus Gr(c_r, W_1'\oplus W_2'),\quad c_r=\dim W_2' .
$$
On the other hand, since $f^\flat_r$ maps $Z_V$ to $Z'_{f(V)}=Gr(a_{i_r}, f(V))$ for $V\in \Omega$, in view of \eqref{map-2}, we obtain
$$ W_1'\subset f(V),\quad \forall V\in \Omega.$$
Therefore $f(\Omega)$ is contained in a totally invariantly geodesic subspace of $\Omega'$, which is a contradiction. Hence $f^\flat_r$ on $Z_\tau$ is of the form \eqref{map-1} and there {exists a subspace} $W_2$ such that
$$ f^\flat_r(D_r(X))\subset W_0\oplus Gr(a_r, W_2),$$
where $W_0$ is given in \eqref{map-1}. Since
$$ f^\flat_r(D_r(\Omega))\subset F_{i_r}(\Omega'),$$
we obtain 
$$I_{p',q'}\big|_{W_0}>0.$$
Write
$$f^\flat_r=W_0\oplus H.$$
Choose $I_{p',q'}$-isotropic subspace $\widetilde W_0$ such that $\dim \widetilde W_0=\dim W_0$ and $I_{p',q'}(\widetilde W_0,W_2)=0.$
Then, we obtain the following lemma.

\bl\label{H preserves}
$H$ satisfies
\beq\label{Sigma-pres}
	\widetilde W_0\oplus H(\Sigma_r(X))\subset \Sigma_{i_r}(X'),
\eeq
\beq\label{Sigma_perpendicular}
\widetilde W_0\oplus  H(D_{r}(X))\not\subset \Sigma_{r'}(X').
\eeq
\el

\bpf
By Lemma~\ref{Z-tau-fix}, Lemma~\ref{commute}, there exists $m$ 
such that 
\beq\label{lemma Z-tau-fix}
	f^\flat_r(\Sigma_r(X))\subset F_{i_r}(S_m(X')).
\eeq
Since $I_{p',q'}\big|_{W_0}>0$, to show \eqref{Sigma-pres}, it is enough to show that $m\leq i_r-\dim W_0.$
Suppose that \eqref{Sigma-pres} does not hold. Then $m>i_r-\dim W_0$.
Let $V_0\in \Sigma_r(X)$ be a general point. Choose $\sigma_0\in \mathcal D_r(S_r(X))$ such that $V_0=pr(\sigma_0)$. By \eqref{lemma Z-tau-fix}, there exists a unique boundary component 
$\Omega_{\mu_0'}'$ of $\Omega'$ with rank $m$ such that $\Omega'_{f^\sharp_r(\sigma_0)}\subset \Omega'_{\mu_0'}$. 
Since $m>i_r-\dim W_0$, 
$pr'({\mu_0'})$ is a proper subspace of $H(V_0)$. 
Since $\Omega_{f^\sharp_r(\sigma_0)}'$ is contained in a unique boundary component, $pr'(\mu_0)$ is the unique maximal $I_{p',q'}$-isotropic subspace of $H(V_0)$.
In what follows, we will show that 
$$f^\flat_r(D_r(\Omega))\subset Q_{\mu_0'}',$$
which is a contradiction to the assumption that $f$ is proper.
 
Choose a general $\tau\in \mathcal  S_0(\Omega)$ such that $V_0 (=pr(\sigma_0))\in  Z_\tau\subset \Sigma_r(X)$.
Write \beq\nonumber
Z_{\tau}=Gr(n_r, V_{\tau})
\eeq
for suitable $V_{\tau}\subset V_X$. Since $f^\flat_r$ respects {subgrassmannian} {distributions} and $f^\flat_r$ restricted to $Z_\tau$ satisfies \eqref{map-1},
we obtain 
$$f^\flat_r(Z_\tau)= W_0\oplus Gr(n_r, L_\tau)$$
for some $L_\tau$.
Then
there exists a unique subspace $R\subsetneq V_0$ such that
$$f^\flat_r(\{V\in Z_\tau: V\supset R \})=\{V'\in f^\flat_r(Z_\tau): V'\supset pr'(\mu_0')\}.$$ 
Since $R$ is a subspace of $V_0$, we obtain
$$I_{p,q}(R, R)=0.$$
Hence there exists a unique boundary component $\Omega_{\rho}=X_{\rho}\cap \partial\Omega$ of rank $s>r$ such that
$pr( \rho)=R$ and $\partial \Omega_{\rho}\supset \Omega_{\sigma_0}$. 

Consider
$$Q_{\rho}=\{pr(\sigma)\in D_r(X): X_\sigma\subset X_{\rho}\}.$$
By definition, we obtain
$$H(V)\supset pr'(\mu_0'),\quad V\in Q_{\rho}\cap Z_\tau.$$
Since $Z_\tau$ is of rank $\geq 2$ and 
$$R\subsetneq V_0\subsetneq V_\tau,$$ 
$ Q_{\rho}\cap  Z_\tau$ contains a rank one subspace of dimension at least $2$.
Since $f^\flat_r$ on each $Z_\tau$ satisfies \eqref{map-1}, 
we obtain
$$ f^\flat_r(\{V\in D_r(X): V\supset R\})\subset \{V'\in F_{i_r}(X): V'\supset pr'(\mu_0')\},$$ 
i.e.,
\beq\label{intersect}
f^\flat_r( Q_{\rho})\subset Q'_{\mu_0'}
\eeq
Choose a general $\sigma\in \mathcal Q_\rho$ such that $\Omega_\sigma\subset \Omega_\rho$. Then $\Omega'_{f^\sharp_r(\sigma)}$ is contained in a rank $m'\geq m$ boundary component of $\Omega'$. By \eqref{intersect}, we obtain that $m'=m$.
Since $\Omega_\rho$ and $\Omega'_{\mu_0'}$ are rank $s$ and rank $m$ boundary components of $\Omega$ and $\Omega'$, respectively, by Lemma~\ref{Z-tau-fix}, we obtain
$$ f^\sharp_r(\mathcal D_r(S_s(X)\cap{\rm Dom}(f^\flat_r))\subset \mathcal F_{i_r}(S_m(X')).$$

Let
$$ A:=(f^\flat_r)^{-1}(Q'_{\mu_0'})\cap D_r(S_s(X)).$$
Then $A$ is a nonempty set containing $\{pr(\nu)\in  Q_\rho:\Omega_\nu\subset \Omega_\rho\}$.
Let $\nu\in A$ be a general point. Then by definition
$$\Omega_{f^\sharp_r(\nu)}'\subset \Omega_{\mu_0'}'.$$
Choose a rank $s$ boundary component $\Omega_{\widetilde\rho}$ of $\Omega$ such that $\Omega_\nu\subset{\Omega_{\widetilde\rho}}$ and choose a general $\widetilde \sigma$
such that $\Omega_{\widetilde\sigma}$ is a rank $r$ boundary component of $\Omega_{\widetilde\rho}$. Then by Lemma~\ref{b-comp}, 
we obtain
$$\Omega_{f^\sharp_r(\widetilde\sigma)}\subset \overline{\Omega'_{\mu_0'}}.$$
On the other hand,
by \eqref{lemma Z-tau-fix}, 
$\Omega'_{f^\sharp_r(\widetilde\sigma)}$ should be contained in a rank $m$ boundary component of $\Omega'$.
Since $\Omega'_{\mu_0'}$ is a rank $m$ boundary component of $\Omega'$, we obtain
$$\Omega'_{f^\sharp_r(\widetilde\sigma)}\subset \Omega'_{\mu_0'}.$$
Since $\Omega_{\widetilde\sigma}$ is a boundary component of $\Omega_{\widetilde\rho}$, by the same argument as above, we obtain
$$ f^\flat_r(Q_{\widetilde\rho})\subset Q'_{\mu_0'}.$$ 
Since any two points $\sigma_1, \sigma_2 \in \Sigma_r$ are connected by $Q_{\widetilde\rho}$-chain for $\widetilde\rho\in \mathcal D_s(S_s(X)).$
we obtain
$$f^\flat_r(\Sigma_r\cap{\rm Dom}(f^\flat_r))\subset Q'_{\mu_0'}.$$
Since $\Sigma_r(X)$ is a Levi nondegenerate generic CR manifold, we obtain
$$f^\flat_r(D_r(X)\cap {\rm Dom}(H))\subset Q_{\mu_0'}'.$$

Next suppose \eqref{Sigma_perpendicular} does not hold. Then there exists $m$ such that $f^\flat_r(D_r(X))\subset D_{i_r}(S_m(X')).$
Hence we obtain $f^\sharp_r(\mathcal D_r(\Omega))\subset \mathcal D_{i_r}(S_m(X'))$, which contradicts the assumption that $f$ is proper.
\epf
}

\noindent{\bf Proof of Lemma~\ref{rigid moduli} when $ r=q-1${:}} Assume that $X'$ is of type I or III. If $ i_1=1$, then $r=1$ satisfies the condition \eqref{condition in lemma}. {By the proof of Lemma~\ref{rigid moduli} in the case of $r = 1$, {then $f$ is a standard embedding.} We may therefore assume without loss of generality that $i_1>1$.}  If $i_{q-1}<q'-1$, then since $1<i_1$ and $i_{q-1}<q'-1\leq 2q-3$, $i_{q-2} = i_{q-1}-1 < 2q-4$, hence} there must necessarily exist another {$r$ satisfying $2\le r <q-1$} such that $i_r=i_{r-1}+1$, {which has already been taken care of in the above.}

{Without loss of generality we may therefore assume that $i_{q-1}=q'-1$, in which case $i_{q-1}<2(q-1)$ and hence $X$ cannot be of type II. Therefore $X$ is of type I or III and $i_{q-1}=i_{q-2}+1$, which implies that $f^\flat_{q-1}$ maps $Z_{\tau},~\tau\in \mathcal{D}_{q-2}(S_{q-2}(X))$ to $Z'_{\tau'},~\tau'\in \mathcal{D}_{q'-2}(X')$. By Lemma~\ref{Z-tau-rank one}, $Z_{\tau}$, $\tau\in \mathcal{D}_{q-2}(S_{q-2}(X))$ and  $Z'_{\tau'},~\tau'\in \mathcal{D}_{q'-2}(X')$ are projective lines in $\Sigma_{q-1}(X)$ and $D_{q'-1}(X')$, respectively. Hence $f^\flat_{q-1}$ sends projective lines in $\Sigma_{q-1}(X)$ to projective lines in 
$D_{{q'-1}}(X')$.}
{
Note that $ f^\flat_{q-1}$ maps $\Sigma_{q-1}$ to $\Sigma'_{q'-1}$. Since $\Sigma_{q-1}$ and $\Sigma'_{q'-1}$ are Levi nondegenerate CR hyperquadrics and $f^\flat_{q-1}(D_{q-1}(X))$ is not contained in $\Sigma_{q'-1}'$, $f^\flat_{q-1}$ restricted to $\Sigma_{q-1}$ is a transversal CR map at a general point. In particular, $f^\flat_{q-1}$ is of maximal rank at a general point.}  
{Therefore Lemma~\ref{hyperquadric} completes the proof.

Assume now that $X'$ is of type II. Since the pair $(X, X')$ satisfies the hypothesis in Theorem~\ref{main} or Theorem~\ref{nonexistence}, $X$ {{must necessarily be} of type II. 
{
Therefore $Z_\tau$ and $Z'_{\tau'}$ are of rank greater or equal to $2$.
Therefore by the same argument as in the case of $r<q-1$, we can show that $f_{q-2}^\flat$ is a trivial embedding if $i_{q-2,\frac12}=i_{q-2}+1$ and $f^\flat_{q-1}$ is a trivial embedding if $i_{q-1}=i_{q-2,\frac12}+1$.}
\medskip

By Lemma~\ref{rigid moduli}, we can choose $r>1$ such that $f^\flat_r$ is a trivial holomorphic embedding. Moreover, if $r<q-1$, then there exists a natural embedding of $\imath:V_X\to V_{X'}$ given by $f^\flat_r$ such that 
$$
	f^\flat_r(D_r(X))\subset V_0\oplus Gr(a_r, \imath(V_X))
$$ 
and $f^\flat_r=V_0\oplus S_r$, where $a_r=q-r$ if $X$ is of type I or III and $a_r=2(q-r)$ if $X$ is of type II and $S_r\colon D_r(X)\to Gr(a_r, \imath(V_X))$ is a trivial embedding. We will identify $V_X$ with $\imath(V_X)$ and regard $V_X$ as a subspace of $V_{X'}$.


\bl\label{standard embedding}
$i_{q-1}=i_{q-2}+1$ and there exists $V_0\subset V_{X'}$ such that 
$$f^\flat_{q-1}=V_0\oplus S_{q-1}:D_{q-1}(D)\to V_0\oplus Gr(1, V_0^\perp)$$
if $X$ is of type I or III
and
$$f^\flat_{q-1}=V_0\oplus S_{q-1}:D_{q-1}(D)\to V_0\oplus Gr(2, V_0^\perp)$$
if $X$ is of type II.
\el

\bpf
First we assume that $X$ is of type~I or III. Then by assumption on the pair $(X, X')$ in Theorem~\ref{main} or Theorem~\ref{nonexistence}, $X'$ is of type I or III, too. If $i_{q-1}=i_{q-2}+1=q'-1,$ then it is clear. Suppose $i_{q-1}>i_{q-2}+1$ or $i_{q-1}<q'-1$. Then we can choose $r<q-1$ such that $i_{r}=i_{r-1}+1$.
Hence it is enough to show that if $r<q-1$ and $i_r=i_{r-1}+1$, then
$i_{r+1}=i_r+1$ and $f^\flat_{r+1}=V_0\oplus S_{r+1}.$
Let $\mu\in \mathcal{D}_{r+1}(\Omega)$ be a general point. 
Let $V_\mu$ be a subspace of $V_X$ of dimension $q-r-1$ such that
$$
	Q_{\mu}=\{V\in D_r(X)\colon V_\mu\subset V\}.
$$
Since $f^\flat_r$ preserves $Q_\mu$, we obtain
$$
	f^\flat_{r}(Q_\mu)\subset Q'_{f^\sharp_{r+1}(\mu)}.
$$
Let $L_\mu\subset L_{X}$ be a minimal subspace such that
$$ Q'_{f^\sharp_{r+1}(\mu)}\cap f^\flat_r(D_r(X))=V_0\oplus \{V'\in Gr(a_r, V_X): L_\mu\subset V'\}.$$
Since $S_r$ is a standard embedding, we obtain $\dim L_\mu=\dim V_\mu$. 
We will show that 
$$ pr'(f^\sharp_{r+1}(\mu))=V_0\oplus L_\mu,$$
which will imply
$$i_{r+1}=q'-\dim V_0+r+1=i_r+1$$
and
$$f^\flat_r=V_0\oplus S_{r+1}.$$

By assumption on $f^\flat_r$ and Lemma~\ref{Q-mu}, we obtain
$$
	f^\flat_r(Q_{\mu})= V_0\oplus
	\{ V\in Gr(a_r, V_X)\colon L_\mu\subset V\}
	\subset Q'_{f^\sharp_{r+1}(\mu)}.
$$
Since by definition
$$Q'_{f^\sharp_{r+1}(\mu)}=\{V'\subset V_{X'}:pr'(f^\sharp_{r+1}(\mu))\subset V'\},$$
we obtain
$$
	pr'(f^\sharp_{r+1}(\mu))\subset V_0\oplus L_\mu
$$
as a subspace.
On the other hand, for any $\sigma\in \mathcal{D}_r(\Omega)$ with $pr(\sigma)\in Q_{\mu}$, we obtain
$$
	pr'\circ f^\sharp_r(\sigma)
	=f^\flat_r\circ pr(\sigma)
	\in f^\flat_r(Q_{\mu})
	=	V_0\oplus \{ V\in Gr(a_r, V_X)\colon L_\mu\subset V\}, 
$$
which implies
$$
	f^\sharp_r(\sigma)\in \{(V_0\oplus V_1, V_2)\in \mathcal{F}_{(a'_r, b'_r)}(\Omega')\colon L_\mu\subset V_1\}.
$$
Since 
$$
	f(\Omega_\mu)\subset \bigcup_{\sigma\in \mathcal{Q}_{\mu}} f(\Omega_\sigma),
$$
we obtain
$$
	f(\Omega_\mu)\subset X'_{(V_0\oplus L_\mu, W)}
$$
for some $W\subset V_{X'}.$ Since $f^\sharp_{r+1}(\mu)$ is the smallest Hermitian symmetric subspace that contains $f(\Omega_\mu)$, we obtain
$$
	V_0\oplus L_\mu \subset pr'(f^\sharp_{r+1}(\mu))
$$
completing the proof.
The same argument can be applied to the type~II case.
\epf


\section{Proof of Theorems}\label{Proof of Theorems}

\subsection{Proof of Theorem~\ref{main}}
By Lemma~\ref{standard embedding} we obtain $f^\flat_{q-1}=V_0\oplus S_{q-1}\colon D_{q-1}(X)\to F_{i_{q-1}}(X')$ is a trivial embedding. Then we obtain 
$$f=V_0\oplus \hat f\colon \Omega\to V_0\oplus \Omega''$$
for some subdomain $\Omega''$ of $\Omega'$ with rank $\leq q'$. By replacing $f:\Omega\to \Omega'$ with $\hat f:\Omega\to \Omega''$, we may assume that  
$f^\flat_{q-1}\colon D_{q-1}(X)\to Gr(1, V_{X'})\subset F_{i_{q-1}}(X')$ is a trivial embedding
if $X$ is of type~I or III and  
$f^\flat_{q-1}\colon D_{q-1}(X)\to Gr(2, V_{X'})\subset F_{i_{q-1}}(X')$ is a trivial embedding
if $X$ is of type~II. Let $j:V_X\to V_{X'}$ be a linear embedding induced by $f^\flat_{q-1}$. Then $j$ defines a standard holomorphic embedding $g\colon X\to X'$ such that
$g^\flat_{q-1}=f^\flat_{q-1}$.

\bl\label{decomp-f}
Let $g\colon X \to X'$ be the standard holomorphic embedding induced by $j:V_X\to V_{X'}$ and $Y\subset X'$ be the maximal Hermitian symmetric subspace such that  
$g(X)\times Y$ is a totally geodesic subspace of $X'$. Then there exists a holomorphic mapping $h\colon \Omega\to Y$ such that
$$
 	f=g\times h\colon \Omega\to g(\Omega)\times Y.
$$
\el

\bpf
Assume that $f(0)=g(0)$. Assume further that $\Omega$ and $\Omega'$ satisfy the condition 2), i.e., $\Omega$ is of type~III and $\Omega'$ is of type~I. 
Since {$f^\flat_{q-1}=g^\flat_{q-1}$ is induced by a standard holomorphic embedding, by Lemma~\ref{Z-tau-fix}}, we obtain
$$f^\flat_{q-1}(\Sigma_{q-1}(X))\subset \Sigma_{q'-1}(X').$$
Moreover, since $pr'\colon \mathcal{D}_{q'-1}(S_{q'-1}(X'))\to \Sigma_{q'-1}(X')$ is one to one, for each $\sigma\in \mathcal D_{q-1}(S_{q-1}(X))$, there exists a unique maximal boundary component 
$M_{\sigma}$ of $\Omega'$ such that
$$
	g(\Omega_{\sigma})\subset \Omega'_{g^\sharp_{q-1}(\sigma)}\subset M_{\sigma}.
$$
Note that since $f^\flat_{q-1}=g^\flat_{q-1}$ and $M_\sigma$ is a maximal boundary component, we obtain
\beq\label{contain}
 	\Omega'_{f^\sharp_{q-1}(\sigma)}\subset M_{\sigma}.
\eeq

For a maximal characteristic subdomain $\Omega_\sigma\subset \Omega$, choose a minimal disc $\Delta_\sigma\subset \Omega$ passing through $0$ such that $\Delta_\sigma\times \Omega_\sigma$ is a totally geodesic subspace of $\Omega$ and hence $\partial\Delta_\sigma\times \Omega_\sigma\subset S_{q-1}(\Omega)$. Let 
$$\Omega_{\sigma(t)}:=\{t\}\times \Omega_\sigma,~t\in \overline\Delta_\sigma.$$
Since $g:X\to X'$ is a standard embedding and 
$$
	g\left(\Omega_{\sigma(t)}\right) \subset M_{\sigma(t)},\quad \forall t\in \partial\Delta_\sigma,
$$
there exists a minimal disc $\Delta'_\sigma$ of $\Omega'$ such that
\beq\label{union of image}
	g\left(\Omega_{\sigma(t)}\right) \subset \Delta'_\sigma\times g(\Omega_\sigma)\subset \Delta'_\sigma\times M_{\sigma},\quad \forall t\in \Delta_\sigma,
\eeq
Since $f^\flat_{q-1}=g^\flat_{q-1}$, by \eqref{contain} and \eqref{union of image}, we obtain
\beq\label{f-contained}
	f\left(\Omega_{\sigma(t)}\right)\subset \Omega'_{f^\sharp_{q-1}([\sigma(t)])}
	\subset \Delta'_\sigma\times M_{\sigma},\quad \forall t\in \Delta_\sigma.
\eeq
Define 
$$
	Z:=\bigcap_{\sigma}(\Delta'_{\sigma})^\perp,
$$ 
where the intersection is taken over all minimal disc $\Delta_\sigma$ passing through $0$, $\Delta'_{\sigma}$ is the minimal disc given in \eqref{union of image} and $(\Delta'_{\sigma})^\perp$ is the maximal characteristic subspace passing through $f(0)$ such that $T_{f(0)}(\Delta'_\sigma)^\perp= \mathcal N_{[v]}$, $v\in T_0\Delta'_\sigma$.
Then by \eqref{union of image}, $Z$ is a maximal Hermitian symmetric space such that $g(X)\times Z$ is totally geodesic in $X'$. We let $Y=Z$.

Choose the minimal Hermitian symmetric subspace $X_{(V_1, V_2)}'\subset X'$ of rank $q$ such that $g(X)\subset X_{(V_1, V_2)}'$.
Considering $0$ as a subspace,
decompose $0$ into $V_1\oplus W_1$.
Choose a local coordinate system of $X'$ at $f(0)$ such that $f=(F_1, F_2)$ satisfies 
$$
	F_1\colon \Omega\to X_{(V_1, V_{X'})}',
	\quad
	F_2\colon \Omega\to X_{(W_1, V_{X'})}'.
$$
By \eqref{f-contained} and induction on dimension, we can show that for any properly embedded maximal polydisk $\Delta^q\subset \Omega$, 
there exist a $q$-dimensional polydisk $\widetilde \Delta^q\subset X'_{(V_1, V_{X'})}$ and a subdomain $\Omega''\subset \Omega'\cap X'_{(W_1, V_{X'})}$ of rank $q'-q$ orthogonal to $\widetilde \Delta^q$  such that $\widetilde \Delta^q\times \Omega''$ is totally geodesic and 
\beq\nonumber
	f(\Delta^q)\subset \widetilde\Delta^q\times \Omega'',
\eeq
which implies that on $\Delta^q\subset \Omega$,
$$ 
	\langle F_1, F_2\rangle_{p',q'}\equiv 0.
$$ 
By differentiating it, we obtain 
$$
	\langle\partial F_1, F_2\rangle_{p',q'}\equiv 0
$$
on $\Delta^q$.
Since $\Delta^q$ is arbitrary, we obtain 
\beq\label{F2-orthogonal}
	\langle \partial F_1,F_2\rangle_{p',q'}\equiv 0.
\eeq

On the other hand, since $f$ is proper, by \eqref{f-contained}, we obtain
$$
	\lim_{x\in \Delta^q\to p\in \partial(\Delta^q)}f(x)
	\subset\partial(\widetilde\Delta^q)\times\Omega''\subset \partial\Omega'.
$$
In particular, $F_1\colon \Omega\to X_{(V_1, V_{X'})}'\cap\Omega'$ is proper. 
Then by \cite{Ts93}, $F_1$ is a totally geodesic isometric embedding. 
Since 
$ f^\flat_{q-1}=g^\flat_{q-1}$, we obtain $\partial F_1=\partial g$. Hence by complexifying \eqref{F2-orthogonal}, we obtain that
$ F_2(\Omega)$ is contained in a subdomain of $\Omega'$ orthogonal to $g(\Omega)$, i.e.,
$f(\Omega)\subset g(\Omega)\times Y$
and  
$$F_1\equiv g.$$
The same argument can be applied to the case when $\Omega$ and $\Omega'$ satisfy the condition (1).

We have proven that writing $F=F_1\times F_2\colon \Omega\to \Omega_1'\times \Omega_2'$, $F_1: \Omega \to \Omega'$ is a standard embedding, and it follows that $F: \Omega \to \Omega_1' \times \Omega_2'$ is a holomorphic totally geodesic isometric embedding with respect to Kobayashi metrics.  By Mok (\cite[Theorem 3.1]{M22}), the holomorphic embedding $\imath: \Omega_1' \times \Omega_2' \to \Omega'$ is a holomorphic totally geodesic isometric embedding with respect to Kobayashi metrics.  It follows that $f: \Omega \to \Omega'$ is also a holomorphic totally geodesic isometric embedding with respect to Kobayashi metrics, as desired.
\epf

\vskip 0.3cm
\noindent
{\it Remark} \ Given a complex manifold $X$ hyperbolic with respect to the Kobayashi metric, a point $x \in X$, and a nonzero real tangent vector $v \in T_x^{\mathbb R}(X)$, there can be more than one germ of real geodesic curve $\gamma: (-\varepsilon,\varepsilon) \to X$ such that $\gamma(0) = x$ and $\gamma'(0) = v$.  We say that a complex submanifold $S \subset X$ is totally geodesic to mean that given any two distinct points $x_1, x_2\in S$, there always exist a real geodesic curve $\gamma$ on $X$ joining $x_1$ to $x_2$ such that the image of $\gamma$ lies on $S$ (while there may be other real geodesic curves on $X$ joining $x_1$ and $x_2$ that do not entirely lie on $S$).

\subsection{Proof of Theorem~\ref{nonexistence}}
First assume that $\Omega$ and $\Omega'$ satisfy the condition 1). Suppose that there exists a proper holomorphic map $f\colon D^{I}_{p,q}\to D^{III}_{q'}$ with $2\leq q\leq q'<2q-1$. By composing a standard embedding $j:D^{III}_{q'}\to D^I_{q',q'}$, we may assume that $f\colon D^I_{p,q}\to D^I_{q',q'}$ is a proper holomorphic map. Then by Theorem~\ref{main}, $f$ is of he form $g\times h$, where $g:D^I_{p,q}\to D^I_{q',q'}$ is a standard holomorphic map and $h\colon \Omega\to \Omega''$ is a holomorphic map for some subdomain $\Omega''\subset D^I_{q',q'}$ orthogonal to $g(D^I_{p,q})$. Since $f(D^I_{p,q})\subset D^{III}_{q'}$, this implies that $D^{III}_{q'}$ contains a rank $q$ characteristic subspace that contains $D^I_{p,q}$, which is impossible.

Next assume that $\Omega$ and $\Omega'$ satisfy the condition 2).  By the same reason as above, we may assume that $\Omega'$ is of type~I. Suppose there exists a proper holomorphic map $f\colon D^{II}_{n}\to D^{I}_{p',q'}$ with $2\leq q'<2[n/2]-1$. Since $\Omega'$ is of type~I, we obtain 
$i_1=1$, $i_{q-1}=q'-1$ and $i_r=i_{r-1}+2$ for all $r=2,\ldots,q-1.$ 
Since $i_1=1$, $f$ preserves VMRT and therefore is a standard embedding.
Then by the same argument in the proof of Lemma~\ref{lines}, we obtain that for all $r=1,\ldots,[n/2]-1$ and all $\tau\in \mathcal D_0(X)$, $f^\flat_r$ restricted to $Z_{\tau}$ is a standard embedding.  
In particular, $f^\flat_2\colon Z_{\tau}\cap Z_{\rho}\to Z_{f^\sharp_2({\tau)}}\cap Z_{f^\sharp_2(\rho)}$ is a standard embedding from a Grassmannian of rank 3 to a Grassmannian of rank 2 if $\dim Z_{\tau}\cap Z_{\rho}>0$, which is impossible.

\section{Appendix}\label{Appendix}
For $X = Gr(q, p),$ see \cite{K21}.
Let $p,q$ be positive integers such that $q\leq p$.
Define a Hermitian inner product $\langle~,~\rangle_{p,q}$ in $\mathbb{C}^{p+q}$ by
\begin{equation}\nonumber
\langle u,v\rangle_{p,q}
:=u_1\bar v_1+\cdots+u_q\bar v_q- u_{q+1}\bar
v_{q+1}-\cdots-u_{p+q}\bar v_{p+q},
\end{equation}
for $u=(u_1,\ldots,u_{p+q})$ and $v=(v_1,\ldots,v_{p+q})$.
Recall
\begin{equation*}
\begin{aligned}
\Sigma_r(Gr(q,p))&=\{Z\in Gr(q-r,\mathbb C^{p+q}):\langle ~, \rangle_{p,q} |_Z=0 \}~ \text{ for } r\leq q, \\
\Sigma_r(OGr_n) &= \{ Z\in Gr(2[n/2]-r, \mathbb C^{2n}) :
\langle~, \rangle_{n,n}|_Z=0, ~ S_n|_Z=0 \}~\text{ for } r\leq n, \\
\Sigma_r(LGr_n) &= \{ Z\in Gr(n-r,\mathbb C^{2n}) :
\langle~, \rangle_{n,n}|_Z=0, ~ J_n|_Z=0 \}~ \text{ for } r\leq n.
\end{aligned}
\end{equation*}

For $X = Gr(q,p), \, OGr_n$ or $LGr_n$, let $\ell$ denote $q-r$, $2[n/2]-r$, or $n-r$, $G$ denote $SU(p,q)$, $SO(n,n)$ or $Sp(n)$, 
and $\frak g$ denote $su(p,q)$, $so(n,n)$ or $sp(n)$ respectively.
If $X = OGr_n$ or $LGr_n$, then $p=q=n$.
For $X = Gr(q,p), \, OGr_n$ or $LGr_n$, 
a \emph{Grassmannian frame adapted to} $\Sigma_r(X)$, or simply
$\Sigma_r(X)$-\emph{frame} is a frame $\{ Z_1,\ldots,Z_{p+q}\}$ of
$\mathbb{C}^{p+q}$ with $\det(Z_1,\ldots,Z_{p+q})=1$ such that
\begin{equation}\label{structure}
\langle Z_\alpha,Z_{p+q-\ell+\beta}\rangle_{p,q}=\langle
Z_{p+q-\ell+\a},Z_\b\rangle_{p,q}=~\delta_{\alpha\beta},~~
 \langle Z_{\ell+j},Z_{\ell+k}\rangle_{p,q} =\3\delta_{jk},
\end{equation}
for $\a,\,\b=1,\ldots,\ell,~j,\,k=1,\ldots, p+q-2\ell$ and
\begin{equation*}\label{structure2}
\langle Z_\L,Z_\G\rangle_{p,q}=0~\text{   otherwise,   }
\end{equation*}
where $\3\delta_{jk}=\delta_{jk}$ if $\min(j,k)\leq q-\ell$,
$\3\delta_{jk}=-\delta_{jk}$ otherwise, and the capital Greek indices $\L,\G,\Omega$ etc.\ run
from $1$ to $p+q$, i.e., the scalar product $\langle\cdot,\cdot\rangle_{p,q}$
in basis $\{Z_1,\ldots,Z_{p+q}\}$ is given by the matrix
$$
\begin{pmatrix}
0&0&0& I_\ell\\
0&I_{q-\ell}&0&0\\
0&0&-I_{p-\ell}&0\\
I_\ell&0&0&0\\
\end{pmatrix}.
$$
We use the notation
\begin{eqnarray*}
Z&:=&(Z_1,\ldots,Z_\ell),\\
X=(X_1,\ldots,X_{p+q-2\ell})&:=&(Z_{\ell+1},\ldots,Z_{p+q-\ell}),\\
Y=(Y_1,\ldots,Y_\ell)&:=&(Z_{p+q-\ell+1},\ldots,Z_{p+q}).
\end{eqnarray*}

Let $\mathcal{B}_r(X)$ be the set of all
$\Sigma_r(X)$-frames. Then $\mathcal{ B}_r(X)$ can be identified with
$G$ by the left action. By abuse of notation, we also denote by $Z$
the $q$-dimensional subspace
of $\mathbb{C}^{p+q}$ spanned by $Z_1,\ldots,Z_q$.
Then we can regard $\mathcal{B}_r(X)$ as a bundle over $\Sigma_r(X)$
with respect to a natural projection $(Z, X, Y)\to Z$. The Maurer-Cartan form
$\pi=(\pi_\L^{~\G})$ on $\mathcal{B}_r(X)$ is a $\frak g$-valued one form given by the equation
\begin{equation}\nonumber
dZ_\L=\pi_\L^{~\G}Z_\G
\end{equation}
satisfying the structure equation
\begin{equation*}\label{struc-eq}
d\pi_\L^{~\G}=\pi_\G^{~\Omega}\wedge\pi_\Omega^{~\G}.
\end{equation*}
We use the block matrix representation
with respect to the basis $(Z,X,Y)$ to write
\begin{equation*}\label{pi}
\begin{pmatrix}
\pi_{\a}^{~\b} & \pi_{\a}^{~\ell+j} & \pi_{\a}^{~p+q-\ell+\b}\\
\pi_{q+k}^{~\b} & \pi_{\ell+k}^{~\ell+j}  & \pi_{\ell+k}^{~p+q-\ell+\b}\\
\pi_{p+q-\ell+\a}^{~\b} & \pi_{p+q-\ell+\a}^{~\ell+j}  & \pi_{p+q-\ell+\a}^{~p+q-\ell+\b}\\
\end{pmatrix}
= :
\begin{pmatrix}
\psi_{\a}^{~\b} & \theta_{\a}^{~j} & \phi_{\a}^{~\b}\\
\sigma_{k}^{~\b} & \omega_{k}^{~j} & \theta_{k}^{~\b}\\
\xi_{\a}^{~\b} & \sigma_{\a}^{~j}   &\3\psi_{\a}^{~\b}\\
\end{pmatrix},
\end{equation*}
which satisfies the symmetry relations
\begin{equation*}\label{symmetries}
\begin{pmatrix}
\psi_{\a}^{~\b} & \theta_{\a}^{~j} & \phi_{\a}^{~\b}\\
\sigma_{k}^{~\b} & \omega_{k}^{~j} & \theta_{k}^{~\b}\\
\xi_{\a}^{~\b} & \sigma_{\a}^{~j} & \3\psi_{\a}^{~\b}\\
\end{pmatrix}
=-
\begin{pmatrix}
\3\psi_{\bar\b}^{~\bar\a} & \3\delta_{j}^i\theta_{\bar i}^{~\bar\a}  & \phi_{\bar\b}^{~\bar\a}\\
\3\delta_{i}^k\sigma_{\bar\b}^{~\bar i} &\3\delta_{i}^k \omega_{\bar j}^{~\bar i} & \3\delta_{i}^k\theta_{\bar\b}^{~\bar i}\\
\xi_{\bar\b}^{~\bar\a} &\3\delta_{j}^i \sigma_{\bar i}^{~\bar\a} & \psi_{\bar\b}^{~\bar\a}\\
\end{pmatrix}
\end{equation*}
that follow directly by differentiating \eqref{structure}.
For a change of frame given by
$$
\begin{pmatrix}
\2Z\\
\2X\\
\2Y
\end{pmatrix}
:=U
\begin{pmatrix}
Z\\
X\\
Y
\end{pmatrix}
,$$
$\pi$ changes via
$$\widetilde \pi=dU\cdot U^{-1}+U\cdot\pi\cdot U^{-1}.$$

{
If $X =LGr_n$, $\{Z_1.\ldots, Z_{2n}\}$ satisfies 
\begin{equation*}\label{SGr}
J_n(Z_\alpha, Z_\b) = 0,\quad \a,\b=1,\ldots,\ell.
\end{equation*}
We may regard $\Sigma_r(X)$ as a submanifold of $\Sigma_r(Gr(n,n))$.
Since $\Sigma_r(LGr_n)$ is a generic CR manifold in $SGr(n-r, \mathbb C^{2n})$, we obtain 
$$\mathbb CT_P \Sigma_r(X)/(T^{1,0}_P\Sigma_r(X)+T^{0,1}_P\Sigma_r(X))=T_P SGr(n-r, \mathbb C^{2n})/D\cong S^2U^*,$$
where $D$ and $U^*$ are defined in Section~\ref{Rigidity of the pair}. Therefore we obtain a reduction of frame by
\beq\label{frame reduction}
\phi_\a^{~\b}-\phi_\b^{~\a}=0
\eeq
and 
$\phi_\a^{~\b}+\phi_\b^{~\a},~\a,\b=1,\ldots,\ell$ span the contact forms.  
That is, the set of all $\Sigma_r(Gr(n,n))$-frames adapted to $\Sigma_r(X)$ is the maximal integral manifold of \eqref{frame reduction}.
If $X=OGr_n$, then
$\{Z_1.\ldots, Z_{2n}\}$ satisfies 
\begin{equation*}\label{ogr}
S_n(Z_\alpha, Z_\beta) = 0,\quad \a,\b=1,\ldots,\ell
\end{equation*}
and  
$$\mathbb CT_P \Sigma_r(X)/(T^{1,0}_P\Sigma_r(X)+T^{0,1}_P\Sigma_r(X))=T_P OGr(2([n/2]-r),\mathbb C^{2n})/D\cong \Lambda^2 E^*,$$
where for $P=[E],$ 
$$D=E\otimes(E^\perp/E),\quad E^*=\mathbb C^{2n}/E^\perp.$$
Therefore we obtain a reduction of frame by
$$\phi_\a^{~\b}+\phi_\b^{~\a}=0$$ 
and 
$\phi_\a^{~\b}-\phi_\b^{~\a},~\a,\b=1,\ldots,\ell$ span the contact forms. 
}

There are several types of frame changes.

\begin{Def}\label{changes}
{\rm We call a change of frame}
\begin{enumerate}
\item[i)]change of position {\rm if}
$$
\widetilde Z_\alpha=W_\alpha^{~\beta}Z_\beta,\quad
\widetilde Y_\alpha=V_\alpha^{~\beta}Y_\beta,\quad
\widetilde X_j=X_j,
$$
{\rm where $W=(W_\alpha^{~\beta})$ and $V=(V_\alpha^{~\beta})$ are
$\ell\times \ell$ matrices satisfying $\overline{V^t}W=I_\ell$}
 and if $X = OGr_n$ or $LGr_n$, $W$ and $V$ are symmetric or skew-symmetric, respectively;

\item[ii)]change of real vectors {\rm if}
$$
\widetilde Z_\alpha=Z_\alpha,\quad
\widetilde X_j=X_j,\quad
\widetilde Y_\alpha=Y_\alpha+H_\alpha^{~\beta}Z_\beta,
$$
{\rm where $H=(H_\alpha^{~\beta})$ is a Hermitian matrix};

\item[iii)]dilation {\rm if}
$$
\widetilde Z_\alpha=\lambda_{\alpha}^{-1}Z_\alpha,\quad
\widetilde Y_\alpha=\lambda_\alpha Y_\alpha,\quad
\widetilde X_j=X_j,
$$
{\rm where $\lambda_\alpha>0$};

\item[iv)]rotation {\rm if}
$$
\widetilde Z_\alpha=Z_\alpha,\quad
\widetilde Y_\alpha=Y_\alpha,\quad
\widetilde X_j=U_j^{~k}X_k,
$$
{\rm where $(U_j^{~k})$ is an $SU(q-\ell, p-\ell)$ matrix.}
\end{enumerate}
\end{Def}

Change of position in Definition~\ref{changes} sends $\phi$ and $\theta$ to
$$
\widetilde
\phi_\alpha^{~\beta}=W_\alpha^{~\gamma}\phi_\gamma^{~\delta}W^{*}{}_{\delta}^{~\b},
\quad W^{*}{}_{\delta}^{~\b}=\overline{W_{\beta}^{~\delta}},\quad
\widetilde\theta_\alpha^{~j}=W_\alpha^{~\beta}\theta_\beta^{~j}.
$$
Dilation changes $\phi_\a^{~\b}$, $\theta_\a^{~j}$ to
$$
\widetilde
\phi_\alpha^{~\beta}=\frac{1}{\lambda_\a\lambda_\b}\phi_\a^{~\b}
,\quad
\widetilde\theta_\alpha^{~j}=\frac{1}{\lambda_\a}\theta_\a^{~j},
$$
while rotation remains $\phi_\a^{~\b}$ unchanged and changes $\theta_\a^{~j}$ to
$$
\widetilde\theta_\alpha^{~j}=\theta_\a^{~k}U_k^{~j}.
$$

Finally, we will use the change of frame given by
\begin{equation*}\label{last-change}
\widetilde Z_\alpha=Z_\alpha,\quad
\widetilde X_j=X_{j} + C_j^{~\beta}Z_\beta,\quad
\widetilde Y_\alpha=Y_\alpha+A_\alpha^{~\beta}Z_\beta+B_\alpha^{~j}X_j
\end{equation*}
such that
$$C_j^{~\alpha}+B_j^{~\alpha}=0$$
and
$$A_\alpha^{~\beta} + \overline{A_\beta^{~\alpha}}
+B_\alpha^{~j}B_j^{~\beta}=0,$$
where
$$B_j^{~\alpha}:=\3\delta_{jk}\overline{B_\alpha^{~k}}.$$
Then the new frame $(\widetilde Z,\widetilde X,\widetilde Y)$ is an $\Sigma_r(X)$-frame and the related one forms $\widetilde\phi_\alpha^{~\beta}$ remain the same, while $\widetilde\theta_\alpha^{~j}$ change to
$$\widetilde\theta_\alpha^{~j}=\theta_\alpha^{~j}-\phi_\alpha^{~\beta}B_\beta^{~j}.$$

\end{document}